\newcommand{\gammacoeff}{\beta}
\newcommand{\byvar}[1]{\notevar{by #1}}
\newcommand{\sincevar}[1]{\notevar{since #1}}
\newcommand{\notevar}[1]{\;\;\;\;\text{(#1)}}
\newcommand{\pdim}{{\rm d}}
\renewcommand{\dist}{{\rm dist}}
\renewcommand{\mathbb}{\amsbb}
\begin{document}
\title[Self-affine sponges: a dimension gap]{The Hausdorff and dynamical dimensions of self-affine sponges:\\ a dimension gap result}

\authortushar\authordavid

\subjclass[2010]{Primary 37C45, 37C40; Secondary 37D35, 37D20}
\keywords{Hausdorff dimension, dynamical dimension, expanding repellers, iterated function systems, fractals, self-affine sponges, self-affine carpets, Ledrappier--Young formula}

\begin{Abstract}
We construct a self-affine sponge in $\mathbb R^3$ whose dynamical dimension, i.e. the supremum of the Hausdorff dimensions of its invariant measures, is strictly less than its Hausdorff dimension. This resolves a long-standing open problem in the dimension theory of dynamical systems, namely whether every expanding repeller has an ergodic invariant measure of full Hausdorff dimension. More generally we compute the Hausdorff and dynamical dimensions of a large class of self-affine sponges, a problem that previous techniques could only solve in two dimensions. The Hausdorff and dynamical dimensions depend continuously on the iterated function system defining the sponge, implying that sponges with a dimension gap represent a nonempty open subset of the parameter space.
\end{Abstract}
\maketitle

\tableofcontents

\section{Introduction}

A fundamental question in dynamics is to find ``natural'' invariant measures on the phase space of a dynamical system. Such measures afford a window into the dynamical complexity of chaotic systems by allowing one to study the statistical properties of the system via observations of ``typical'' orbits. For example, the knowledge that Gauss measure on $[0,1]$ is ergodic and invariant with respect to the Gauss map allows one to compute the distribution of continued fraction partial quotients of Lebesgue almost every real number \cite[\63.2]{EinsiedlerWard}. In general, ergodic invariant measures that are absolutely continuous to Lebesgue measure are often considered the most physically relevant, since they describe the statistical properties of the forward orbits of a set of points of positive Lebesgue measure.

However, in many cases there are no invariant measures absolutely continuous to Lebesgue measure. In this circumstance, there are other ways of deciding which invariant measure is the most ``natural'' -- for example, Sinai, Ruelle, and Bowen considered a class of invariant measures (now known as SRB measures) that still describe the behavior of forward orbits of points typical with respect to Lebesgue measure, even though these invariant measures are not necessarily absolutely continuous to Lebesgue measure, see e.g. \cite{Young3}. However, there are some disadvantages to this class of measures, for example we may want to consider measures supported on a fractal subset of interest such as a basic set or a repeller, and SRB measures may not be supported on such a fractal.

A complementary approach is to judge how natural a measure is in terms of its Hausdorff dimension. For example, Lebesgue measure has the largest possible Hausdorff dimension of any measure, equal to the Hausdorff dimension of the entire space. If we are looking for measures supported on a fractal subset, it makes sense to look for one whose Hausdorff dimension is equal to the Hausdorff dimension of that set. An ergodic invariant measure with this property can be thought of as capturing the ``typical'' dynamics of points on the fractal. In cases where such a measure is known to exist, it is often unique; see e.g. \cite[Theorem 9.3.1]{PrzytyckiUrbanski} and \cite[Theorem 4.4.7]{MauldinUrbanski2}, where this is proven in the cases of conformal expanding repellers and conformal graph directed Markov systems, respectively.

On the other hand, if the Hausdorff dimension of an invariant measure is strictly less than the Hausdorff dimension of the entire fractal, then the set of typical points for the measure is much smaller than the set of atypical points, and therefore the dynamics of ``most'' points on the fractal are not captured by the measure. Even so, we can ask whether the Hausdorff dimension of the fractal can be approximated by the Hausdorff dimensions of invariant measures, i.e. whether it is equal to the supremum of the Hausdorff dimensions of such measures. We call the latter number the \emph{dynamical dimension} of the system; cf. \cite{DenkerUrbanski2}, \cite[\6\612.2-12.3]{PrzytyckiUrbanski}, though we note that the definition of the dynamical dimension in these references is slightly different from ours.

The question of which dynamical systems have ergodic invariant measures of full Hausdorff dimension has generated substantial interest over the past few decades, see e.g. \cite{Baranski2, Barreira, Feng, FengHu, FriedlandOchs, Hutchinson, Kaenmaki2, KotusUrbanski4, Luzia, MauldinUrbanski1, MauldinUrbanski2, McCluskeyManning, PrzytyckiRivera, Reeve, Urbanski6, Yayama}, as well as the survey articles \cite{BarreiraGelfert, ChenPesin, GatzourasPeres_survey, SchmelingWeiss} and the books \cite{Barreira_book1, Barreira_book2}. Most of the results are positive, proving the existence and uniqueness of a measure of full dimension under appropriate hypotheses on the dynamical system. 

The theory in the case of (compact) expanding systems that are conformal or essentially one-dimensional is, in a sense, the most complete -- the Hausdorff and box dimensions of the repeller coincide, and there exists a unique ergodic invariant full dimension measure. The equality of dimension characteristics as well as the existence of a full dimension measure is a consequence of \emph{Bowen's formula} in the thermodynamic formalism, which equates the Hausdorff dimension of the repeller with the unique zero of a pressure functional, see e.g. \cite[Corollary 9.1.7]{PrzytyckiUrbanski}, \cite{GatzourasPeres2}, or \cite[Theorem 2.1]{Rugh} for an elementary proof. The uniqueness of the full dimension measure follows from the \emph{Volume Lemma}, which describes how to compute the Hausdorff dimension of an arbitrary ergodic invariant measure, see e.g. \cite[Theorems 9.1.11 and 9.3.1]{PrzytyckiUrbanski}. On the other hand, if either of the assumptions of compactness and expansion is dropped, then a full dimension measure may not exist, see \cite{UrbanskiZdunik3} and \cite{AvilaLyubich2} respectively.

Another class of examples for which a great deal of theory has been established is the case of two-dimensional Axiom A diffeomorphisms. Loosely speaking, Axiom A diffeomorphisms are those in which there is a dichotomy between ``expanding'' directions and ``contracting'' directions, see e.g. \cite{Bowen_book} for a beautiful introduction. McCluskey and Manning \cite{McCluskeyManning} showed that ``most'' two-dimensional Axiom A diffeomorphisms have basic sets whose Hausdorff dimension is strictly greater than their dynamical dimension (i.e. the supremal dimension of invariant measures), and in particular there are no invariant measures of full dimension. So in the (topologically) generic case there can be no theory of full dimension measures. There is also a simple sufficient condition (not satisfied generically) for the existence of full dimension measures for two-dimensional Axiom A diffeomorphisms, see \cite[Theorem 1.10]{FriedlandOchs}. This condition is also necessary, at least in the case where the system is topologically conjugate to a topologically mixing shift space, as can be seen by combining \cite[p.99]{BarreiraWolf} with \cite[Theorem 1.28]{Bowen_book}.

Progress beyond these cases, and in particular in the case where the system is expanding but may have different rates of expansion in different directions, has been much slower and of more limited scope, see e.g. \cite{BarreiraGelfert, ChenPesin, GatzourasPeres_survey, SchmelingWeiss}. Such systems, called ``expanding repellers'', form another large and much-studied class of examples. They can be formally defined as follows:

\begin{definition}
An \emph{expanding repeller} is a dynamical system $f:K \to K$, where $K$ is a compact subset of a Riemannian manifold $M$, $U \subset M$ is a neighborhood of $K$, and $f:U\to M$ is a $C^1$ transformation such that
\begin{itemize}
\item $f^{-1}(K) = K$; and
\item for some $n$, $f^n$ is infinitesimally expanding on $K$ with respect to the Riemannian metric.
\end{itemize}
\end{definition}

The following question regarding such systems, stated by Schmeling and Weiss to be ``one of the major open problems in the dimension theory of dynamical systems'' \cite[p.440]{SchmelingWeiss}, dates back to at least the early 1990s and can be found reiterated in several places in the literature by various experts in the field (see {Lalley--Gatzouras (1992) \cite[p.4]{LalleyGatzouras}}, {Kenyon--Peres (1996) \cite[Open Problem]{KenyonPeres}}, {Gatzouras--Peres (1996) \cite[Problem 1]{GatzourasPeres_survey}}, {Gatzouras--Peres (1997) \cite[Conjecture on p.166]{GatzourasPeres2}}, {Peres--Solomyak (2000) \cite[Question 5.1]{PeresSolomyak}}, {Schmeling--Weiss (2001) \cite[p.440]{SchmelingWeiss}}, {Petersen (2002) \cite[p.188]{Petersen2}}, {Chen--Pesin (2010) \cite[p.R108]{ChenPesin}}, {Schmeling (2012) \cite[p.298]{Schmeling}}, {Barreira (2013) \cite[p.5]{Barreira_book2}}): 

\begin{question}
\label{mainquestion}
{\it Does every expanding repeller have an ergodic invariant measure of full dimension?}
\end{question}

In this paper we will prove that the answer to Question \ref{mainquestion} is negative by constructing a piecewise affine expanding repeller topologically conjugate to the full shift whose Hausdorff dimension is strictly greater than its dynamical dimension. This expanding repeller will belong to a class of sets that we call ``self-affine sponges'' (not all of which are expanding repellers), and we develop tools for calculating the Hausdorff and dynamical dimensions of self-affine sponges more generally. This makes our paper an extension of several known results about self-affine sponges \cite{Bedford, McMullen_carpets, LalleyGatzouras, KenyonPeres, Baranski}, though in all previously studied cases, the Hausdorff and dynamical dimensions have turned out to be equal. We also note that self-affine sponges are a subclass of the more general class of self-affine sets, and that it is known that almost every self-affine set (with respect to a certain measure on the space of perturbations of a given self-affine set) has an ergodic invariant measure of full dimension \cite{Kaenmaki2}. However, self-affine sponges do not represent typical instances of self-affine sets and so this result does not contradict our theorems. Nevertheless, we show that our counterexamples represent a non-negligible set of self-affine sponges (in the sense of containing a nonempty open subset of the parameter space); see Theorem \ref{theoremcontinuous}.

Previous approaches to Question \ref{mainquestion} have involved using the thermodynamic formalism to compute the Hausdorff dimension of the repeller and then comparing with the dimensions of the invariant measures calculated using the Volume Lemma or its generalization, the Ledrappier--Young dimension formula \cite[Corollary D$'$]{LedrappierYoung2}. When it works, this strategy generally shows that the Hausdorff and dynamical dimensions of a repeller are equal. By contrast, we still use the Ledrappier--Young formula to calculate the dimension of invariant measures, but our strategy to calculate the dimension of the repeller is to pay more attention to the {\it non-invariant} measures. Indeed, we write the Hausdorff dimension of a self-affine sponge as the supremum of the Hausdorff dimensions of certain particularly nice non-invariant measures that we call ``pseudo-Bernoulli'' measures (see Definition \ref{definitionPB}), which are relatively homogeneous with respect to space, but whose behavior with respect to length scale varies in a periodic way. The dimension of these measures turns out to be calculable via an appropriate analogue of the Ledrappier--Young formula, which is how we show that it is sometimes larger than the dimension of any invariant measure.\\

{\bf Acknowledgements.} The first-named author was supported in part by a 2016-2017 Faculty Research Grant from the University of Wisconsin--La Crosse. The second-named author was supported by the EPSRC Programme Grant EP/J018260/1. The authors thank Antti K\"aenm\"aki for helpful comments. The authors also thank an anonymous referee for a very thorough report, which made a number of useful suggestions and detailed comments to help us improve the precision and readability of the paper. 

\section{Main results}
\label{sectionmain}

\subsection{Qualitative results}
\label{subsectionqualitative}

\begin{definition}
\label{definitionsponges}
Fix $d \geq 1$, and let $D = \{1,\ldots,d\}$. For each $i \in D$, let $A_i$ be a finite index set, and let $\Phi_i = (\phi_{i,a})_{a\in A_i}$ be a finite collection of contracting similarities of $[0,1]$, called the \emph{base IFS in coordinate $i$}. (Here IFS is short for \emph{iterated function system}.) Let $A = \prod_{i\in D} A_i$, and for each $\aa = (a_1,\ldots,a_d) \in A$, consider the contracting affine map $\phi_\aa : [0,1]^d \to [0,1]^d$ defined by the formula
\[
\phi_\aa(x_1,\ldots,x_d) = (\phi_{\aa,1}(x_1),\ldots,\phi_{\aa,d}(x_d)),
\]
where $\phi_{\aa,i}$ is shorthand for $\phi_{i,a_i}$ in the formula above, as well as elsewhere. Geometrically, $\phi_\aa$ can be thought of as corresponding to the rectangle 
\[
\phi_\aa([0,1]^d) = \prod_{i\in D} \phi_{\aa,i}([0,1]) \subset [0,1]^d .
\]
Given $E \subset A$, we call the collection $\Phi \df (\phi_\aa)_{\aa\in E}$ a \emph{diagonal IFS}. The \emph{coding map} of $\Phi$ is the map $\pi:E^\N \to [0,1]^d$ defined by the formula
\[
\pi(\omega) = \lim_{n\to\infty} \phi_{\omega\given n}(\0),
\]
where $\phi_{\omega\given n} \df \phi_{\omega_1}\circ\cdots\circ\phi_{\omega_n}$. Finally, the \emph{limit set} of $\Phi$ is the set $\Lambda_\Phi \df \pi(E^\N)$. We call the limit set of a diagonal IFS a \emph{self-affine sponge}. It is a special case of the more general notion of an \emph{self-affine set}, see e.g. \cite{Falconer4}.
\end{definition}

\begin{remark*}
This definition excludes some sets that it is also natural to call ``sponges'', namely the limit sets of affine iterated function systems whose contractions preserve the class of coordinate-parallel rectangles, see e.g. \cite{Fraser2}. The linear parts of such contractions are matrices that can be written as the composition of a permutation matrix and a diagonal matrix. Self-affine sets resulting from these ``coordinate-permuting IFSes'' are significantly more technical to deal with, so for simplicity we restrict ourselves to the case of sponges coming from diagonal IFSes.
\end{remark*}

When $d = 2$, self-affine sponges are called \emph{self-affine carpets}, and have been studied in detail. Their Hausdorff dimensions were computed by Bedford \cite{Bedford}, McMullen \cite{McMullen_carpets}, Lalley--Gatzouras \cite{LalleyGatzouras}, and Bara\'nski \cite{Baranski}, assuming that various conditions are satisfied. Since we will be interested in the higher-dimensional versions of these conditions, we define them now:

\begin{figure}
\scalebox{0.45}{
\begin{tikzpicture}[line cap=round,line join=rounr]
\clip(-1,-1) rectangle (7,7);
\fill[gray] (0,0) -- (0,3) -- (2,3) -- (2,0) -- cycle;
\fill[gray] (2,3) -- (2,6) -- (4,6) -- (4,3) -- cycle;
\fill[gray] (4,0) -- (4,3) -- (6,3) -- (6,0) -- cycle;
\draw (0,0)-- (0,6);
\draw (2,0)-- (2,6);
\draw (4,0)-- (4,6);
\draw (6,0)-- (6,6);
\draw (0,0)-- (6,0);
\draw (0,3)-- (6,3);
\draw (0,6)-- (6,6);
\end{tikzpicture}}
\scalebox{0.45}{
\begin{tikzpicture}[line cap=round,line join=rounr]
\clip(-1,-1) rectangle (7,7);
\fill[gray] (0,0) -- (0,2) -- (3,2) -- (3,0) -- cycle;
\fill[gray] (1,5) -- (1,6) -- (4,6) -- (4,5) -- cycle;
\fill[gray] (2,3) -- (2,4) -- (6,4) -- (6,3) -- cycle;
\draw (0,0)-- (0,6);
\draw (6,0)-- (6,6);
\draw (0,0)-- (6,0);
\draw (0,6)-- (6,6);
\draw (0,0) -- (0,2) -- (3,2) -- (3,0) -- cycle;
\draw (1,5) -- (1,6) -- (4,6) -- (4,5) -- cycle;
\draw (2,3) -- (2,4) -- (6,4) -- (6,3) -- cycle;
\end{tikzpicture}}
\scalebox{0.45}{
\begin{tikzpicture}[line cap=round,line join=rounr]
\clip(-1,-1) rectangle (7,7);
\fill[gray] (0,0) -- (0,2) -- (3,2) -- (3,0) -- cycle;
\fill[gray] (0,4) -- (0,5.5) -- (3,5.5) -- (3,4) -- cycle;
\fill[gray] (4,1) -- (4,2) -- (6,2) -- (6,1) -- cycle;
\fill[gray] (4,5) -- (4,6) -- (6,6) -- (6,5) -- cycle;
\fill[gray] (3,3) -- (3,3.5) -- (4,3.5) -- (4,3) -- cycle;
\draw (0,0)-- (0,6);
\draw (6,0)-- (6,6);
\draw (0,0)-- (6,0);
\draw (0,6)-- (6,6);
\draw (0,0) -- (0,2) -- (3,2) -- (3,0) -- cycle;
\draw (0,4) -- (0,5.5) -- (3,5.5) -- (3,4) -- cycle;
\draw (4,1) -- (4,2) -- (6,2) -- (6,1) -- cycle;
\draw (4,5) -- (4,6) -- (6,6) -- (6,5) -- cycle;
\draw (3,3) -- (3,3.5) -- (4,3.5) -- (4,3) -- cycle;
\draw (3,0)-- (3,6);
\draw (4,0)-- (4,6);
\end{tikzpicture}}
\scalebox{0.45}{
\begin{tikzpicture}[line cap=round,line join=rounr]
\clip(-1,-1) rectangle (7,7);
\fill[gray] (0,0) -- (0,3) -- (3,3) -- (3,0) -- cycle;
\fill[gray] (3,3) -- (3,5) -- (4,5) -- (4,3) -- cycle;
\fill[gray] (4,0) -- (4,3) -- (6,3) -- (6,0) -- cycle;
\fill[gray] (0,5) -- (0,6) -- (3,6) -- (3,5) -- cycle;
\draw (0,0)-- (0,6);
\draw (3,0)-- (3,6);
\draw (4,0)-- (4,6);
\draw (6,0)-- (6,6);
\draw (0,0)-- (6,0);
\draw (0,3)-- (6,3);
\draw (0,5)-- (6,5);
\draw (0,6)-- (6,6);
\end{tikzpicture}}
\caption{Generating templates for a Sierpi\'nski carpet (left), carpets satisfying the coordinate ordering condition (two middle pictures), and a Bara\'nski carpet (right). Each picture defines a diagonal IFS: each shaded region corresponds to an affine contraction that sends the entire unit square to that shaded region. The right middle picture satisfies an additional disjointness condition which makes it a \emph{Lalley--Gatzouras carpet}; cf. Definition \ref{definitionLG}.}
\label{figureBMLGB}
\end{figure}
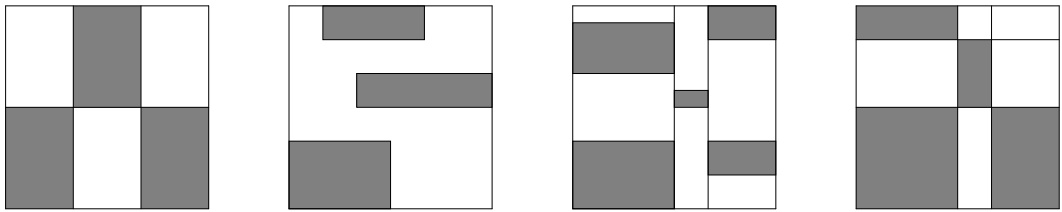
\begin{definition}[Cf. Figure \ref{figureBMLGB}]
\label{definitionBMLGB}
Let $\Lambda_\Phi$ be a self-affine sponge defined by a diagonal IFS $\Phi$.
\begin{itemize}
\item We say that $\Phi$ or $\Lambda_\Phi$ is \emph{Sierpi\'nski} if the base IFSes are of the form
\begin{align*}
\Phi_i &= (\phi_{i,a})_{0 \leq a \leq m_i - 1},&
\phi_{i,a}(x) &= \frac{a + x}{m_i}
\end{align*}
for some distinct integers $m_1,\ldots,m_d \geq 2$.
\item We say that $\Phi$ or $\Lambda_\Phi$ satisfies the \emph{coordinate ordering condition} if there exists a permutation $\sigma$ of $D$ such that for all $\aa\in E$, we have
\[
|\phi_{\aa,\sigma(1)}'| > \cdots > |\phi_{\aa,\sigma(d)}'|.
\]
\item We say that $\Phi$ or $\Lambda_\Phi$ is \emph{Bara\'nski} (resp. \emph{strongly Bara\'nski}) if the base IFSes all satisfy the open set condition (resp. the strong separation condition) with respect to the interval $\I = (0,1)$ (resp. $\I = [0,1]$), i.e. for all $i\in D$, the collection
\[
\big(\phi_{i,a}(\I)\big)_{a\in A_i}
\]
is disjoint.
\end{itemize}
\end{definition}
Notice that every Sierpi\'nski sponge satisfies the coordinate ordering condition and is also Bara\'nski. 
Bedford \cite{Bedford} and McMullen \cite{McMullen_carpets} independently computed the Hausdorff dimension of Sierpi\'nski carpets, and consequently these carpets are sometimes known as \emph{Bedford--McMullen carpets}. Bara\'nski computed the Hausdorff dimension of what we call Bara\'nski carpets \cite{Baranski}.\Footnote{Read literally, the setup of \cite{Baranski} implies that the maps $\phi_{i,a}$ ($i\in D$, $a\in A_i$) are orientation-preserving, but there is no significant difference in dealing with the case where reflections are allowed.}  On the other hand, the coordinate ordering condition, which can be thought of as guaranteeing a ``clear separation of Lyapunov directions'', cf. \cite[p.643]{BarreiraGelfert}, is a higher-dimensional generalization of one of the assumptions of Lalley--Gatzouras \cite{LalleyGatzouras}. Their other assumption is a disjointness condition \cite[p.534]{LalleyGatzouras} that is slightly weaker than the Bara\'nski condition. The higher-dimensional analogue of the disjointness condition is somewhat technical to state, so we defer its definition until Section \ref{sectionweaker}.

\begin{observation}
\label{observationSSC}
Let $\Lambda_\Phi$ be a strongly Bara\'nski sponge. Then the coding map $\pi:E^\N \to \Lambda_\Phi$ is a homeomorphism. It follows that there is a unique map $f:\Lambda_\Phi \to \Lambda_\Phi$ such that $f\circ \pi = \pi \circ \sigma$, where $\sigma:E^\N \to E^\N$ is the shift map. In fact, the dynamical system $f:\Lambda_\Phi \to \Lambda_\Phi$ is a piecewise affine expanding repeller: for all $\aa\in E$, we have $f = \phi_\aa^{-1}$ on $\phi_\aa(\Lambda_\Phi)$.
\end{observation}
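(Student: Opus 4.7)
The plan is to verify the three claims in sequence: injectivity of $\pi$, which is the only nontrivial ingredient; a compactness argument that upgrades this to a homeomorphism; and then construction and analysis of the conjugate map $f$.

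First I would establish injectivity. Continuity of $\pi$ is standard, since the diameter of $\phi_{\omega\given n}([0,1]^d)$ tends to zero uniformly in $\omega$ by exponential decay from the contraction ratios. For injectivity, suppose $\omega \neq \omega' \in E^\N$, and let $n$ be the first index where they disagree. Write $\aa = \omega_n$ and $\aa' = \omega_n'$, and choose $i \in D$ with $a_i \neq a_i'$. The strong separation condition on $\Phi_i$ then gives that $\phi_{i,a_i}([0,1])$ and $\phi_{i,a_i'}([0,1])$ are disjoint closed intervals. Since $\pi(\sigma^{n-1}\omega)$ and $\pi(\sigma^{n-1}\omega')$ lie in $\phi_\aa([0,1]^d)$ and $\phi_{\aa'}([0,1]^d)$ respectively, their $i$-th coordinates fall in disjoint intervals and are therefore unequal. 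Applying the injective affine map $\phi_{\omega\given n-1}$ preserves this distinction, so $\pi(\omega) \neq \pi(\omega')$. Because $E^\N$ is compact and $[0,1]^d$ is Hausdorff, the continuous bijection $\pi: E^\N \to \Lambda_\Phi$ is automatically a homeomorphism onto its image.

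Next I would define $f \df \pi \circ \sigma \circ \pi^{-1}$; the conjugacy relation $f \circ \pi = \pi \circ \sigma$ then holds by construction, and surjectivity of $\pi$ forces any map satisfying this relation to coincide with $f$, giving uniqueness. To verify the piecewise affine repeller structure, I take $x \in \phi_\aa(\Lambda_\Phi)$ and write $x = \phi_\aa(\pi(\omega)) = \pi(\aa\omega)$ for some $\omega \in E^\N$; then $f(x) = \pi(\sigma(\aa\omega)) = \pi(\omega) = \phi_\aa^{-1}(x)$, giving the asserted formula. The coordinatewise strong separation hypothesis also implies that the rectangles $\phi_\aa([0,1]^d)$ for $\aa \in E$ are pairwise disjoint (any two distinct elements of $E$ differ in some coordinate $i$, where the $i$-th projections are already disjoint closed intervals), so the various $\phi_\aa^{-1}$ patch together to a well-defined piecewise affine map on disjoint open neighborhoods $U_\aa \supset \phi_\aa([0,1]^d)$ contained in a neighborhood $U$ of $\Lambda_\Phi$. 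Since each $\phi_\aa$ is a diagonal affine contraction with all diagonal entries of absolute value strictly less than $1$, its inverse has all diagonal entries of absolute value strictly greater than $1$, so $f$ is infinitesimally expanding on $\Lambda_\Phi$, completing the verification.

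There is no serious obstacle here: the statement is standard IFS bookkeeping once strong separation is unpacked. The only point that warrants care is the translation from the strong separation condition, which is stated per coordinate on the base IFSes, into honest disjointness of the higher-dimensional rectangles $\phi_\aa([0,1]^d)$; disjointness of the projections in a single coordinate is enough to conclude this.
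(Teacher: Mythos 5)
Your argument is correct, and since the paper states this as an Observation without proof, your write-up supplies exactly the routine verification the authors had in mind: strong separation in a single coordinate forces injectivity of $\pi$, compactness upgrades the continuous bijection to a homeomorphism, and the conjugate map is computed directly to be $\phi_\aa^{-1}$ on each piece. The only item you leave implicit is the condition $f^{-1}(\Lambda_\Phi)=\Lambda_\Phi$ from the definition of an expanding repeller, but with $f$ defined as $\phi_\aa^{-1}$ on disjoint neighborhoods $U_\aa$ this follows immediately from $f^{-1}(\Lambda_\Phi)=\bigcup_{\aa\in E}\phi_\aa(\Lambda_\Phi)=\Lambda_\Phi$.
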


In \cite{Bedford,McMullen_carpets,LalleyGatzouras,Baranski}, a relation was established between the Hausdorff dimension of a self-affine carpet $\Lambda_\Phi$ and the Hausdorff dimension of the Bernoulli measures on $\Lambda_\Phi$. Here, a \emph{Bernoulli measure} is a measure of the form
\[
\nu_\pp = \pi_*[\pp^\N],
\]
where $\pp$ is a probability measure on $E$, and $\pi_*[\mu]$ denotes the pushforward of a measure $\mu$ under the coding map $\pi$. In what follows, we let $\PP$ denote the space of probability measures on $E$.

\begin{theorem}[\cite{Baranski}, special cases \cite{Bedford,McMullen_carpets,LalleyGatzouras}]
\label{theoremLGB}
Let $\Lambda_\Phi$ be a Bara\'nski carpet (i.e. a two-dimensional Bara\'nski sponge). Then the Hausdorff dimension of $\Lambda_\Phi$ is equal to the supremum of the Hausdorff dimensions of the Bernoulli measures on $\Lambda_\Phi$, i.e.
\begin{equation}
\label{equality1}
\HD(\Phi) = \sup_{\pp\in\PP} \HD(\nu_\pp),
\end{equation}
where $\HD(\Phi)$ denotes the Hausdorff dimension of $\Lambda_\Phi$.
\end{theorem}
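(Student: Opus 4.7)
The inequality $\sup_{\pp \in \PP} \HD(\nu_\pp) \leq \HD(\Phi)$ is immediate, since every Bernoulli measure $\nu_\pp$ is supported on $\Lambda_\Phi$. The task is therefore to produce Bernoulli measures whose Hausdorff dimensions approach $\HD(\Phi)$, and my plan follows the two-step template familiar from the classical treatments of self-affine carpets.

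The first step is to derive a closed-form expression for $\HD(\nu_\pp)$ in terms of the entropy of $\pp$, its marginals, and the Lyapunov exponents $\chi_i(\pp) = -\sum_{\aa \in E} p_\aa \log|\phi'_{\aa,i}|$ for $i=1,2$. The Bara\'nski open set condition on each base IFS guarantees that the coordinate-wise image of $\nu_\pp$ is a self-similar Bernoulli measure on $[0,1]$ whose dimension is the standard ratio $H_i(\pp)/\chi_i(\pp)$, where $H_i(\pp)$ is the entropy of the marginal of $\pp$ on $A_i$. A Ledrappier--Young type computation, working through the ``slow'' coordinate first and then the ``fast'' one, then yields a piecewise-linear formula for $\HD(\nu_\pp)$. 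Because in the Bara\'nski setting different symbols $\aa \in E$ may have different orderings of $|\phi'_{\aa,1}|$ and $|\phi'_{\aa,2}|$, the derivation must split $E$ according to which coordinate is more strongly contracted, and track both orderings separately.

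The second step is to identify the supremum over $\pp$ of this formula with $\HD(\Phi)$. I would cover $\Lambda_\Phi$ by \emph{approximate squares}: for a target scale $r$, these are sets of the form $\phi_{\omega\given n}(\Lambda_\Phi) \cap R$, where $n$ is the first index at which the more strongly contracted coordinate of $\phi_{\omega\given n}$ drops below $r$, and $R$ is a strip of width $r$ in the other coordinate. Counting these squares at scale $r$ gives the upper box dimension of $\Lambda_\Phi$, and Stirling's approximation converts the count into a constrained optimization over empirical frequencies of the symbols $\aa \in E$. The optimum is attained by a product distribution, i.e.\ by a Bernoulli measure, and its exponential growth rate matches the formula from step one. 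Combining with the elementary bound $\HD(\Phi) \leq \overline{\dim}_B(\Lambda_\Phi)$ gives the reverse inequality.

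The principal obstacle is the combinatorial analysis in the second step: approximate squares at a given scale do not correspond to cylinders of a single fixed length in $E^\N$, so one must keep track of two distinct stopping times (one per coordinate) and verify that the associated counting function factors cleanly through a Bernoulli-measure formula. In two dimensions there are only two stopping times and hence only two orderings to consider, and an exchange argument confirms that concatenating approximate squares with prescribed empirical frequencies is both covering-efficient and realizable by a Bernoulli measure. This clean factorization is precisely what breaks down in higher dimensions, where the growing number of orderings of $d$ stopping times motivates the refined pseudo-Bernoulli constructions introduced later in the paper.
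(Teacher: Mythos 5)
Your first step (a Ledrappier--Young formula for $\HD(\nu_\pp)$) and the easy inequality are fine, but the second step contains a genuine error: the chain $\HD(\Phi)\leq\overline{\dim}_B(\Lambda_\Phi)=\text{(optimal exponential count of approximate squares)}=\sup_{\pp}\HD(\nu_\pp)$ cannot work, because for a typical Bara\'nski carpet the upper box dimension is \emph{strictly larger} than the Hausdorff dimension (and hence than $\sup_\pp\HD(\nu_\pp)$). Already for a Bedford--McMullen carpet whose nonempty rows contain different numbers of selected cells one has $\HD(\Phi)=\log_m\bigl(\sum_j t_j^{\log_n m}\bigr)<\log_m R+\log_n(N/R)=\dim_B(\Lambda_\Phi)$; this is the phenomenon recalled in Remark \ref{remark2D3D}. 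Counting approximate squares at a single scale $r$ can therefore only ever produce the box dimension. The correct upper bound for $\HD$ must exploit covers whose elements have widely varying diameters, or equivalently (as in this paper) a Rogers--Taylor/Billingsley argument: one exhibits, for \emph{every} $\omega\in E^\N$, an auxiliary measure whose lower pointwise dimension at $\pi(\omega)$ is at most the target value, and then invokes Theorem \ref{theoremRT}. Since the empirical distributions $\what\bfP_N$ of $\omega$ need not converge, the auxiliary measure cannot in general be taken Bernoulli -- this is exactly why the paper passes through pseudo-Bernoulli measures and Claim \ref{claimexistsr} to obtain $\HD(\Phi)\leq\sup_{\rr\in\RR}\delta(\rr)$.

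The second, related gap is the assertion that ``the optimum is attained by a product distribution.'' After the covering argument is repaired, what one actually gets is an infimum over scales of a scale-dependent quantity $\delta(\rr,B)$ built from oscillating empirical frequencies, and the claim that this is dominated by $\sup_\pp\delta(\pp)$ is precisely the nontrivial two-dimensional statement. Your ``exchange argument'' is where all the work lives, and it is not supplied. The paper's proof of this step (Theorem \ref{theoremDGbound}) is short but genuinely uses $d=2$: writing $\delta(\rr)\leq\inf_B\sum_{i}f_i(B_i)$ and $\DynD(\Phi)\geq\sum_i f_i(B)$ for each fixed $B$, one chooses $B$ so that $f_1(B_1)$ nearly minimizes $f_1$, whence $f_1(B_1)+f_2(B_2)\leq f_1(B_2)+f_2(B_2)+\epsilon\leq\DynD(\Phi)+\epsilon$. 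With three or more coordinates the mismatched scales $B_1,\ldots,B_d$ can no longer be reconciled, which is the source of the dimension gap in Theorem \ref{theoremcounterexample}. So your intuition about where two dimensions is special is correct, but as written the proposal neither gives a valid upper bound for $\HD(\Phi)$ nor proves the reduction from oscillating frequencies to a single Bernoulli measure.
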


It is natural to ask whether Theorem \ref{theoremLGB} can be generalized to higher dimensions. This question was answered by Kenyon and Peres \cite{KenyonPeres} in the case of Sierpi\'nski sponges:

\begin{theorem}[{\cite[Theorem 1.2]{KenyonPeres}}, special cases \cite{Bedford,McMullen_carpets}]
\label{theoremBMhigher}
The formula \eqref{equality1} holds for Sierpi\'nski sponges (in all dimensions).
\end{theorem}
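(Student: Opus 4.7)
The plan is to establish both inequalities in $\HD(\Phi)=\sup_\pp \HD(\nu_\pp)$. The bound $\sup_\pp \HD(\nu_\pp)\le\HD(\Phi)$ is automatic since each $\nu_\pp$ is supported on $\Lambda_\Phi$, so the real content is the reverse inequality. Assume without loss of generality that $m_1<m_2<\cdots<m_d$, so the coordinate ordering condition holds with $\sigma$ equal to the identity.

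First I would compute $\HD(\nu_\pp)$ for an arbitrary $\pp\in\PP$ via a local-dimension argument. Because the IFS is Sierpi\'nski, a cylinder $\phi_\omega([0,1]^d)$ with $\omega\in E^n$ is an axis-parallel rectangle of side lengths $m_i^{-n}$ in direction $i$. To probe $\nu_\pp$ at scale $r$, set $n_i(r)=\lfloor \log(1/r)/\log m_i\rfloor$, so that $n_1(r)\geq\cdots\geq n_d(r)$ and $r\asymp m_i^{-n_i(r)}$ for every $i$. The natural objects at this scale are \emph{approximate cubes}: equivalence classes of words $\omega\in E^{n_1(r)}$ under the relation that identifies $\omega$ and $\omega'$ whenever their coordinate-$i$ projections agree through depth $n_i(r)$, for every $i$. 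Each such cube has diameter comparable to $r$, and its $\nu_\pp$-mass factors, by the Bernoulli property, into a product over time steps $k=1,\ldots,n_1(r)$ of marginals of $\pp$ onto the first $j(k)$ coordinates, where $j(k)$ is the largest $i$ with $n_i(r)\ge k$. Applying Shannon-McMillan-Breiman to each projected Bernoulli measure, then rearranging by Abel summation, yields
\[
\HD(\nu_\pp) \;=\; \sum_{i=1}^d \frac{H_i(\pp)-H_{i-1}(\pp)}{\log m_i},
\]
where $H_i(\pp)$ is the entropy of the pushforward of $\pp$ under the projection $E\to A_1\times\cdots\times A_i$ and $H_0(\pp)=0$.

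Next I would match the supremum of this functional with $\HD(\Phi)$. The $r$-level approximate cubes form a near-optimal cover of $\Lambda_\Phi$, so the box dimension, and a fortiori $\HD(\Phi)$, is bounded above by the exponential growth rate of their number as $r\to 0$. A direct combinatorial count over the $d$ nested projections, together with Lagrange multipliers applied stage by stage from coordinate $d$ down to coordinate $1$, shows that this growth rate equals $\sup_{\pp\in\PP}$ of the displayed formula. The optimal $\pp^\ast$ weights each symbol at each projection level in proportion to a Sierpi\'nski-conformal power of the number of its continuations, generalizing McMullen's $t_i^{\log n/\log m}$ weights from the planar case.

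The main obstacle compared with $d=2$ is the inductive bookkeeping across the $d-1$ nested projections: one must simultaneously verify that the approximate-cube counts obey the right recursion at each stage, that the extremizing $\pp^\ast$ is compatible with this recursion, and that approximate cubes at scale $r$ are genuinely comparable (up to bounded overlap) to Euclidean $r$-balls. The distinctness hypothesis $m_i\neq m_j$ enters essentially here: it guarantees $m_i^{-n_i(r)}/r\in[c,C]$ with uniform constants, so the Bernoulli mass of an approximate cube faithfully encodes the local $\nu_\pp$-measure of a genuine ball.
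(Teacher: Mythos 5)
Your first half—computing $\HD(\nu_\pp)$ via approximate cubes and the law of large numbers, arriving at $\sum_i (H_i(\pp)-H_{i-1}(\pp))/\log m_i$—is correct and agrees with the paper's formula \eqref{deltap2}. The gap is in the upper bound. Counting all approximate cubes at a single scale $r$ gives the \emph{box} dimension of $\Lambda_\Phi$, not $\sup_\pp\HD(\nu_\pp)$. Concretely, with $m_1<\cdots<m_d$ and $n_i(r)=\lfloor\log(1/r)/\log m_i\rfloor$, the number of approximate cubes meeting $\Lambda_\Phi$ is exactly $\prod_{i\in D}\#\big(\pi_{I_{\leq i}}(E)\big)^{n_i(r)-n_{i+1}(r)}$ (with $n_{d+1}=0$), whose growth exponent is $\sum_i\big(\log\#\pi_{I_{\leq i}}(E)-\log\#\pi_{I_{\leq i-1}}(E)\big)/\log m_i$; this is your functional with every projected entropy $H_i(\pp)$ replaced by its maximum possible value $\log\#\pi_{I_{\leq i}}(E)$. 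Since these entropies cannot in general be maximized by a single $\pp$ simultaneously, this count strictly exceeds $\sup_\pp\delta(\pp)$ — already for $d=2$, a Bedford--McMullen carpet whose nonempty columns have unequal sizes has box dimension strictly larger than Hausdorff dimension. So the assertion that ``this growth rate equals $\sup_{\pp}$ of the displayed formula'' is false, and no stagewise Lagrange-multiplier computation can repair a fixed-scale count that is simply too large.

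What the upper bound actually requires is either a cover by approximate cubes of \emph{point-dependent generations} (McMullen, Kenyon--Peres), or, as in this paper, the Rogers--Taylor bound $\HD(\Lambda_\Phi)\leq\sup_{x\in\Lambda_\Phi}\underline\pdim(x,\mu)$ applied with a measure $\mu$ allowed to depend on the point: for $x=\pi(\omega)$ whose empirical distributions do not converge, one must select scales $B$ at which the depth-$n_i$ empirical distributions are simultaneously favorable. The reason this succeeds for Sierpi\'nski sponges in every dimension (and fails for general sponges when $d\geq3$) is that here $n_i(r)=\log(1/r)/\log m_i$ are all proportional to $\log(1/r)$, so the substitution $A=B/\log m_i$ preserves the measure $\dee B/B$; averaging $\delta(\rr,B)$ over a multiplicative period then dominates it by an average of Bernoulli dimensions $\delta(\what\bfR_A)$. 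This is precisely the change of variables in the paper's proof of Theorem \ref{theoremBMhigher2}, and it is the entire content of the theorem; your write-up contains no trace of this scale-selection step. (A minor point: the distinctness of the $m_i$ is not what makes approximate cubes comparable to balls — $m_i^{-n_i(r)}/r\in[1,m_i]$ holds by definition of the floor function; distinctness is simply part of the definition of a Sierpi\'nski sponge and plays no essential role in the proof.)
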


These results might lead one to conjecture that the formula \eqref{equality1} holds for all Bara\'nski sponges, or at least all Bara\'nski sponges satisfying the coordinate ordering condition. If that fails, one might still conjecture that the Hausdorff dimension of a Bara\'nski sponge is attained by some ergodic invariant measure, even if that measure is not a Bernoulli measure. For example, Neunh\"auserer showed that the formula \eqref{equality1} fails for a certain class of non-Bara\'nski self-affine carpets \cite[Theorem 2.2]{Neunhauserer}, but later it was shown that these carpets do in fact have ergodic invariant measures of full dimension \cite[Theorem 2.15]{FengHu}. Similar examples appear in the realms of conformal iterated function systems satisfying the open set condition \cite{Hutchinson,MauldinUrbanski1,MauldinUrbanski2}, affine iterated function systems with randomized translational parts \cite{Barreira,Kaenmaki2}, and certain non-conformal non-affine iterated function systems \cite{Reeve}, though in these settings, it was not expected that the measure of full dimension would be a Bernoulli measure. This leads to the following definition:

\begin{definition}
\label{definitionDD}
The \emph{dynamical dimension} of a self-affine sponge $\Lambda_\Phi$ is the number
\[
\DynD(\Phi) \df \sup_\mu\{\HD(\pi_*[\mu])\},
\]
where the supremum is taken over all probability measures $\mu$ on $E^\N$ that are invariant under the shift map.
\end{definition}

It turns out that this definition does not help at getting larger dimensions:

\begin{theorem}
\label{theoremDDbernoulli}
The dynamical dimension of a Bara\'nski sponge $\Lambda_\Phi$ is equal to the supremum of the Hausdorff dimensions of its Bernoulli measures, i.e.
\begin{equation}
\label{DDbernoulli}
\DynD(\Phi) = \sup_{\pp\in\PP} \HD(\nu_\pp).
\end{equation}
\end{theorem}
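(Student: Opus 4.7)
The easy inequality $\DynD(\Phi) \geq \sup_{\pp \in \PP} \HD(\nu_\pp)$ is immediate, since each Bernoulli measure $\pp^\N$ on $E^\N$ is shift-invariant. Attention therefore focuses on the reverse direction: for any shift-invariant probability measure $\mu$ on $E^\N$, the plan is to exhibit a Bernoulli measure $\nu_\pp$ with $\HD(\nu_\pp) \geq \HD(\pi_*[\mu])$. By the ergodic decomposition together with the fact that the Hausdorff dimension of a measure is the essential supremum of the dimensions of its ergodic components, it suffices to treat the case where $\mu$ is ergodic.

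The key tool is the Ledrappier--Young formula for $\HD(\pi_*[\mu])$, which for a diagonal IFS takes a particularly clean form. The Lyapunov exponents of $\mu$ in coordinate $i\in D$ are
\[
\chi_i(\mu) = -\sum_{\aa \in E} p_\aa \log |\phi'_{\aa,i}|,
\]
where $\pp = (p_\aa)_{\aa\in E}$ is the one-step marginal of $\mu$; in particular they depend only on $\pp$. Let $\tau$ be a permutation of $D$ with $\chi_{\tau(1)}(\mu) \leq \cdots \leq \chi_{\tau(d)}(\mu)$. Under the Bara\'nski assumption the dimension formula (developed earlier in the paper) reads
\[
\HD(\pi_*[\mu]) = \sum_{k=1}^d \frac{h_k(\mu) - h_{k-1}(\mu)}{\chi_{\tau(k)}(\mu)},
\]
where $h_k(\mu)$ denotes the entropy rate of the factor obtained by projecting $\mu$ onto the coordinates $\{\tau(1),\dots,\tau(k)\}$, with the convention $h_0(\mu) = 0$ and $h_d(\mu) = h(\mu)$.

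Now I set $\pp$ equal to the one-step marginal of $\mu$. Then $\nu_\pp$ has the same Lyapunov exponents as $\mu$, so the same permutation $\tau$ orders them and the same formula applies to $\nu_\pp$. Because $\nu_\pp$ is a Bernoulli process its projection to any coordinate subset $I\subset D$ is again Bernoulli, so the entropy rate of the projection equals the Shannon entropy of its one-step distribution, which is a well-known upper bound on the corresponding entropy rate for the invariant measure $\mu$. Hence $h_k(\nu_\pp) \geq h_k(\mu)$ for every $k$. Since $1/\chi_{\tau(k)}$ is non-increasing in $k$, Abel summation yields
\[
\sum_{k=1}^d \frac{h_k - h_{k-1}}{\chi_{\tau(k)}} = \sum_{k=1}^{d-1} h_k \left(\frac{1}{\chi_{\tau(k)}} - \frac{1}{\chi_{\tau(k+1)}}\right) + \frac{h_d}{\chi_{\tau(d)}},
\]
expressing both $\HD(\pi_*[\mu])$ and $\HD(\nu_\pp)$ as sums of entropies $h_k$ with \emph{non-negative} weights that depend only on $\pp$. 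Replacing each $h_k(\mu)$ by the larger $h_k(\nu_\pp)$ can therefore only increase the sum, giving $\HD(\nu_\pp) \geq \HD(\pi_*[\mu])$ as required.

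The main obstacle is not the Abel summation step, which is elementary, but verifying the precise form of the Ledrappier--Young dimension formula in the Bara\'nski setting. Without the coordinate ordering condition the ordering permutation $\tau$ depends on $\mu$, and one must confirm that the clean formula above (rather than a more complicated expression, for instance a maximum over permutations of $D$) is the correct one, and furthermore that the entropy-rate comparison $h_k(\mu) \leq h_k(\nu_\pp)$ is valid for the specific factor systems appearing here. I would expect these analytical inputs to be developed in the sections of the paper devoted to the dimension theory of invariant measures on Bara\'nski sponges, after which the present theorem falls out as a direct consequence of the summation-by-parts argument above.
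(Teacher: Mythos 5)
Your easy direction and the final Abel-summation/concavity step are fine, but the argument hinges entirely on an input that is neither proven in the paper nor safely citable: a Ledrappier--Young dimension formula (even just the upper-bound half) for the pushforward $\pi_*[\mu]$ of an \emph{arbitrary} ergodic shift-invariant measure $\mu$, expressed in terms of the entropy rates $h_k(\mu)$ of the coordinate-projected factor processes. The paper only establishes such a formula for Bernoulli measures (Proposition \ref{propositionXinc}, equation \eqref{deltap2}, via Theorem \ref{theoremgood1}) and for the non-invariant pseudo-Bernoulli measures; the remark following Proposition \ref{propositionXinc} explicitly warns that importing the general Ledrappier--Young formula for endomorphisms into this setting runs into formal difficulties (it is not clear the repeller embeds into a global expanding endomorphism of a compact manifold). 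So the step you yourself flag as "the main obstacle" is a genuine gap: the analytic input you expect to find "developed in the sections of the paper devoted to the dimension theory of invariant measures" is not there, and proving it for general ergodic $\mu$ (conditional Shannon--McMillan--Breiman for the non-Markov factor processes, plus a Rogers--Taylor estimate on approximate cubes) is a substantive piece of work comparable to the proof you are trying to replace. A secondary, smaller issue is the reduction to ergodic $\mu$: with the paper's convention $\HD(\mu)=\esssup_\xx\underline\pdim(\xx,\mu)$, passing the bound through the ergodic decomposition requires a measurable-selection argument that you should at least acknowledge.

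For contrast, the paper's proof of the hard inequality $\DynD(\Phi)\leq\sup_{\pp\in\PP}\delta(\pp)$ avoids any dimension formula for general invariant measures. It fixes a countable dense family $\QQ_1\subset\PP$, uses the Rogers--Taylor theorem to bound $\HD$ of the sets $S_{\pp,\epsilon}$ of points where the \emph{Bernoulli} measure $\nu_\pp$ has lower pointwise dimension at most $\delta(\pp)+\epsilon$, and then shows that every $\omega$ whose empirical measures $\what\bfP_N$ converge satisfies $\underline\pdim(\pi(\omega),\nu_\pp)\leq(1+o(1))\delta(\pp)$ for a suitable $\pp\in\QQ_1$ approximating the limit of the $\what\bfP_N$. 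Since the ergodic theorem gives every invariant measure full mass on the set of such $\omega$, one gets $\DynD(\Phi)\leq\HD(S_2)\leq\sup_{\pp}\delta(\pp)$ by $\sigma$-stability of Hausdorff dimension. In other words, the paper bounds the dimension of the \emph{set} carrying all invariant measures using only pointwise-dimension estimates for Bernoulli measures, which is exactly how it sidesteps the Ledrappier--Young formula your proposal relies on. If you want to salvage your route, you would need to prove the upper bound $\HD(\pi_*[\mu])\leq\sum_k(h_k(\mu)-h_{k-1}(\mu))/\chi_{\tau(k)}(\mu)$ for ergodic $\mu$ from scratch; as written, the proposal defers its hardest step to a result the paper deliberately does not use.
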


The question remains whether the dynamical dimension is equal to the Hausdorff dimension of $\Lambda_\Phi$. It follows directly from the definition that
\[
\HD(\Phi) \geq \DynD(\Phi).
\]
The main result of this paper is that this inequality is sometimes strict:

\begin{theorem}[Existence of sponges with a dimension gap]
\label{theoremcounterexample}
For all $d \geq 3$, there exists a strongly Bara\'nski sponge $\Lambda_\Phi \subset [0,1]^d$ satisfying the coordinate ordering condition such that
\[
\HD(\Phi) > \DynD(\Phi).
\]
\end{theorem}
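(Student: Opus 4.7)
We first reduce the general $d\geq 3$ case to $d=3$ by forming the product of a 3D example with a trivial Bara\'nski IFS in the extra coordinates, chosen with contraction ratios that preserve the coordinate ordering and strong separation; under this product the Hausdorff and dynamical dimensions each increase by the same amount, so a 3D strict gap lifts to all higher $d$. By Theorem~\ref{theoremDDbernoulli} it then suffices to exhibit a strongly Bara\'nski sponge $\Lambda_\Phi\subset[0,1]^3$ satisfying the coordinate ordering condition for which
\[
\HD(\Phi) \;>\; \sup_{\pp\in\PP}\HD(\nu_\pp).
\]

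The main new ingredient is a pseudo-Bernoulli variational principle (Definition~\ref{definitionPB}) expressing $\HD(\Phi)$ as the supremum of $\HD(\pi_*\mu)$ over measures $\mu$ determined by a periodic sequence $(\pp_1,\dots,\pp_k)$ of probability vectors on $E$. The dimension of such a measure is given by a period-averaged Ledrappier--Young formula in which the projected Shannon entropies and Lyapunov exponents appearing in the single-vector formula for $\HD(\nu_\pp)$ are replaced by arithmetic means over $j=1,\ldots,k$. Our task therefore reduces to exhibiting a 3D example in which the averaged functional, optimized over $k\geq 2$, strictly exceeds its restriction to the diagonal $\pp_1=\cdots=\pp_k$. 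To force this, we will choose the digit set $E\subset A_1\times A_2\times A_3$ and the base contraction ratios so as to engineer a ``tug-of-war'' in which no single Bernoulli vector $\pp^*$ can simultaneously optimize the marginal contribution in the fastest-contracting coordinate and the conditional contributions in the slower coordinates; a period-$2$ alternation between perturbations $\pp^*\pm t\mathbf v$ can then serve each demand on its own scale while still averaging to $\pp^*$, and a second-order expansion of the averaged functional in $t$ is expected to yield a strict increase for a suitable direction $\mathbf v$. This convexity-breaking mechanism is ruled out when $d=2$ by Theorem~\ref{theoremLGB}, so the construction genuinely uses $d\geq 3$.

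The principal obstacle is this last step: producing an explicit $E$ for which the Hessian of the averaged Ledrappier--Young functional at the diagonal has a strictly positive direction while simultaneously satisfying the strong Bara\'nski and coordinate-ordering conditions. This has been open for decades precisely because every previously analyzed example admitted a Bernoulli measure of full dimension; the decisive insight is that the pseudo-Bernoulli variational principle reduces the search for a ``hidden'' non-invariant maximizer to a finite-dimensional perturbation calculation, at which point the strict gap can be verified by direct inspection of a carefully tuned IFS.
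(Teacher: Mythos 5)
Your reduction to $d=3$ and the appeal to Theorem \ref{theoremDDbernoulli} match the paper, but the central mechanism you propose cannot work, for a reason that is itself a one-line lemma. You describe pseudo-Bernoulli measures as products built from an \emph{additively} periodic sequence $(\pp_1,\dots,\pp_k)$, with dimension given by arithmetic means of entropies and Lyapunov exponents over the period. Any such measure $\mu=\prod_n \pp_{(n\bmod k)+1}$ satisfies $\mu\leq k\nu$, where $\nu=\frac1k\sum_{j=0}^{k-1}\sigma^j_*\mu$ is shift-invariant; hence $\HD(\pi_*[\mu])\leq\HD(\pi_*[\nu])\leq\DynD(\Phi)$. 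So no additively periodic alternation -- in particular no period-$2$ alternation $\pp^*\pm t\mathbf{v}$ -- can ever exceed the dynamical dimension, and the same objection applies to any ``arithmetic-mean'' dimension formula, since the averaged entropies and exponents are those of an invariant measure. The measures that do the work in the paper are \emph{exponentially} periodic, $\rr_{\lambda b}=\rr_b$ (Definition \ref{definitionPB}): the weight used at position $n$ depends on $\log n$, so the block of positions carrying a given weight grows geometrically, which is exactly what defeats the Ces\`aro averaging above. Correspondingly, the dimension formula (Theorem \ref{theoremPBcompute}) is not an average but an infimum over scales $B$ of a quantity $\delta(\rr,B)$ in which each coordinate is integrated up to its own scale-dependent cutoff $B_i$.

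Even after replacing additive by exponential periodicity, the ``alternation along one direction $\mathbf{v}$'' idea still fails. In the paper's $\gamma\to0$ analysis the gap is governed by $\inf_t\sum_{i}\bfK_i\cdot\bfZ(t_{i,0})$, where $\bfZ$ is the mean-zero periodic antiderivative of the cycle's deviation from the maximizer; if the cycle is confined to a line through the maximizer, $\bfZ$ takes values in a one-dimensional space and, having mean zero, vanishes at some $t$, forcing this infimum to be $\leq 0$ (this is the paper's remark explaining why at least three ``types'' of sub-IFSes are needed, and it kills a two-point alternation just as surely as it kills $d=2$). The paper instead runs the cycle around the inscribed circle of a two-dimensional simplex of three product sub-IFSes, and the strict positivity comes from reconciling two quadratic forms of opposite sign -- one in $\bfZ(t)$ alone, which pins the Bernoulli maximum at the center, and one pairing $\bfZ(t)$ with its rotation $\bfZ'(t)$, which produces the gain -- via an explicit choice of perturbation matrices and a symmetry/trace computation; the paper also notes that continuity of the cycle is essential, so a genuine two-valued cycle is ruled out even in exponential time. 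Finally, your plan concedes that the explicit example remains to be produced; since that construction is the entire content of the theorem, the proposal as it stands is not a proof.
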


Since the sponge $\Lambda_\Phi$ appearing in this theorem is strongly Bara\'nski, there exists a piecewise affine expanding repeller $f:\Lambda_\Phi \to \Lambda_\Phi$ such that $f\circ \pi = \pi\circ\sigma$, where $\sigma : E^\N \to E^\N$ is the shift map (cf. Observation \ref{observationSSC}). Thus, Theorem \ref{theoremcounterexample} shows that the answer to Question \ref{mainquestion} is negative.

The contrast between Theorems \ref{theoremLGB} and \ref{theoremcounterexample} shows that the behavior of self-affine sponges is radically different in the two-dimensional and three-dimensional settings. See Remark \ref{remark2D3D} for some ideas about the cause of this difference.

A natural follow-up question is how common sponges with a dimension gap are. One way to measure this is to ask whether they represent a positive measure subset of the parameter space. We answer this question affirmatively by showing that dimension gaps are stable under perturbations: any Bara\'nski sponge whose defining IFS is sufficiently close to the defining IFS of a Bara\'nski sponge with a dimension gap also has a dimension gap. Equivalently, the class of Bara\'nski IFSes whose limit sets have a dimension gap is an open subset of the parameter space. This is an immediate corollary of the following theorem:

\begin{theorem}
\label{theoremcontinuous}
The functions
\begin{align}
\label{HDDD}
\Phi &\mapsto \HD(\Phi),&
\Phi &\mapsto \DynD(\Phi)
\end{align}
are continuous on the space of Bara\'nski IFSes.
\end{theorem}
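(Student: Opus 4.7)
The approach is to express $\HD(\Phi)$ and $\DynD(\Phi)$ as suprema of Hausdorff dimensions over compact families of ``model'' measures whose dimensions depend continuously on both $\Phi$ and the parameter indexing the measure, and then deduce continuity of the supremum by a routine compactness argument. For $\DynD(\Phi)$ the model family is the Bernoulli measures $\nu_\pp$ with $\pp\in\PP$, via Theorem~\ref{theoremDDbernoulli}. For $\HD(\Phi)$ the model family is the pseudo-Bernoulli measures mentioned in the introduction. Throughout, I take the natural topology on the parameter space: the index sets $A_i$ and $E$ are fixed, and the contraction parameters defining the maps $\phi_{i,a}$ vary continuously within the open Bara\'nski region.

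For $\DynD$, the first step is to verify that $(\Phi,\pp)\mapsto \HD(\nu_\pp)$ is jointly continuous. On a Bara\'nski sponge, this dimension is given by a Ledrappier--Young style formula: one orders the coordinates by the $\pp$-averaged Lyapunov exponents $\lambda_i(\pp) = -\sum_{\aa\in E} p_\aa \log|\phi'_{\aa,i}|$, then sums certain ratios built from entropies of projections of $\pp$ and these exponents. Entropies of finitely-supported probability measures and logs of contraction ratios depend jointly continuously on their data, so each fixed ordering gives a continuous expression in $(\Phi,\pp)$. The only subtlety is that the ordering is piecewise constant in $\pp$; but the formulas on adjacent strata agree on their common boundary (where two exponents coincide), yielding global continuity. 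Since $\PP$ is compact, the function $\Phi\mapsto\sup_\pp \HD(\nu_\pp)$ inherits continuity by a standard argument: lower semi-continuity is automatic from continuity of each $\HD(\nu_\pp)$, while upper semi-continuity follows by extracting, from any sequence of near-optimizers $\pp_n$ for $\Phi_n\to\Phi$, a subsequential limit $\pp^\ast$ and applying joint continuity at $(\Phi,\pp^\ast)$.

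For $\HD$, the same blueprint applies with pseudo-Bernoulli measures in place of Bernoulli measures. For each fixed period $T$, the pseudo-Bernoulli measures of period $T$ are indexed by a compact parameter space and admit an analogous Ledrappier--Young style dimension formula which should be jointly continuous by the same strata-and-entropy argument. The principal obstacle is controlling the regime $T\to\infty$: \textit{a priori} the supremum defining $\HD(\Phi)$ could be approached only through pseudo-Bernoulli measures of unboundedly large period, in a manner that jumps under perturbation. I anticipate this is resolved by identifying $\HD(\Phi)$ with the unique solution $s$ of an implicit equation $F(\Phi,s) = 0$ assembled from the same dimension-formula ingredients, where $F$ is continuous in $\Phi$ and strictly monotone in $s$; continuity of $\Phi\mapsto\HD(\Phi)$ then follows immediately from this implicit characterization, bypassing the $T\to\infty$ difficulty. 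Once both functions in \eqref{HDDD} are known to be continuous, the corollary that the class of Bara\'nski IFSes with $\HD(\Phi)>\DynD(\Phi)$ is open is immediate, since a strict inequality between continuous functions is preserved under small perturbations of $\Phi$.
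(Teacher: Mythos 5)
Your treatment of $\DynD$ is essentially the paper's argument and is fine: the paper uses the order-free formula \eqref{deltap1} for $\delta(\pp)$, which makes $(\Phi,\pp)\mapsto\delta(\pp)$ visibly jointly continuous and avoids your strata discussion altogether, but your version with \eqref{deltap2} and matching on the boundary strata also works, and compactness of $\PP$ then gives continuity of the supremum.

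The $\HD$ half has a genuine gap, in two places. First, your claim that ``for each fixed period $T$, the pseudo-Bernoulli measures of period $T$ are indexed by a compact parameter space'' is false as stated: $\RR_\lambda$ consists of \emph{arbitrary} continuous exponentially $\lambda$-periodic maps into $\PP$, an infinite-dimensional, non-compact family. To get compactness one must impose an equicontinuity bound; the paper restricts to cycles whose exponential Lipschitz constant is at most $k$ and invokes Arzela--Ascoli. Second, and more seriously, the unbounded-period regime is the actual crux of the proof, and your proposed resolution --- identifying $\HD(\Phi)$ as the root of an implicit equation $F(\Phi,s)=0$ monotone in $s$ --- is not constructed and there is no reason to expect it exists: a Bowen-type pressure characterization of $\HD(\Phi)$ is exactly the kind of variational identity whose failure is the main theorem of this paper (Theorem \ref{theoremcounterexample}). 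What the paper actually does is quantitative: it defines $\delta_k$ as the supremum of $\delta(\rr)$ over cycles with period in $[1,k]$ and exponential Lipschitz constant at most $k$ (so each $\Phi\mapsto\delta_k$ is continuous by the compactness argument), and then re-examines the proofs of Theorem \ref{theoremgood2} and Claim \ref{claimexistsr} to show that for every $\epsilon>0$ the near-optimal cycles produced there can be taken with period and Lipschitz constant bounded by a function $k(\epsilon)$ that is uniform as $\Phi$ ranges over a compact set; this yields $\delta_{k(\epsilon)}\geq\sup_{\rr\in\RR}\delta(\rr)-\epsilon$ locally uniformly in $\Phi$, hence locally uniform convergence $\delta_k\to\HD(\Phi)$ and therefore continuity. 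Without either this uniform truncation estimate or a concrete implicit characterization, upper semicontinuity of $\Phi\mapsto\HD(\Phi)$ is not established.
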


\begin{remark*}
It is not too hard to modify the proof of Theorem \ref{theoremcontinuous} to get a stronger result: the functions \eqref{HDDD} are computable in the sense of computable analysis (see \cite{Weihrauch} for an introduction). This means that there is an algorithm that outputs arbitrarily accurate approximations of $\HD(\Phi)$ and $\DynD(\Phi)$, given as input a sequence of approximations of $\Phi$. Every computable function is continuous \cite[Theorem 4.3.1]{Weihrauch}; the converse is not true, since there are only countably many computable functions.
\end{remark*}

\subsection{Computational results}
\label{subsectioncomputation}
The strategy of the proof of Theorem \ref{theoremcounterexample} is to come up with general formulas for the Hausdorff and dynamical dimensions of a Bara\'nski sponge, and then to compare them in a concrete example. For example, Theorem \ref{theoremDDbernoulli} gives a way to compute the dynamical dimension once the dimensions of the Bernoulli measures are known. To get a similar result for the Hausdorff dimension, we introduce a new class of measures which we call ``pseudo-Bernoulli''. These measures are not invariant, since if they were then their dimension could be no bigger than the dynamical dimension.

\begin{definition}
\label{definitionPB}
Recall that $\PP$ denotes the space of probability measures on $E$, the alphabet of the IFS. Given $\lambda > 1$, we call a function $\rr:(0,\infty)\to\PP$ \emph{exponentially $\lambda$-periodic} if for all $b > 0$, we have $\rr_{\lambda b} = \rr_b$. Here we denote the value of $\rr$ at the argument $b$ by $\rr_b$ instead of $\rr(b)$. We call $\rr$ \emph{exponentially $1$-periodic} if it is constant. (The advantange of this definition is that the uniform limit of exponentially $\lambda$-periodic continuous functions as $\lambda \searrow 1$ is exponentially $1$-periodic.) The class of exponentially $\lambda$-periodic continuous functions will be denoted $\RR_\lambda$, and the union will be denoted $\RR = \bigcup_{\lambda \geq 1} \RR_\lambda$. Elements of $\RR$ will be called \emph{cycles on $E$}. Finally, a \emph{pseudo-Bernoulli} measure is a measure of the form $\nu_\rr \df \pi_*[\mu_\rr]$, where $\rr\in\RR$, and
\begin{equation}
\label{PBdef}
\mu_\rr \df \prod_{n\in\N} \rr_n
\end{equation}
is a probability measure on $E^\N$.
\end{definition}

The following theorem subsumes Theorems \ref{theoremLGB} and \ref{theoremBMhigher} as special cases, see Section \ref{sectionequality} for details. The techniques we use to prove it are similar to the techniques originally used to prove Theorems \ref{theoremLGB} and \ref{theoremBMhigher}.

\begin{theorem}
\label{theoremHDcompute}
The Hausdorff dimension of a Bara\'nski sponge $\Lambda_\Phi$ is equal to the supremum of the Hausdorff dimensions of its pseudo-Bernoulli measures, i.e.
\begin{equation}
\label{HDcompute}
\HD(\Phi) = \sup_{\rr\in\RR} \HD(\nu_\rr).
\end{equation}
\end{theorem}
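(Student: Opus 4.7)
The lower bound $\HD(\Phi) \geq \sup_{\rr\in\RR} \HD(\nu_\rr)$ is immediate, since each pseudo-Bernoulli measure $\nu_\rr$ is supported on $\Lambda_\Phi$. The substantive task is the reverse inequality, and my plan is to extend the Bara\'nski and Kenyon--Peres template of covering $\Lambda_\Phi$ by approximate cubes at each scale, then matching the resulting covering dimension with the Hausdorff dimension of an explicit pseudo-Bernoulli measure via the mass distribution principle. The role of the class $\RR$ is precisely to supply the scale-varying probability weights needed because, unlike in the Sierpi\'nski setting, the relevant covers no longer have a single natural scale.

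First I would set up the multiscale cover. For $\omega\in E^\N$ and $b\in(0,1)$, define depths
\[
n_i(\omega,b) = \min\Bigl\{n:\textstyle\prod_{k\leq n}|\phi_{\omega_k,i}'|\leq b\Bigr\}
\]
and the approximate cube
\[
R_b(\omega) = \prod_{i\in D}\phi_{\omega\given n_i(\omega,b),\, i}([0,1]).
\]
Each $R_b(\omega)$ is an axis-parallel box of diameter comparable to $b$; using the Bara\'nski open-set condition to control overlaps, the family $\{R_b(\omega):\omega\in E^\N\}$ yields a cover of $\Lambda_\Phi$ that is efficient up to subexponential factors. A standard mass distribution argument then reduces the theorem to constructing, for each $s<\HD(\Phi)$, a pseudo-Bernoulli measure $\nu_\rr$ such that $\nu_\rr(R_b(\omega)) \leq b^s$ for $\mu_\rr$-typical $\omega$ and all sufficiently small $b$.

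To build such a measure, note that under the Bara\'nski hypothesis the $\pi$-preimage of $R_b(\omega)$ is specified, for each $k\leq \max_i n_i(\omega,b)$, by constraints on those coordinates $i$ with $n_i(\omega,b)\geq k$. Hence $\mu_\rr(R_b(\omega))$ factors as a product of terms $\rr_k(E_k(\omega,b))$, where $E_k\subset E$ collects those alphabet letters whose active coordinates (those $i$ with $n_i\geq k$) agree with $\omega_k$. Taking logs, dividing by $\log(1/b)$, and passing to the limit expresses $\HD(\nu_\rr)$ as an average over one $\log\lambda$-period of conditional entropies of the marginals of $\rr_b$ weighted by reciprocals of the Lyapunov sums $\chi_i(\rr_b) = -\sum_\aa \rr_b(\aa)\log|\phi_{\aa,i}'|$. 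Maximizing this scale-by-scale yields a candidate optimal profile $b\mapsto \pp^*(b)$ whose Ledrappier--Young-type dimension matches the box-dimension count from the covering step, and hence equals $\HD(\Phi)$.

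The main obstacle I foresee is verifying that the optimal profile $\pp^*$ lies in $\RR$, i.e., is (or can be uniformly approximated by) an exponentially $\lambda$-periodic continuous function. The natural period is determined by the contraction ratios $\{|\phi_{\aa,i}'|\}$, and when these are not logarithmically commensurable they admit no exact common period; exact periodicity should only be expected in special cases such as Sierpi\'nski sponges. I would handle the general case by approximation: for each $\epsilon>0$, use a Dirichlet-type simultaneous approximation to choose $\lambda$ large enough that multiplication by $\lambda$ permutes the scale structure to within error $\epsilon$, replace $\pp^*$ by a genuinely $\lambda$-periodic $\rr\in\RR_\lambda$ uniformly close to $\pp^*$, and bound the resulting change in $\HD(\nu_\rr)$ by the uniform continuity of the entropy and Lyapunov functionals in $\rr$. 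Letting $\epsilon\to0$ gives $\sup_{\rr\in\RR}\HD(\nu_\rr)\geq \HD(\Phi)$, completing the proof.
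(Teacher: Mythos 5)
The easy inequality and your overall reduction via the mass distribution principle are fine, but the heart of the upper bound --- producing, for each $s<\HD(\Phi)$, a cycle $\rr\in\RR$ with $\HD(\nu_\rr)\geq s$ --- is never actually carried out, and the mechanism you propose for it would fail. First, a cover of $\Lambda_\Phi$ by approximate cubes at a single scale $b$ counts boxes up to subexponential error and therefore computes the box-counting dimension, which for self-affine sponges is generically strictly larger than the Hausdorff dimension (cf.\ the discussion in Remark \ref{remark2D3D}); so ``the box-dimension count from the covering step'' does not equal $\HD(\Phi)$, and matching a measure to that count proves nothing about $\HD(\Phi)$. Detecting $\HD(\Phi)$ requires covers whose constituent sets live at different scales at different points, which is precisely why $\delta(\rr)$ is defined with an infimum over $B$. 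Second, ``maximizing scale-by-scale'' is not well posed: each $\rr_b$ enters $\delta(\rr,B)$ for a whole range of $B$'s through the coupled equations $\chi_i(\bfR_{B_i})=B$, and since $\delta(\rr)=\inf_B\delta(\rr,B)$, improving the integrand at one scale can decrease the value at another, so a profile optimized scale-by-scale need not realize $\sup_{\rr}\delta(\rr)$, nor is there any argument that its dimension dominates $\HD(\Phi)$.

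The missing idea is the one the paper's proof of Theorem \ref{theoremgood2} is built around: one does not construct a single optimal profile at all. Instead, for an \emph{arbitrary} $\omega\in E^\N$ one forms the empirical measures $\what\bfP_N$ of $\omega$, observes that (after a subsequential limit) $t\mapsto\what\bfQ_{\exp(t)}$ is a $2$-Lipschitz path in $\PP$, and uses the Arzela--Ascoli theorem to extract approximate periods; this is Claim \ref{claimexistsr}, which produces a genuinely exponentially periodic $\rr$ from a fixed countable family $\QQ$ whose averages $\what\bfR_M$ track $\what\bfP_M$ along a subsequence of scales. One then bounds $\underline\pdim(\pi(\omega),\nu_\rr)\le(1+o(1))\delta(\rr)$ and concludes via the Rogers--Taylor theorem (Theorem \ref{theoremRT}) and countable stability of Hausdorff dimension that $\HD(\Phi)\le\sup_{\rr\in\QQ}\delta(\rr)$. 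Your closing worry about periodicity is also misdirected: the obstruction is not logarithmic incommensurability of the contraction ratios (that issue would equally afflict the already-settled two-dimensional case), but the fact that the empirical-measure path of an arbitrary point need not be periodic at all; the cure is compactness of the family of Lipschitz paths, not Dirichlet-type approximation of the ratios.
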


\begin{remark*}
The inequality $\HD(\Phi) \geq \sup_{\rr\in\RR} \HD(\nu_\rr)$, which forms the easy direction of Theorem \ref{theoremHDcompute}, is all that is needed in the proof of Theorem \ref{theoremcounterexample}. However, the proof of Theorem \ref{theoremHDcompute} provides some motivation for why it is appropriate to consider measures of the form $\nu_\rr$ in the proof of Theorem \ref{theoremcounterexample}. Indeed, proving Theorem \ref{theoremHDcompute} is what caused the authors to start paying attention to the class of pseudo-Bernoulli measures.
\end{remark*}

Of course, Theorem \ref{theoremHDcompute} raises the question of how to compute the Hausdorff dimension of a pseudo-Bernoulli measure $\nu_\rr$. Similarly, Theorem \ref{theoremDDbernoulli} raises the (easier) question of how to compute the Hausdorff dimension of a Bernoulli measure $\nu_\pp$ -- which is answered by a Ledrappier--Young type formula (cf. \eqref{deltap2}). In fact, the latter question can be viewed as a special case of the former, since every Bernoulli measure is also a pseudo-Bernoulli measure. As a matter of notation, if $\pp \in \PP$, then we let $\pp$ also denote the constant cycle $b\mapsto\pp_b = \pp$, so that we can think of $\PP$ as being equal to $\RR_1 \subset \RR$. Note that the notation $\nu_\pp$ means the same thing whether we interpret it as referring to the Bernoulli measure corresponding to $\pp\in \PP$, or the pseudo-Bernoulli measure corresponding to the constant cycle $\pp\in \RR_1$.

To compute the Hausdorff dimension of pseudo-Bernoulli measures, we need to introduce some more notation and definitions:

\begin{notation}
\label{notationRB}
For each $\rr\in\RR$ and $B > 0$, we let
\begin{align}
\label{RBdef}
\bfR_B &= \int_0^B \rr_b\;\dee b,&
\what\bfR_B &= B^{-1} \bfR_B \in \PP.
\end{align}
Note that if $\rr$ is exponentially $\lambda$-periodic, then so is $\what\bfR$. We will use a similar convention with other letters in place of $\rr$; for example, if $\pp\in\PP$ then we write $\bfP_B = \int_0^B \pp_b \;\dee b = \int_0^B \pp \;\dee b = B\pp$ and $\what\bfP_B = B^{-1} \bfP_B = \pp$.
\end{notation}

\begin{definition}
\label{definitionlyapent}
Given $\pp\in\PP$ and $i\in D$, the \emph{$i$th Lyapunov exponent\Footnote{This terminology is not meant to imply that the Lyapunov exponents are distinct or have been arranged in increasing order, although it is often convenient to assume the latter (cf. Proposition \ref{propositionXinc} below).} of $\pp$} is the number
\[
\chi_i(\pp) \df -\int \log|\phi_{\aa,i}'| \;\dee\pp(\aa).
\]
Note that this definition makes sense even if the total mass of $\pp$ is not $1$, and we will use it sometimes in this more general sense. Given a coordinate set $I \subset D$, the \emph{entropy of $I$ with respect to $\pp$} is the number
\[
h_I(\pp) = h(I;\pp) \df -\int \log\pp([\aa]_I) \;\dee\pp(\aa),
\]
where
\begin{equation}
\label{aIdef}
[\aa]_I = \{\bb\in E : a_i = b_i \all i \in I\}.
\end{equation}
Note that $[\aa]_D = \{\aa\}$ and $[\aa]_\smallemptyset = E$.

Finally, given $I \subset I' \subset D$, then \emph{conditional entropy of $I'$ relative to $I$ with respect to $\pp$} is the number
\[
h(I'\given I;\pp) \df h(I';\pp) - h(I;\pp) = \int \log\frac{\pp([\aa]_I)}{\pp([\aa]_{I'})} \;\dee\pp(\aa).
\]
\end{definition}

\begin{definition}
\label{definitionnondegenerate}
Given $\rr\in\RR$, we let $E_\rr = \{\aa\in E : \rr_b(\aa) > 0 \text{ for some $b > 0$}\}$. We say that $\rr$ is \emph{nondegenerate} if the set $\{b > 0 : \rr_b(\aa) > 0 \text{ for all } \aa\in E_\rr\}$ is dense in $(0,\infty)$, and we denote the space of nondegenerate cycles by $\RR^*$. We also write $\RR_\lambda^* = \RR_\lambda\cap\RR^*$.

Note that every measure is nondegenerate when considered as a constant cycle.
\end{definition}

\begin{theorem}
\label{theoremPBcompute}
Let $\Lambda_\Phi$ be a Bara\'nski sponge. Then for all $\lambda \geq 1$ and $\rr\in\RR_\lambda^*$, the dimension $\HD(\nu_\rr)$ can be computed by the formula
\begin{equation}
\label{deltar}
\HD(\nu_\rr) = \delta(\rr) \df \inf_{B \in [1,\lambda]} \delta(\rr,B),
\end{equation}
where for each $B > 0$,
\begin{equation}
\label{deltarB1}
\delta(\rr,B) \df \frac{1}{B} \int_0^\infty h(\{i\in D : b \leq B_i\};\rr_b) \;\dee b,
\end{equation}
where the numbers $B_1,\ldots,B_d > 0$ are chosen so that
\begin{equation}
\label{Bidef}
B = \int_0^{B_i} \chi_i(\rr_b) \;\dee b = \chi_i(\bfR_{B_i}).
\end{equation}
If $\rr\in\RR_\lambda\butnot\RR_\lambda^*$, then $\HD(\nu_\rr) \leq \delta(\rr)$. The terms $\HD(\nu_\rr)$ $(\rr\in\RR_\lambda\butnot\RR_\lambda^*)$ do not contribute to the supremum in \eqref{HDcompute}.

In particular, for all $\pp\in\PP$, the dimension $\HD(\nu_\pp)$ can be computed by the formula
\begin{equation}
\label{deltap1}
\HD(\nu_\pp) = \delta(\pp) \df \int_0^\infty h(\{i\in D : b \leq 1/\chi_i(\pp)\};\pp) \;\dee b.
\end{equation}
\end{theorem}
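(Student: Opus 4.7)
The approach is to compute the lower local dimension $\underline{d}(\nu_\rr, x) = \liminf_{r\to 0} \log\nu_\rr(B(x,r))/\log r$ at $\nu_\rr$-almost every $x = \pi(\omega)$ and identify it with $\delta(\rr)$; the Billingsley/mass-distribution correspondence then yields $\HD(\nu_\rr) = \delta(\rr)$. For degenerate cycles I will only establish the upper bound $\HD(\nu_\rr)\le\delta(\rr)$, which is enough; then a continuity argument shows that approximation by cycles in $\RR^*$ lets us discard them from the supremum in \eqref{HDcompute}. The Bernoulli formula \eqref{deltap1} is the specialization $\rr\equiv \pp$, for which $\lambda = 1$ and $B_i = B/\chi_i(\pp)$.

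The heart of the argument is to replace balls by ``approximate rectangles''. Using the Bara\'nski separation condition coordinatewise together with the axis-aligned product structure of each $\phi_\aa$, the intersection of $\Lambda_\Phi$ with $B(\pi(\omega), e^{-B})$ is comparable (up to boundedly many translates and a bounded multiplicative distortion of the radius) to
$$R(\omega, B) \df \prod_{i\in D}\phi_{\omega\given n_i,\, i}([0,1]),$$
where $n_i = n_i(\omega, B)$ is the least $n$ with $|\phi_{\omega\given n,\, i}'| \le e^{-B}$. Applying the strong law of large numbers to the independent, non-stationary sequence $\{-\log|\phi_{\omega_n,\, i}'|\}_n$ (with $\mu_\rr$-expectations $\chi_i(\rr_n)$) gives $n_i(\omega, B) = B_i + o(B)$ for $\mu_\rr$-a.e. $\omega$, where $B_i$ is the deterministic quantity in \eqref{Bidef}. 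Since $R(\omega, B)$ is a coordinate product, its preimage in $E^\N$ under $\pi$ consists of those $\omega'$ agreeing with $\omega$ in coordinate $i$ at position $n$ whenever $n \le n_i(\omega, B)$, and by the product structure \eqref{PBdef} of $\mu_\rr$,
$$-\log\mu_\rr\big(\pi^{-1}R(\omega,B)\big) = \sum_{n\ge 1} -\log\rr_n\big([\omega_n]_{I_n(\omega,B)}\big), \quad I_n(\omega,B) \df \{i\in D : n\le n_i(\omega,B)\}.$$
A second application of the non-stationary law of large numbers converts this sum into $\int_0^\infty h(\{i\in D : b\le B_i\};\rr_b)\,\dee b + o(B)$, so dividing by $B$ yields $\delta(\rr, B) + o(1)$.

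To finish, I pass from a single scale to the $\liminf$. The change of variables $b\mapsto \lambda b$ together with $\rr_{\lambda b} = \rr_b$ and $B_i \mapsto \lambda B_i$ shows $\delta(\rr, \lambda B) = \delta(\rr, B)$, so $\delta(\rr,\cdot)$ depends only on the multiplicative phase $B\in[1,\lambda]$, and this phase is swept densely as $B\to\infty$. Hence $\underline{d}(\nu_\rr, \pi(\omega)) = \inf_{B\in[1,\lambda]}\delta(\rr, B) = \delta(\rr)$ for $\mu_\rr$-a.e. $\omega$.

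The hard part will be the uniformity, in the multiplicative phase of $B$, of both applications of the non-stationary law of large numbers: without it, the $\liminf$ of the local-dimension ratios could drop below $\inf_{B\in[1,\lambda]}\delta(\rr, B)$ along a sparse set of exceptional scales and destroy the lower bound on $\HD(\nu_\rr)$. Nondegeneracy $\rr\in\RR^*$ is exactly what buys this uniformity: every symbol in $E_\rr$ carries positive $\rr_b$-mass on a dense set of phases, which makes Kolmogorov-type tail estimates uniform across $[1,\lambda]$. For $\rr\in\RR_\lambda\butnot\RR_\lambda^*$ the rectangles $R(\omega, B)$ may degenerate at exceptional phases, so only the covering half of the argument survives and yields $\HD(\nu_\rr)\le\delta(\rr)$; degenerate cycles are then eliminated from \eqref{HDcompute} via continuity of $\delta(\cdot)$ under $\RR^*$-approximation.
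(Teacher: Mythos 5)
Your overall framework is the same as the paper's: Rogers--Taylor (Billingsley) together with a law of large numbers for the independent non-stationary sequences, with the ball replaced by an approximate rectangle whose depths $n_i\approx B_i$ come from \eqref{Bidef}, and exponential periodicity of $B\mapsto\delta(\rr,B)$ converting the $\liminf$ over scales into $\inf_{B\in[1,\lambda]}$. The easy half (rectangle inside ball, hence $\underline\pdim(\pi(\omega),\nu_\rr)\le\delta(\rr)$ a.s., hence $\HD(\nu_\rr)\le\delta(\rr)$, valid also for degenerate cycles) goes through as you describe, as does the specialization to \eqref{deltap1}.

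The gap is in the hard half, the lower bound on the local dimension. You assert that $\Lambda_\Phi\cap B(\pi(\omega),e^{-B})$ is ``comparable, up to boundedly many translates and a bounded distortion of the radius,'' to the product rectangle $R(\omega,B)$. Under the Bara\'nski (open set) condition this is not automatic: the level-$n$ basic intervals of a base IFS $\Phi_i$ may abut, so a small ball around $\pi(\omega)$ can meet unboundedly many approximate rectangles, and $\nu_\rr(B(\pi(\omega),\rho))$ could a priori greatly exceed $\mu_\rr(\pi^{-1}R(\omega,B))$. What must be proved --- and what the paper proves --- is that for a.e.\ $\omega$ and all large $B$ the full preimage $\pi^{-1}(B(\pi(\omega),\rho))$ is contained in the slightly shallower cylinder $B_\omega(N_1,\dots,N_d)$ with $N_i=\lfloor(1-\epsilon)B_i\rfloor$. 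This needs two ingredients you do not supply: the open set condition for the projected IFSes $\Phi_I$ with $I=\{i: B_i\ge B_j\}$ (automatic here since the sponge is Bara\'nski), and, crucially, a quantitative buffer: one must find, between depth $N_j$ and depth $M_j=\lfloor(1-\epsilon/2)B_j\rfloor$, an index $n$ at which $\phi_{\omega_n,i}\circ\phi_{\omega_{n+1},i}([0,1])\subset(0,1)$, so that the depth-$M_j$ cylinder sits a definite distance $\epsilon_0\,|\phi_{\omega\given M_j,i}'|$ inside the interior of the depth-$N_j$ cylinder; only then does $\pi(\tau)\in B(\pi(\omega),\rho)$ force $\tau\in[\omega\given N_j]_{\{j\}}$. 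This is where nondegeneracy actually enters: it guarantees, via a second law of large numbers applied to these ``interior events,'' that such an $n$ occurs in every window. Your stated reason for needing $\rr\in\RR^*$ --- uniformity of Kolmogorov-type tail bounds across phases --- is not the issue (the a.s.\ law of large numbers for independent, uniformly bounded-variance variables holds along all $N$ simultaneously, degenerate cycle or not), and invoking it will not close the gap. Without the buffer argument the inequality $\HD(\nu_\rr)\ge\delta(\rr)$ remains unproved; your treatment of the degenerate case by perturbing into $\RR^*$ is consistent with the paper but likewise needs an explicit perturbation (e.g.\ mixing in $\epsilon\what\bfR$) for which $\delta$ is controlled.
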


We remark that the map $B \mapsto \delta(\rr,B)$ is exponentially $\lambda$-periodic, so that the infimum in \eqref{deltar} would be the same if it was taken over all $B > 0$ rather than only over $B \in [1,\lambda]$. We also remark on the geometric meaning of the quantities $B_1,\ldots,B_d$: if $\omega\in E^\N$ is a $\mu_\rr$-typical point and $\rho = e^{-B}$, then $B_i$ is approximately the number of coordinates of $\omega$ that must be known before the $i$th coordinate of $\pi(\omega)$ can be computed with accuracy $\rho$. Thus the numbers $B_1,\ldots,B_d$ are useful at estimating the $\nu_\rr$-measure of the ball $B(\pi(\omega),\rho)$. For a more rigorous presentation of this idea, see the proof of Theorem \ref{theoremPBcompute}.

Formulas \eqref{deltarB1} and \eqref{deltap1} share a particularly nice feature, viz. their validity does not depend on the ordering of the numbers $B_1,\ldots,B_d$ (in the case of \eqref{deltarB1}) or of the Lyapunov exponents $\chi_1(\pp),\ldots,\chi_d(\pp)$ (in the case of \eqref{deltap1}). However, it is sometimes more useful to have versions of these formulas that do depend on the orderings of these numbers. For convenience, for all $i = 0,\ldots,d$ we write
\[
I_{\leq i} = \{1,\ldots,i\},
\]
so that in particular $I_{\leq 0} = \emptyset$ and $I_{\leq d} = D$.

\begin{proposition}
\label{propositionXinc}
If $B_1 \geq \cdots \geq B_d$ for some $\rr\in\RR$ and $B > 0$, then
\begin{equation}
\label{deltarB2}
\delta(\rr,B) = \sum_{i\in D} \frac{\int_0^{B_i} h(I_{\leq i} \given I_{\leq i - 1};\rr_b) \;\dee b}{\int_0^{B_i} \chi_i(\rr_b)\;\dee b}
\leq \sum_{i\in D} \frac{h(I_{\leq i} \given I_{\leq i - 1};\what\bfR_{B_i})}{\chi_i(\what\bfR_{B_i})}\cdot
\end{equation}
In particular, if $\chi_1(\pp) \leq \cdots \leq \chi_d(\pp)$ for some $\pp\in\PP$, then
\begin{equation}
\label{deltap2}
\delta(\pp) = \sum_{i\in D} \frac{h(I_{\leq i} \given I_{\leq i - 1};\pp)}{\chi_i(\pp)}\cdot
\end{equation}
\end{proposition}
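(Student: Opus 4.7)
The equality in \eqref{deltarB2} will come from partitioning the integration range $(0,\infty)$ in the definition \eqref{deltarB1} according to the threshold values $B_1,\ldots,B_d$, and the inequality will come from concavity of conditional entropy. The final special case is obtained by evaluating on the constant cycle.

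\emph{Step 1: rewrite $\delta(\rr,B)$ using the ordering.} Since $B_1 \geq B_2 \geq \cdots \geq B_d$, the set $\{i\in D : b \leq B_i\}$ depends on $b$ in a simple piecewise way: setting $B_{d+1} \df 0$, it is equal to $I_{\leq i}$ exactly when $b \in (B_{i+1}, B_i]$, and it is empty for $b > B_1$. Since $h(\emptyset;\rr_b)=0$, the integral in \eqref{deltarB1} splits as
\[
\int_0^\infty h(\{i: b\leq B_i\};\rr_b)\;\dee b \;=\; \sum_{i=1}^d \int_{B_{i+1}}^{B_i} h(I_{\leq i};\rr_b)\;\dee b.
\]

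\emph{Step 2: rearrange via the conditional-entropy telescoping.} Write $h(I_{\leq i};\rr_b) = \sum_{j=1}^i h(I_{\leq j}\given I_{\leq j-1};\rr_b)$ and swap the order of summation. The index pair $(i,j)$ with $1\leq j\leq i\leq d$ is rewritten by fixing $j$ and letting $i$ range over $\{j,\ldots,d\}$, producing a telescoping sum of intervals whose union is $(0,B_j]$:
\[
\sum_{i=1}^d \int_{B_{i+1}}^{B_i} h(I_{\leq i};\rr_b)\;\dee b
\;=\; \sum_{j=1}^d \int_0^{B_j} h(I_{\leq j}\given I_{\leq j-1};\rr_b)\;\dee b.
\]
Dividing by $B$ and using the defining identity $B = \int_0^{B_j}\chi_j(\rr_b)\;\dee b$ to replace the single factor $1/B$ by individual denominators in each summand immediately gives the equality in \eqref{deltarB2}.

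\emph{Step 3: the inequality via concavity.} The map $\pp\mapsto h(I'\given I;\pp)$ is concave on the space of (possibly subprobability) measures on $E$, since conditional entropy of a partition refinement is concave in the underlying distribution. Applied to $\pp=\what\bfR_{B_j} = B_j^{-1}\int_0^{B_j}\rr_b\;\dee b$, Jensen's inequality yields
\[
\int_0^{B_j} h(I_{\leq j}\given I_{\leq j-1};\rr_b)\;\dee b \;\leq\; B_j\, h\bigl(I_{\leq j}\given I_{\leq j-1};\what\bfR_{B_j}\bigr),
\]
while linearity of $\chi_j$ in its argument gives the exact identity $\int_0^{B_j}\chi_j(\rr_b)\;\dee b = B_j\,\chi_j(\what\bfR_{B_j})$. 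Dividing and summing over $j$ delivers the right-hand inequality in \eqref{deltarB2}.

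\emph{Step 4: the Bernoulli special case.} For $\rr=\pp\in\PP\subset\RR_1$, we have $\rr_b\equiv\pp$, and $B = B_i\chi_i(\pp)$ forces $B_i = B/\chi_i(\pp)$; the assumption $\chi_1(\pp)\leq\cdots\leq\chi_d(\pp)$ is exactly $B_1\geq\cdots\geq B_d$, so Step 1--2 apply. Each integrand is independent of $b$, so the fraction in the equality of \eqref{deltarB2} collapses to $h(I_{\leq i}\given I_{\leq i-1};\pp)/\chi_i(\pp)$. The result is independent of $B$, so it equals $\delta(\pp) = \inf_{B\in[1,\lambda]}\delta(\pp,B)$, yielding \eqref{deltap2}.

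The whole argument is essentially bookkeeping; the only non-formal input is the concavity of $\pp\mapsto h(I'\given I;\pp)$, which is a standard fact from information theory, so no step should present a serious obstacle.
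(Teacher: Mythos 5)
Your proposal is correct and follows essentially the same route as the paper: partition $(0,\infty)$ by the ordered thresholds $B_{i}$, telescope via conditional entropies to get $B\,\delta(\rr,B)=\sum_i\int_0^{B_i}h(I_{\leq i}\given I_{\leq i-1};\rr_b)\,\dee b$, and then apply concavity of conditional entropy (which is exactly Corollary \ref{corollaryentropy}, derived from Lemma \ref{lemmaentropy}) together with linearity of $\chi_i$. No gaps.
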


\begin{remark*}
The formula \eqref{deltap2} is a special case of a theorem of Feng and Hu \cite[Theorem 2.11]{FengHu}. It can be viewed as an analogue of the well-known Ledrappier--Young formula for the Hausdorff dimension of the unstable leaves of an ergodic invariant measure of a diffeomorphism \cite[Corollary D$'$]{LedrappierYoung2}. In fact, \eqref{deltap2} is close to being a special case of the ``Ledrappier--Young formula for endomorphisms'' \cite[Theorem 2.8 and (19)]{QianXie}, although there are formal difficulties with deducing one from the other.\Footnote{Specifically, it is not clear whether every expanding repeller can be embedded into an expanding global endomorphism of a compact manifold.} Since the formula \eqref{deltarB2} bears some resemblance to \eqref{deltap2}, it can be thought of as extending this Ledrappier--Young-type formula to certain non-invariant measures of a dynamical system.
\end{remark*}

We remark that the results of this section are the first in the literature to address dimension questions regarding self-affine sponges of dimension at least three, with the exception of various results regarding Sierpi\'nski sponges \cite{KenyonPeres, Olsen1, Olsen2}. This significant gap in the literature was recently posed as question by Fraser and Howroyd \cite[Question 4.3]{FraserHowroyd}, namely how to compute the Hausdorff dimension and the upper and lower Assouad and box dimensions of self-affine sponges. The results of this subsection can be seen as partially answering this broad question.\\

{\bf Outline of the paper.}
In Section \ref{sectionweaker} we introduce a weakening of the Bara\'nski assumption that we will use in our proofs. In Section \ref{sectionQBdim} we prove Theorem \ref{theoremPBcompute} and Proposition \ref{propositionXinc}. In Section \ref{sectionHDDD} we prove Theorems \ref{theoremDDbernoulli} and \ref{theoremHDcompute}. In Section \ref{sectioncontinuous} we prove Theorem \ref{theoremcontinuous}. 
In Section \ref{sectionequality} we give new proofs of Theorems \ref{theoremLGB} and \ref{theoremBMhigher} using Theorem \ref{theoremHDcompute}. We prove our main result, Theorem \ref{theoremcounterexample}, in Section \ref{sectioncounterexample}. Finally, in Section \ref{sectionopen} we list a few open questions. The sections are mostly independent of each other, but they are ordered according to the dependencies between the proofs.\\

{\bf Notation.} For the reader's convenience we summarize a list of commonly used symbols below:

\noindent \medskip{}
\vskip3pt
\renewcommand{\arraystretch}{1.5}
\begin{tabular}{ll}
\hline 
IFS & Iterated function system\\
$d$  & Dimension of the ambient Euclidean space\\
$D$ & $D \df \{1,\ldots,d\}$\\
$A_i$ & The alphabet of the base IFS $\Phi_i$\\
$\Phi_i$ & The base IFS in coordinate $i$: $\Phi_i = (\phi_{i,a})_{a\in A_i}$\\
$A$ & The full product alphabet: $A \df \prod_{i\in D} A_i$\\
$E$ & The alphabet of the IFS: $E \subset A$\\
$\Phi$ & The diagonal IFS used to define the self-affine sponge: $\Phi \df (\phi_\aa)_{\aa\in E}$\\
$\pi:E^\N \to [0,1]^d$ & The coding map of $\Phi$\\
$\phi_{\omega\given n}$ & IFS contraction corresponding to the word $\omega\given n$: $\phi_{\omega\given n} \df \phi_{\omega_1}\circ\cdots\circ\phi_{\omega_n}$\\
$\Lambda_\Phi$ & The limit set of $\Phi$: $\Lambda_\Phi \df \pi(E^\N)$\\
$\sigma:E^\N\to E^\N$ & The shift map\\

$\pi_*[\mu]$ & Pushforward of a measure $\mu$ under the coding map $\pi$\\
$\PP$ & The space of probability measures on the alphabet $E$\\
$\nu_\pp$ & Bernoulli measure: $\nu_\pp = \pi_*[\pp^\N]$ for some $\pp \in \PP$\\
$\HD$ & Hausdorff dimension\\
$\DynD$ & Dynamical dimension, see Definition \ref{definitionDD}\\
$\RR$ & Exponentially periodic continuous $\PP$-valued functions, see Definition \ref{definitionPB}\\
$\RR_\lambda$ & Exponentially $\lambda$-periodic continuous $\PP$-valued functions\\
$\QQ$ & Countable dense subset of $\RR$\\
$\rr_b$ & Value of $\rr:(0,\infty)\to\PP$ at $b\in (0,\infty)$\\
$\mu_\rr$ & $\mu_\rr \df \prod_{n\in\N} \rr_n$\\
$\nu_\rr$ & Pseudo-Bernoulli measure: $\nu_\rr \df \pi_*[\mu_\rr]$ for some $\rr\in\RR$\\
$\bfR_B$, $\bfS_B$ etc.\footnotemark & $\bfR_B \df \int_0^B \rr_b\;\dee b$\\
$\what\bfR_B$, $\what\bfS_B$ etc. & $\what\bfR_B \df B^{-1} \bfR_B \in \PP$\\
$\chi_i(\pp)$ & $i$th Lyapunov exponent of $\pp$, see Definition \ref{definitionlyapent}\\
$h_I(\pp) \equiv h(I;\pp)$ & Entropy of $I$ with respect to $\pp$ for a coordinate set $I \subset D$, see Definition \ref{definitionlyapent}\\
$h(I'\given I;\pp)$ & Conditional entropy of $I'$ relative to $I$ with respect to $\pp$, see Definition \ref{definitionlyapent}\\
$[\aa]_I$ & $[\aa]_I \df \{\bb\in E : a_i = b_i \all i \in I\}$\\
$I_{\leq i}$ & $I_{\leq i} \df \{1,\ldots,i\}$\\
\hline
\end{tabular}
\Footnotetext{Expressions such as $\bfS_B$ sometimes appear without a corresponding function $b\mapsto \ss_b\in\PP$, such as in the proof of Theorem \ref{theoremgood2}. However, in these cases the map $B\mapsto \bfS_B$ is still an increasing map from $(0,\infty)$ to the space of measures on $E$ such that $\bfS_B(E) = B$ for all $B > 0$.}

\begin{tabular}{ll}
\hline
$\RR^*$ & Nondegenerate cycles on $E$, see Definition \ref{definitionnondegenerate}\\
$\RR_\lambda^*$ & $\RR_\lambda^* \df \RR_\lambda\cap\RR^*$\\
$\delta(\pp)$ & Formula for computing $\HD(\nu_\pp)$:\\
& \hspace{0.1 in} $\delta(\pp) \df \int_0^\infty h(\{i\in D : b \leq 1/\chi_i(\pp)\};\pp) \;\dee b$\\
$B_i$ & The unique solution to $B = \int_0^{B_i} \chi_i(\rr_b) \;\dee b = \chi_i(\bfR_{B_i})$\\

$\delta(\rr,B)$ & $\delta(\rr,B) \df \frac{1}{B} \int_0^\infty h(\{i\in D : b \leq B_i\};\rr_b) \;\dee b$\\

$\delta(\rr)$ & Formula for computing $\HD(\nu_\rr)$:\\
& \hspace{0.1 in}  $\delta(\rr) \df \inf_{B \in [1,\lambda]} \delta(\rr,B)$, where $\lambda$ is the exponential period of $\rr$\\
$I(\pp,x)$ & $I(\pp,x) \df \{i\in D : \chi_i(\pp) \leq x\}$\\
$\pdim(\xx,\mu)$ & Lower pointwise dimension of $\mu$ at $\xx$\\
$\delta_x$ & Dirac point measure at $x$\\
$X_i(\omega\given N)$ & $X_i(\omega\given N) \df -\log|\phi_{\omega\given N,i}'|$\\
$[\omega\given N]_I$ & $[\omega\given N]_I \df \{\tau\in E^\N : \tau_n \in [\omega_n]_I \all n \leq N\}$\\
$B_\omega(N_1,\ldots,N_d)$ & $B_\omega(N_1,\ldots,N_d) \df \bigcap_{i\in D} [\omega\given {N_i}]_{\{i\}}$\\
$\bfA\cdot\bfB$ & product of matrices $\bfA$ and $\bfB$\\
$\lb \vv,\ww\rb$ & scalar product of vectors $\vv$ and $\ww$\\
$J$ & $J \df \{1,2,3\}$ is the index set for the sub-IFSes of our construction\\
$\Delta$ & Probability measures on $J$\\
$\uu$ & $\uu \df (1/3,1/3,1/3)$\\
$\bfU$ & $\bfU \df [1,1,1]^T\cdot [1,1,1]$\\
\hline
\end{tabular}

\section{Weaker projection conditions}
\label{sectionweaker}

In the theorems of the previous section, we always assumed that the self-affine sponge in question was Bara\'nski -- i.e. that its base IFSes satisfied the open set condition. This assumption is not always necessary and can in some circumstances be replaced by a weaker assumption:

\begin{definition}
\label{definitiongood}
Let $\Lambda_\Phi$ be a self-affine sponge, and let $I \subset D$ be a coordinate set. Let
\[
\Phi_I = (\phi_{I,\aa})_{\aa\in \pi_I(E)},
\]
where $\phi_{I,\aa}:[0,1]^I\to [0,1]^I$ is defined by the formula
\[
\phi_{I,\aa}(\xx) = \big(\phi_{\aa,i}(x_i)\big)_{i\in I}
\]
and $\pi_I:A\to A_I \df \prod_{i\in I} A_i$ is the projection map. We call $I$ \emph{good} if the IFS $\Phi_I$ satisfies the open set condition, i.e. if the collection
\[
\big(\phi_{I,\aa}(\I^I)\big)_{\aa\in \pi_I(E)}
\]
is disjoint, where $\I = (0,1)$. Also, a measure $\pp\in \PP$ is called \emph{good} if for every $x > 0$, the set
\begin{equation}
\label{Ipx}
I(\pp,x) = \{i\in D : \chi_i(\pp) \leq x\}
\end{equation}
is good. Next, a cycle $\rr\in\RR$ is called \emph{good} if the measures $\what\bfR_B$ $(B > 0)$ are all good. Note that $\pp$ is good as a measure if and only if it is good as a constant cycle. Finally, a sponge $\Lambda_\Phi$ is \emph{good} if all measures (and thus also all cycles) on $E$ are good. Note that every Bara\'nski sponge is good, since all of its coordinate sets are good.
\end{definition}

\begin{theorem}[Generalization of Theorem \ref{theoremPBcompute}]
\label{theoremgood1}
Let $\Lambda_\Phi$ be an arbitrary self-affine sponge. Then for all $\rr\in\RR$, we have
\[
\HD(\nu_\rr) \leq \delta(\rr),
\]
with equality if $\rr$ is good and nondegenerate. Here $\delta(\rr)$ is defined in the same way as in Theorem \ref{theoremPBcompute}. In particular, for all $\pp\in\PP$, we have
\[
\HD(\nu_\pp) \leq \delta(\pp),
\]
with equality if $\pp$ is good.
\end{theorem}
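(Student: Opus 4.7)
The plan is to compute $\HD(\nu_\rr)$ via the lower pointwise dimension $\pdim(\pi(\omega),\nu_\rr)$ at $\mu_\rr$-typical points $\omega \in E^\N$, using the standard principle that $\HD(\nu_\rr)$ equals the essential supremum of this pointwise dimension under the pushforward (with only an inequality in one direction available when goodness fails). The heart of the argument is a geometric identification: if $\rho = e^{-B}$, then the ball $B(\pi(\omega),\rho)$ should be sandwiched between geometric cylinders of the form $\pi(B_\omega(N_1,\ldots,N_d))$, where $B_\omega(N_1,\ldots,N_d) = \bigcap_{i\in D}[\omega|N_i]_{\{i\}}$ and $N_i$ is chosen so that $\sum_{n=1}^{N_i} -\log|\phi'_{\omega_n,i}| \approx B$, i.e., the $i$th coordinate of $\pi(\omega)$ is just resolvable at scale $\rho$ after reading $N_i$ symbols.

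For $\mu_\rr$-typical $\omega$, a law of large numbers for independent but not identically distributed random variables gives $\sum_{n=1}^{N} -\log|\phi'_{\omega_n,i}| \approx \int_0^N \chi_i(\rr_b)\,\dee b = \chi_i(\bfR_N)$, so the defining relation $B = \chi_i(\bfR_{B_i})$ yields $N_i \approx B_i$. Similarly, since the $n$th symbol $\omega_n$ is distributed as $\rr_n$ independently across $n$, and the event "$\tau \in B_\omega(N_1,\ldots,N_d)$" is equivalent to requiring $\tau_n \in [\omega_n]_{I_n}$ with $I_n = \{i : n \leq N_i\}$, an analogous LLN gives
\[
-\log \mu_\rr\bigl(B_\omega(N_1,\ldots,N_d)\bigr) \approx \sum_n h(I_n;\rr_n) \approx \int_0^\infty h(\{i \in D : b \leq B_i\};\rr_b)\,\dee b = B \cdot \delta(\rr,B).
\]
Thus the pointwise dimension along the subsequence $B \to \infty$ is $\delta(\rr,B)$; taking $\liminf_{B\to\infty}$ and using the exponential $\lambda$-periodicity of $B \mapsto \delta(\rr,B)$ reduces this to $\inf_{B \in [1,\lambda]} \delta(\rr,B) = \delta(\rr)$.

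The two directions now separate cleanly. The upper bound $\HD(\nu_\rr) \leq \delta(\rr)$ corresponds to showing $\mu_\rr$-a.s.\ that $\nu_\rr(B(\pi(\omega),\rho)) \geq \mu_\rr(B_\omega(N_1,\ldots,N_d))$ up to controlled factors. This inclusion of a single cylinder into the preimage of the ball always holds for diagonal IFSes because contraction of the $i$th base map by a factor less than $\rho$ after $N_i$ steps ensures the $i$th coordinate of the image lies within $\rho$ of the $i$th coordinate of $\pi(\omega)$; no separation hypothesis is needed. The matching lower bound, valid when $\rr$ is good and nondegenerate, requires the reverse inclusion in a measure-theoretic sense: namely that $\nu_\rr(B(\pi(\omega),\rho))$ is not much larger than the $\mu_\rr$-mass of the cylinder, which reduces to controlling preimages of a single point under the projection $\pi_I$ for $I = I_n$. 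Here is where I expect the main obstacle: the projected IFS $\Phi_I$ for $I = \{i : b \leq B_i\}$ varies with $b$, and goodness of $\rr$ is precisely the hypothesis that each of these projections satisfies the open set condition, so that overlaps contribute only a bounded combinatorial multiplicity. Nondegeneracy is used to ensure that the instantaneous measures $\rr_n$ charge the same alphabet $E_\rr$ densely in $n$, which guarantees the $B_i$ depend continuously on $b$ and the approximations above hold uniformly.

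Finally, the claim for a non-nondegenerate $\rr \in \RR_\lambda$ that $\HD(\nu_\rr)$ does not contribute to the supremum in \eqref{HDcompute} will follow by approximating $\rr$ by nondegenerate cycles $\rr^{(k)} \in \RR_\lambda^*$ (e.g., by convex-combining with a strictly positive measure and letting the weight tend to $0$) and using continuity of $\delta$ together with upper semicontinuity of the Hausdorff dimension of Bernoulli-type measures under such perturbations. The specialization to constant cycles $\pp \in \PP$ is immediate since then $B_i = 1/\chi_i(\pp)$ and the integral collapses to $\delta(\pp)$.
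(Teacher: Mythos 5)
Your overall architecture matches the paper's: Rogers--Taylor plus a law of large numbers for the independent, non-identically distributed letters, with the upper bound coming from the inclusion $\pi(B_\omega(N_1,\ldots,N_d))\subset B(\pi(\omega),\rho)$ for $N_i$ slightly larger than $B_i$, and the liminf over scales collapsing to $\inf_{B\in[1,\lambda]}\delta(\rr,B)$ by exponential periodicity. That half is essentially correct as sketched.

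The gap is in the lower bound. You propose to handle overlaps via "bounded combinatorial multiplicity" from the open set condition, but that does not suffice here: even if only boundedly many depth-$N_i$ cylinders meet $B(\pi(\omega),\rho)$, their $\mu_\rr$-masses need not be comparable to that of $\omega$'s own cylinder, so you cannot conclude $\nu_\rr(B(\pi(\omega),\rho))\lesssim\mu_\rr(B_\omega(N_1,\ldots,N_d))$. What is actually needed (and what the paper proves) is the set inclusion $\pi^{-1}(B(\pi(\omega),\rho))\subset B_\omega(N_1,\ldots,N_d)$ with $N_i=\lfloor(1-\epsilon)B_i\rfloor$, and the obstruction is that $\pi(\omega)$ could lie on or near the \emph{boundary} of its cylinder, where the OSC gives no separation from neighboring cylinders. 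The paper's key device is to introduce intermediate depths $M_i=\lfloor(1-\epsilon/2)B_i\rfloor$ and the events $E_n(i)=[\phi_{\omega_n,i}\circ\phi_{\omega_{n+1},i}([0,1])\subset(0,1)]$: if such an event occurs for some $n$ between $N_j$ and $M_j$, then the depth-$M$ cylinder containing $\pi(\omega)$ sits at distance at least $\epsilon_0\exp(-X_i(\omega\given M_i))>\rho$ from the complement of the open depth-$N$ cylinder, and goodness of the coordinate set $I=\{i:B_i\geq B_j\}$ then forces any $\tau$ with $\pi(\tau)\in B(\pi(\omega),\rho)$ to share the first $N_j$ letters in coordinate $j$. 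This is also where nondegeneracy genuinely enters: it guarantees that the probability $f(\rr_b)$ of these interior-landing events is bounded below on a dense (hence, by continuity and periodicity, substantial) set of scales, so that by the LLN at least one such event occurs in each window $(N_j,M_j)$ almost surely for large $B$. Your stated role for nondegeneracy (continuity of $b\mapsto B_i$ and uniformity of approximations) is not the issue, and without the separation mechanism the lower bound does not close. The final paragraph is also slightly off target: the inequality $\HD(\nu_\rr)\leq\delta(\rr)$ for degenerate $\rr$ already follows from the upper bound with no approximation argument; the approximation by nondegenerate cycles is only needed later (in the proof of Theorem \ref{theoremgood2}) to show such cycles do not affect the supremum.
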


\begin{theorem}
\label{theoremgood2}
Let $\Lambda_\Phi$ be an arbitrary self-affine sponge. Then
\begin{align} \label{HDbounds}
\sup_{\substack{\rr\in\RR \\ \text{good}}}\delta(\rr)
\leq \HD(\Phi)
&\leq \sup_{\rr\in\RR}\delta(\rr),\\ \label{DDbounds}
\sup_{\substack{\pp\in\PP \\ \text{good}}} \delta(\pp)
\leq \DynD(\Phi)
&\leq \sup_{\pp\in\PP} \delta(\pp).
\end{align}
\end{theorem}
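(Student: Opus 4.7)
The plan is to derive both lower bounds directly from Theorem \ref{theoremgood1}, to obtain the upper bound on $\DynD(\Phi)$ via ergodic decomposition and the Feng--Hu formula, and to establish the upper bound on $\HD(\Phi)$ via multi-scale covers of $\Lambda_\Phi$.

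For the lower bound on $\DynD$, let $\pp\in\PP$ be good; since every measure is nondegenerate as a constant cycle, Theorem \ref{theoremgood1} gives $\HD(\nu_\pp) = \delta(\pp)$. Because $\nu_\pp = \pi_*[\pp^\N]$ is the pushforward of the shift-invariant measure $\pp^\N$, we have $\delta(\pp) \leq \DynD(\Phi)$, and supping over good $\pp$ produces \eqref{DDbounds}. For the lower bound on $\HD$, if $\rr\in\RR$ is good and nondegenerate then Theorem \ref{theoremgood1} gives $\HD(\nu_\rr) = \delta(\rr) \leq \HD(\Phi)$. To handle a good but degenerate $\rr$, I would approximate it by $\rr_b^{(\epsilon)} \df (1-\epsilon)\rr_b + \epsilon\eta$, where $\eta$ is the uniform probability measure on $E$: each $\rr^{(\epsilon)}$ is visibly nondegenerate; goodness of $\rr$ passes to $\rr^{(\epsilon)}$ for small $\epsilon$, since the coordinate set $I(\what\bfR_B,x)$ is locally constant in $\epsilon$ away from a negligible set of $x$ that does not affect the integrals defining $\delta$; and continuity of $\delta$ under this perturbation yields $\delta(\rr^{(\epsilon)})\to\delta(\rr)$, completing the bound.

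For the upper bound on $\DynD$, let $\mu$ be any shift-invariant probability measure on $E^\N$. By ergodic decomposition it suffices to consider ergodic $\mu$. Let $\pp \df \mu_1$ be the one-dimensional marginal and arrange coordinates so that $\chi_1(\pp) \leq \cdots \leq \chi_d(\pp)$. The Feng--Hu formula (cf.\ the remark following Proposition \ref{propositionXinc}) yields
\[
\HD(\pi_*[\mu]) = \sum_{i\in D} \frac{h(I_{\leq i}\given I_{\leq i-1};\mu)}{\chi_i(\pp)},
\]
where the conditional entropies are now those of the $I_{\leq i}$-projection factors of $\mu$ on the shift space. Subadditivity of Shannon entropy under joins bounds the dynamical entropy of such a factor by the Shannon entropy of the corresponding single-coordinate marginal of $\pp$, so each summand is at most $h(I_{\leq i}\given I_{\leq i-1};\pp)/\chi_i(\pp)$; summing and using \eqref{deltap2} gives $\HD(\pi_*[\mu]) \leq \delta(\pp) \leq \sup_{\pp\in\PP}\delta(\pp)$.

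For the upper bound on $\HD$, I would cover $\Lambda_\Phi$ at scale $\rho = e^{-B}$ by approximate cubes $B_\omega(N_1,\ldots,N_d)$, where for each word $\omega$ the depths $N_i = N_i(\omega)$ are chosen so that $X_i(\omega\given N_i)$ is comparable to $B$; the resulting cubes have side length $\approx \rho$ in every coordinate. Grouping words by their empirical letter distribution at each depth, a standard entropy counting argument gives a bound of the form $\exp\!\bigl(B\,\delta(\rr,B) + o(B)\bigr)$ on the number of cubes whose empirical profile is close to a given cycle $\rr$. Collating the cover over the fundamental domain $B \in [1,\lambda]$ and using compactness of $\RR_\lambda$ together with uniform continuity of $(\rr,B)\mapsto\delta(\rr,B)$, any $s > \sup_{\rr\in\RR}\delta(\rr)$ yields a cover with finite $s$-dimensional Hausdorff measure as $\rho\to 0$, producing \eqref{HDbounds}. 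The main obstacle lies in this last step: one must simultaneously track empirical letter distributions across both depth and scale, and extract a limit cycle $\rr\in\RR$ realising the supremum from a sequence of finite-scale empirical measures, which requires delicate compactness and continuity arguments in $\RR_\lambda$ that must be carried out without any separation assumption on $\Phi$.
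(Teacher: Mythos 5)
The decisive gap is in your upper bound for $\HD(\Phi)$. A cover of $\Lambda_\Phi$ at a single scale $\rho=e^{-B}$ yields a bound of the shape $\liminf_{B\to\infty}\sup_{\qq}\delta(\qq,B)$, where $\qq$ ranges over the empirical profiles actually realized by points of the sponge at that scale; what \eqref{HDbounds} requires is $\sup_{\rr\in\RR}\inf_B\delta(\rr,B)$, with the quantifiers in the opposite order and with $\rr$ constrained to be exponentially periodic. To close this, one has to argue pointwise: for each $\omega\in E^\N$, whose empirical measure flow $N\mapsto\what\bfP_N$ is an arbitrary, typically non-periodic, Lipschitz path in $\PP$, one must produce a single exponentially periodic cycle $\rr$ (drawn from a countable dense family, so that $\sigma$-stability of Hausdorff dimension applies) together with infinitely many scales along which $\what\bfR_M$ matches $\what\bfP_M$ on a whole window of depths $M$, whence $\underline\pdim(\pi(\omega),\nu_\rr)\leq\delta(\rr)+\epsilon$ and $\pi(\omega)$ lies in the exceptional set controlled by the Rogers--Taylor theorem. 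Producing that periodic matching is the technical heart of the paper's proof (Claim \ref{claimexistsr}): one passes to a subsequential limit $\what\bfQ_B$ of the empirical flow and applies Arzela--Ascoli to the family of translates of the $2$-Lipschitz path $t\mapsto\what\bfQ_{\exp(t)}$ to locate approximate periods, then interpolates to an exactly periodic cycle. Your appeal to ``compactness of $\RR_\lambda$'' cannot substitute for this recurrence step, because the empirical profiles being compactified are not elements of any $\RR_\lambda$ to begin with; without it the covering argument controls only a box-counting-type quantity and does not resolve the sup--inf interchange.

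A second, more local, issue: in the lower bound of \eqref{HDbounds} your regularization $\rr^{(\epsilon)}_b=(1-\epsilon)\rr_b+\epsilon\eta$ need not be good. Goodness of a measure constrains only the sets $I(\pp,x)$, i.e.\ the initial segments of the ordering of the Lyapunov exponents with ties grouped together; mixing in the uniform measure can break a tie $\chi_i(\what\bfR_B)=\chi_{i'}(\what\bfR_B)$ and create a new initial segment, realized for a whole interval of $x$ (not a negligible set), that was never required to be good. The paper avoids this by using the perturbation $\ss_b=(1-\epsilon)\rr_{b^{1-\epsilon}}+\epsilon\what\bfR_{b^{1-\epsilon}}$, for which $\what\bfS_B=\what\bfR_{B^{1-\epsilon}}$ is literally one of the original averaged measures and hence good by hypothesis. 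By contrast, your route to the upper bound in \eqref{DDbounds} --- ergodic decomposition, the Feng--Hu formula, and the comparison of dynamical with Shannon conditional entropies against the decreasing weights $1/\chi_i$ --- is a legitimate alternative in outline, though it outsources the main estimate to an external theorem, whereas the paper obtains it as the constant-cycle case of the same pointwise argument sketched above.
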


\begin{corollary}[Generalization of Theorems \ref{theoremDDbernoulli} and \ref{theoremHDcompute}]
\label{corollarygood}
Let $\Lambda_\Phi$ be a good sponge. Then
\begin{align*}
\HD(\Phi)
&= \sup_{\rr\in\RR}\delta(\rr),&
\DynD(\Phi)
&= \sup_{\pp\in\PP} \delta(\pp).
\end{align*}
\end{corollary}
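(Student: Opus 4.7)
This corollary is an immediate consequence of Theorem \ref{theoremgood2}. The plan is simply to observe that when $\Lambda_\Phi$ is a good sponge, the ``good'' restrictions appearing in the lower bounds of \eqref{HDbounds} and \eqref{DDbounds} become vacuous, so that the outer suprema in each chain coincide and force equality throughout.

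In more detail, by Definition \ref{definitiongood} the hypothesis that $\Lambda_\Phi$ is good is precisely the statement that every $\pp \in \PP$ is good. Since a cycle $\rr \in \RR$ is good by definition whenever all the measures $\what\bfR_B \in \PP$ $(B > 0)$ are good, it follows that every $\rr \in \RR$ is also automatically good. Consequently
\begin{align*}
\sup_{\substack{\rr\in\RR \\ \text{good}}}\delta(\rr) &= \sup_{\rr\in\RR} \delta(\rr), &
\sup_{\substack{\pp\in\PP \\ \text{good}}} \delta(\pp) &= \sup_{\pp\in\PP} \delta(\pp).
\end{align*}
Substituting these two identities into \eqref{HDbounds} and \eqref{DDbounds} collapses each chain of inequalities to the required equalities $\HD(\Phi) = \sup_{\rr\in\RR}\delta(\rr)$ and $\DynD(\Phi) = \sup_{\pp\in\PP}\delta(\pp)$.

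There is no genuine obstacle in this argument; the entire content is packaged into Theorem \ref{theoremgood2} (and, through it, into Theorem \ref{theoremgood1} for the equality $\HD(\nu_\rr) = \delta(\rr)$ in the good-and-nondegenerate case). The corollary simply records that goodness of the sponge is exactly the condition under which the projection-type separation hypothesis needed to compute $\HD(\nu_\rr)$ via $\delta(\rr)$ holds uniformly over all cycles, so that the upper and lower bounds of Theorem \ref{theoremgood2} must coincide.
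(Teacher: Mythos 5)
Your proposal is correct and matches the paper's (implicit) argument exactly: Definition \ref{definitiongood} already notes that a good sponge makes all measures, and hence all cycles, good, so the lower and upper bounds in \eqref{HDbounds} and \eqref{DDbounds} coincide and Theorem \ref{theoremgood2} immediately yields the two equalities.
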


\begin{remark}
In some cases, Theorem \ref{theoremgood2} can still be used to compute the Hausdorff and dynamical dimensions of a sponge $\Lambda_\Phi$ even if that sponge is not good. This is because as long as the supremum of $\delta$ is attained at a good measure (resp. good cycle), then the dynamical (resp. Hausdorff) dimension of $\Lambda_\Phi$ is equal to the dimension of this measure (resp. cycle), regardless of whether or not other measures (resp. cycles) are good.
\end{remark}

Using the terminology of this section, we can also generalize the framework of Lalley and Gatzouras \cite{LalleyGatzouras} to higher dimensions:

\begin{definition}
\label{definitionLG}
A sponge $\Lambda_\Phi$ will be called \emph{Lalley--Gatzouras} if it satisfies the coordinate ordering condition with respect to some permutation $\sigma$ of $D$, such that the sets $\sigma(I_{\leq i})$ ($i\in D$) are all good. Equivalently, a sponge is Lalley--Gatzouras if it is good and satisfies the coordinate ordering condition. 
\end{definition}

We do not prove any theorems specifically about Lalley--Gatzouras sponges, since they do not seem to behave any differently from general good sponges. However, it is worth noting that since all Lalley--Gatzouras sponges are good, all our theorems about good sponges apply to them, so that we are truly generalizing the framework of \cite{LalleyGatzouras} as well as the framework of \cite{Baranski}. We also note that the sponge of Theorem \ref{theoremcounterexample} is a Lalley--Gatzouras sponge, since it is a Bara\'nski sponge that satisfies the coordinate ordering condition.

\section{Dimensions of pseudo-Bernoulli measures}
\label{sectionQBdim}

In this section we compute the Hausdorff dimension of pseudo-Bernoulli measures, proving Theorem \ref{theoremgood1} (which implies Theorem \ref{theoremPBcompute}) and Proposition \ref{propositionXinc}. Our main tool will be the Rogers--Taylor density theorem, a well-known formula for computing the Hausdorff dimension of a measure:

\begin{theorem}[\cite{RogersTaylor}]
\label{theoremRT}
If $\mu$ is a probability measure on $\R^d$ and $S \subset \R^d$ is a set of positive $\mu$-measure, then
\[
\inf_{\xx\in S} \underline\pdim(\xx,\mu)
\leq \HD(S)
\leq \sup_{\xx\in S} \underline\pdim(\xx,\mu),
\]
where
\[
\underline\pdim(\xx,\mu) \df \liminf_{\rho\to 0} \frac{\log\mu(B(\xx,\rho))}{\log(\rho)}
\]
is the lower pointwise dimension of $\mu$ at $\xx$. In particular,
\[
\HD(\mu) = \esssup_{\xx\in\R^d} \underline\pdim(\xx,\mu).
\]
\end{theorem}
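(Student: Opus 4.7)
\medskip

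The plan is to prove the two inequalities separately using standard covering/mass‐distribution arguments, and then deduce the ``in particular'' clause about $\HD(\mu)$.

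\textbf{Upper bound.} Let $t = \sup_{\xx\in S}\underline\pdim(\xx,\mu)$ and fix $s > t$. For each $\xx\in S$, the definition of lower pointwise dimension gives that $\log\mu(B(\xx,\rho))/\log\rho < s$ for arbitrarily small $\rho > 0$, equivalently $\mu(B(\xx,\rho)) > \rho^s$. Hence for every $\varepsilon > 0$ the collection
\[
\mathcal V_\varepsilon \df \big\{B(\xx,\rho) : \xx \in S,\ 0 < \rho < \varepsilon,\ \mu(B(\xx,\rho)) > \rho^s\big\}
\]
is a Vitali cover of $S$. Applying the Besicovitch covering theorem (or the $5B$-Vitali covering lemma), extract a countable subfamily with a bounded-overlap constant $C_d$ depending only on $d$ that still covers $S$; call the selected balls $B(\xx_i,\rho_i)$. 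Then
\[
\sum_i \rho_i^s \;\leq\; \sum_i \mu\big(B(\xx_i,\rho_i)\big) \;\leq\; C_d\,\mu(\mathbb R^d) \;=\; C_d,
\]
which shows $\mathcal H^s_{C_d\varepsilon}(S) \leq C_d'$ independently of $\varepsilon$. Letting $\varepsilon\to 0$ yields $\mathcal H^s(S) < \infty$, hence $\HD(S) \leq s$; since $s > t$ was arbitrary, $\HD(S) \leq t$.

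\textbf{Lower bound.} Let $t = \inf_{\xx\in S}\underline\pdim(\xx,\mu)$ and fix $s < t$. For each $\xx\in S$ there is some $\rho_0(\xx) > 0$ with $\mu(B(\xx,\rho)) \leq \rho^s$ for all $\rho < \rho_0(\xx)$. Writing $S = \bigcup_{n\in\N} S_n$ with $S_n \df \{\xx\in S : \rho_0(\xx) \geq 1/n\}$, countable additivity picks out some $n$ with $\mu(S_n) > 0$. For any countable cover $\{U_i\}$ of $S_n$ with $\operatorname{diam}(U_i) < 1/n$ and $U_i\cap S_n\neq\emptyset$, choose $\xx_i\in U_i\cap S_n$; then $U_i \subset B(\xx_i,\operatorname{diam}(U_i))$, so
\[
\sum_i (\operatorname{diam} U_i)^s \;\geq\; \sum_i \mu\big(B(\xx_i,\operatorname{diam}(U_i))\big) \;\geq\; \mu(S_n) > 0.
\]
Thus $\mathcal H^s(S_n) \geq \mu(S_n) > 0$, giving $\HD(S) \geq \HD(S_n) \geq s$. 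Letting $s\nearrow t$ completes this direction. This is the standard ``mass distribution principle''.

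\textbf{The essential‐supremum statement.} Let $\alpha = \esssup \underline\pdim(\cdot,\mu)$. For $\beta > \alpha$ the set $S_\beta \df \{\xx:\underline\pdim(\xx,\mu) < \beta\}$ has full $\mu$-measure, and the upper bound gives $\HD(S_\beta)\leq \beta$; hence $\HD(\mu) \leq \alpha$. Conversely, for $\beta < \alpha$ the set $T_\beta \df \{\xx : \underline\pdim(\xx,\mu) > \beta\}$ has positive $\mu$-measure, so any $S$ of full $\mu$-measure meets $T_\beta$ in a set of positive $\mu$-measure, whence by the lower bound $\HD(S)\geq \HD(S\cap T_\beta) \geq \beta$. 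Taking infimum over such $S$ gives $\HD(\mu)\geq \alpha$.

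\textbf{Main obstacle.} The only nontrivial ingredient is invoking a covering theorem (Besicovitch or $5B$-Vitali) in the upper bound; once the selection of bounded-overlap balls is in hand, the two inequalities become elementary estimates on sums of radii. The lower bound uses no covering theorem at all, only the definition of Hausdorff measure and a truncation of $S$ to obtain uniform control on $\rho_0(\xx)$.
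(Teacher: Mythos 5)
Your proof is correct. Note that the paper does not prove this statement at all --- it is quoted as a known result of Rogers--Taylor --- so there is no internal argument to compare against; what you have written is the standard self-contained derivation (Besicovitch covering for the upper bound, the mass distribution principle for the lower bound, and the routine esssup bookkeeping), and all the steps check out. Two cosmetic points: in the upper bound the covering balls have diameter $2\rho_i<2\varepsilon$, so the correct conclusion is $\mathcal H^s_{2\varepsilon}(S)\leq 2^sC_d$ rather than the constants as written; and in the lower bound the sets $S_n$ need not be $\mu$-measurable, so one should work with outer measure (countable subadditivity still yields some $n$ with $\mu^*(S_n)>0$, and the mass distribution estimate goes through verbatim). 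Neither affects the validity of the argument.
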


We prove Proposition \ref{propositionXinc} first, since it will be used in the proof of Theorem \ref{theoremgood1}. We need a lemma, which will also be used in the proof of Theorem \ref{theoremcounterexample}:

\begin{lemma}[Near-linearity of entropy]
\label{lemmaentropy}
Let $J$ be a finite set, let $(q_j)_{j\in J}$ be a probability vector, and let $(\pp_j)_{j\in J}$ be a family of elements of $\PP$. Then for all $I \subset I' \subset D$,
\begin{equation}
\label{hIlinear}
\sum_{j\in J} q_j h(I'\given I;\pp_j)
\leq h\left(I'\given I ; \sum_{j\in J} q_j \pp_j\right)
\leq \sum_{j\in J} q_j h(I'\given I;\pp_j) + \log\#(J).
\end{equation}
\end{lemma}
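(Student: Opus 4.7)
The approach is to reinterpret $h(I;\pp)$ as a Shannon entropy, reducing both inequalities to standard information-theoretic facts about concavity and subadditivity of entropy. Let $\pi_I:E\to A_I$ denote the projection onto the $I$-coordinates, and write $H(\cdot)$ for Shannon entropy. Since $\pp([\aa]_I)$ is precisely the mass placed by $(\pi_I)_*\pp$ on the atom $\pi_I\aa$, unwinding the definitions gives
\[
h(I;\pp) = H((\pi_I)_*\pp),
\]
and, because $I \subset I'$ implies that $\pi_I$ is a deterministic function of $\pi_{I'}$, the quantity $h(I'\given I;\pp) = h(I';\pp) - h(I;\pp)$ coincides with the Shannon conditional entropy $H(\pi_{I'}\mid\pi_I)$ computed with respect to $\pp$.

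Next I would set up the standard joint distribution on $\{1,\ldots,\#(J)\}\times E$ that assigns mass $q_j\pp_j(\aa)$ to $(j,\aa)$; under it, the index variable has law $(q_j)_{j\in J}$, the $\aa$-marginal is $\sum_j q_j\pp_j$, and conditionally on $J=j$ the variable $\aa$ is distributed as $\pp_j$. For the left (concavity) inequality, conditioning reduces entropy gives
\[
h\Bigl(I'\given I;\,\sum_j q_j\pp_j\Bigr) = H(\pi_{I'}\mid\pi_I) \;\geq\; H(\pi_{I'}\mid\pi_I,J) = \sum_j q_j\,h(I'\given I;\pp_j),
\]
where the last equality uses the conditional law of $\aa$ given $J=j$. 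For the right inequality I would combine the two standard bounds
\[
H(\pi_{I'}) \leq H(\pi_{I'},J) = H(J) + \sum_j q_j H((\pi_{I'})_*\pp_j), \qquad H(\pi_I) \geq H(\pi_I\mid J) = \sum_j q_j H((\pi_I)_*\pp_j),
\]
subtract them (noting that $I \subset I'$ makes the first an upper bound and the second a lower bound on the respective entropies under $\sum_j q_j \pp_j$), and absorb the residual term using $H(J) \leq \log\#(J)$.

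The main obstacle is purely notational: once the identification $h(I;\pp) = H((\pi_I)_*\pp)$ is in hand, both inequalities are textbook consequences of the chain rule and the monotonicity of entropy under conditioning. The geometry of the underlying IFS plays no role; the lemma is a measure-theoretic near-linearity fact about Shannon entropy that enters the rest of the paper only as an input to more substantive estimates, notably in the proof of Theorem~\ref{theoremcounterexample}.
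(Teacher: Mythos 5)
Your proof is correct and follows essentially the same route as the paper's: both pass to the joint distribution $\sum_j q_j\,\delta_j\times\pp_j$ on $J\times E$ and reduce the two inequalities to standard facts about conditional Shannon entropy (conditioning reduces entropy for the lower bound, the chain rule plus $H(J)\leq\log\#(J)$ for the upper bound). The paper merely phrases this in the language of partitions $\AA,\BB,\CC$ and cites the two inequalities $H_\pp(\AA\given\BB\vee\CC)\leq H_\pp(\AA\given\BB)\leq H_\pp(\AA\given\BB\vee\CC)+\log\#(\CC)$ from a reference, whereas you spell them out.
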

\begin{proof}
Let $\pp$ be the probability measure on $J\times E$ given by the formula $\pp = \sum_{j\in J} q_j \delta_j\times\pp_j$, where $\delta_j$ denotes the Dirac point measure at $j$. Consider the partitions on $J\times E$ given by the formulas
\begin{align*}
\AA &\df \{J\times [\aa]_{I'} : \aa\in E\},&
\BB &\df \{J\times [\aa]_I : \aa\in E\},&
\CC &\df \{\{j\}\times E : j\in J\}.
\end{align*}
Then \eqref{hIlinear} is equivalent to the inequalities
\[
H_\pp(\AA \given \BB\vee\CC) \leq H_\pp(\AA\given\BB) \leq H_\pp(\AA\given\BB\vee\CC) + \log\#(\CC),
\]
where $H_\pp(\cdot\given\cdot)$ denotes the standard conditional entropy of two partitions. These inequalities follow from well-known facts about entropy, see e.g. \cite[Theorem 2.3.3(f)]{PrzytyckiUrbanski}.
\end{proof}

\begin{corollary}
\label{corollaryentropy}
Let $J$ be a Borel measurable space, let $\qq$ be a probability measure on $J$, and let $(\pp_j)_{j\in J}$ be a family of elements of $\PP$. Then for all $I \subset I' \subset D$,
\[
\int h(I'\given I;\pp_j) \;\dee \qq(j)
\leq h\left(I'\given I ; \int \pp_j \;\dee \qq(j)\right).
\]
\end{corollary}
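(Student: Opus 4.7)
The strategy is to recognize the inequality as a Jensen-type inequality for a concave function on the finite-dimensional simplex $\PP$, with Lemma \ref{lemmaentropy} supplying the concavity. First, I would note that the map $F:\PP\to\R$ defined by $F(\pp) = h(I'\given I;\pp)$ is continuous: expanded out, it is a finite sum of terms of the form $-x\log x$ and $y\log y$ evaluated at the masses $\pp([\aa]_I)$ and $\pp([\aa]_{I'})$, all of which depend continuously on $\pp\in\PP$.

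Next, I would observe that the first inequality in Lemma \ref{lemmaentropy}, applied to arbitrary finite probability vectors, asserts exactly
\[
\sum_{j\in J_0} q_j F(\pp_j) \leq F\left(\sum_{j\in J_0} q_j \pp_j\right)
\]
for every finite $J_0$, probability vector $(q_j)_{j\in J_0}$, and family $(\pp_j)_{j\in J_0}$ in $\PP$. Together with the continuity from the previous step, this is the standard definition of concavity of $F$ on the finite-dimensional convex set $\PP$.

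Having established that $F$ is continuous and concave on $\PP$, the corollary follows from Jensen's inequality in its standard form for Borel probability measures and Borel measurable families with values in a finite-dimensional convex set: for any Borel probability measure $\qq$ on $J$,
\[
\int F(\pp_j)\;\dee\qq(j) \leq F\left(\int \pp_j\;\dee\qq(j)\right),
\]
which is precisely the claim. Should one prefer to avoid invoking the general measure-theoretic Jensen inequality, the equivalent route is to approximate $\qq$ weakly by finitely supported discrete measures, apply Lemma \ref{lemmaentropy} to each approximant, and take a limit using the continuity of $F$ and of the barycenter map $\qq\mapsto \int \pp_j\;\dee\qq(j)$; since $\PP$ is finite-dimensional and compact, this approximation is routine. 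Either way there is no serious obstacle: the work is essentially entirely absorbed into Lemma \ref{lemmaentropy}, and the only item requiring any thought is the passage from finite convex combinations to integrals against an arbitrary probability measure.
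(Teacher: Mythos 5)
Your proposal is correct and matches the paper's argument in substance: the paper likewise extracts concavity of $\pp\mapsto h(I'\given I;\pp)$ from the first inequality of Lemma \ref{lemmaentropy} and then passes from finite convex combinations to the integral by a limiting argument (refining a finite partition of $J$ to points, which is the same device as your discrete approximation or, equivalently, the measure-theoretic Jensen inequality). The only point worth a moment's care in your alternative route is that weak approximation of $\qq$ by discrete measures interacts with a merely measurable family $j\mapsto\pp_j$; pushing $\qq$ forward to a measure on the compact simplex $\PP$ (or using the paper's martingale-style partition refinement) removes any issue.
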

\begin{proof}
If $\AA$ is a finite partition of $J$, then Lemma \ref{lemmaentropy} shows that
\[
\sum_{A\in \AA} h\left(I'\given I ; \frac{1}{\qq(A)} \int_A \pp_j \;\dee\qq(j) \right) \qq(A) \leq h\left(I'\given I ; \int \pp_j \;\dee \qq(j)\right).
\]
Letting $\AA$ tend to the partition of $J$ into points completes the proof.
\end{proof}

\begin{proof}[Proof of Proposition \ref{propositionXinc}]
Write $B_{d + 1} = 0$, so that $B_1 \geq \cdots \geq B_{d + 1}$. Then
\[
\{i\in D : b \leq B_i\} = I_{\leq j} \all j = 1,\ldots,d \all b\in (B_{j + 1},B_j),
\]
and $\{i\in D : b \leq B_i\} = \emptyset$ for all $b > B_1$. Thus
\begin{align*}
B\delta(\rr,B)
&= \int h(\{i\in D : b \leq B_i\};\rr_b) \;\dee b\\
&= \sum_{i = 1}^d \int_{B_{i + 1}}^{B_i} h(I_{\leq i};\rr_b)\;\dee b\\
&= \sum_{i = 1}^d \int_0^{B_i} h(I_{\leq i};\rr_b)\;\dee b
- \sum_{i = 2}^{d + 1} \int_0^{B_i} h(I_{\leq i - 1};\rr_b)\;\dee b\\
&= \sum_{i = 1}^d \int_0^{B_i} h(I_{\leq i} \given I_{\leq i - 1};\rr_b)\;\dee b\\
&\leq \sum_{i = 1}^d B_i h(I_{\leq i} \given I_{\leq i - 1};\what\bfR_B). \by{Corollary \ref{corollaryentropy}}
\end{align*}
Dividing by $B$ and then applying \eqref{Bidef} yields \eqref{deltarB2}. Considering the special case where $\rr$ is constant yields \eqref{deltap2}.
\end{proof}

\begin{proof}[Proof of Theorem \ref{theoremgood1}]
For convenience, in this proof we use the max norm on $\R^d$. Fix $\rr\in\RR$, and let $\omega_1,\omega_2,\ldots$ be a sequence of $E$-valued independent random variables, such that the distribution of $\omega_n$ is $\rr_n$. Then $\omega = \omega_1\omega_2\cdots$ is an $E^\N$-valued random variable with distribution $\mu_\rr$. For each $i\in D$, consider the sequence of random variables
\[
\big(-\log|\phi_{\omega_n,i}'|\big)_{n\in\N}
\]
and for each $I\subset D$, consider the sequence of random variables
\[
\big(-\log \rr_n([\omega_n]_I)\big)_{n\in\N}
\]
(cf. \eqref{aIdef}). Each of these sequences is a sequence of independent random variables with uniformly bounded variance,\Footnote{The variance of $-\log\rr_n([\omega_n]_I)$ is at most $\#(E) \max_{x\in [0,1]} x\log^2(x)$.} so by \cite[Corollary A.8]{BenoistQuint_book}\Footnote{This is called Corollary 1.8 in the appendix of the preprint version of \cite{BenoistQuint_book}.} the law of large numbers holds for these sequences, i.e.
\begin{align*}
-\sum_{n = 1}^N \log|\phi_{\omega_n,i}'| &= \sum_{n = 1}^N \chi_i(\rr_n) + o(N)\\
-\sum_{n = 1}^N \log \rr_n([\omega_n]_I) &= \sum_{n = 1}^N h_I(\rr_n) + o(N)
\end{align*}
almost surely. Moreover, since $\rr\in\RR$, we have
\[
\sup_{\substack{b,b' \geq B \\ |b - b'| \leq 1}} \|\rr_{b'} - \rr_b\| \tendsto{B\to\infty} 0,
\]
where $\|\cdot\|$ is any norm on the space of measures of $E$. Since the functions $\chi_i$ ($i\in D$) and $h_I$ ($I\subset D$) are continuous, this implies that
\begin{align*}
\sum_{n = 1}^N \chi_i(\rr_n) &= \int_0^N \chi_i(\rr_b) \;\dee b + o(N)\\
\sum_{n = 1}^N h_I(\rr_n) &= \int_0^N h_I(\rr_b) \;\dee b + o(N).
\end{align*}
Now let us introduce the notation
\begin{align*}
X_i(\omega\given N) &= -\log|\phi_{\omega\given N,i}'|\\
[\omega\given N]_I &= \{\tau\in E^\N : \tau_n \in [\omega_n]_I \all n \leq N\},
\end{align*}
so that
\begin{align*}
X_i(\omega\given N) = -\sum_{n = 1}^N \log|\phi_{\omega_n,i}'| &= \chi_i(\bfR_N) + o(N)\\
-\log \mu_\rr([\omega\given N]_I) = -\sum_{n = 1}^N \log \rr_n([\omega_n]_I) &= \int_0^N h_I(\rr_b) \;\dee b + o(N).
\end{align*}
For all $N_1,\ldots,N_d \in \N$, write
\begin{equation}
\label{Bomegadef}
B_\omega(N_1,\ldots,N_d) \df \bigcap_{i\in D} [\omega\given {N_i}]_{\{i\}},
\end{equation}
and note that
\[
\diam\big(\pi\big(B_\omega(N_1,\ldots,N_d)\big)\big) \leq \max_{i\in D} \exp(-X_i(\omega\given {N_i}))
\]
since we are using the max norm. Now let $\rho > 0$ be a small number, let $B = -\log(\rho)$, and let $B_1,\ldots,B_d > 0$ be given by \eqref{Bidef}. Without loss of generality suppose that $B_1 \geq \cdots \geq B_d$.

We proceed to prove that $\HD(\nu_\rr) \leq \delta(\rr)$. Fix $\epsilon > 0$, and for each $i\in D$ let $N_i = \lfloor (1 + \epsilon) B_i\rfloor$. Then if $B$ is sufficiently large (depending on $\epsilon$), then
\[
X_i(\omega\given {N_i}) \geq \chi_i(\bfR_{B_i}) = B = -\log(\rho) \all i\in D,
\]
and thus
\[
\pi\big(B_\omega(N_1,\ldots,N_d)\big) \subset B(\pi(\omega),\rho).
\]
So
\begin{align*}
-\log \nu_\rr\big(B(\pi(\omega),\rho)\big)
&\leq -\log\mu_\rr\big(B_\omega(N_1,\ldots,N_d)\big)
= -\sum_{n\in\N} \log\rr_n([\omega_n]_{\{i\in D : n\leq N_i\}}) \noreason\\
&= -\sum_{i\in D} \sum_{n = N_{i + 1} + 1}^{N_i} \log\rr_n([\omega_n]_{I_{\leq i}}) \note{with $N_{d + 1} \df 0$}\\
&= \sum_{i\in D} \int_{N_{i + 1}}^{N_i} h(I_{\leq i};\rr_b) \;\dee b + o(N_i)\\
&= \sum_{i\in D} \int_0^{N_i} h(I_{\leq i}\given I_{\leq i - 1};\rr_b) \;\dee b + o(B)\\
&= \sum_{i\in D} \int_0^{B_i} h(I_{\leq i}\given I_{\leq i - 1};\rr_b) \;\dee b + O(\epsilon B) + o(B) \noreason\\
&= B[\delta(\rr,B) + O(\epsilon) + o(1)] \by{Proposition \ref{propositionXinc}}
\end{align*}
and thus
\[
\frac{\log \nu_\rr\big(B(\pi(\omega),\rho)\big)}{\log(\rho)}
\leq \delta(\rr,B) + O(\epsilon) + o(1).
\]
Letting $B\to\infty$ (i.e. $\rho\to 0$) and then $\epsilon \to 0$, we get
\[
\underline\pdim(\pi(\omega),\nu_\rr) \leq \liminf_{B\to \infty} \delta(\rr,B),
\]
where $\underline\pdim$ is as in Theorem \ref{theoremRT}. But since $\rr$ is exponentially periodic, so is $B\mapsto \delta(\rr,B)$, and thus
\[
\liminf_{B\to \infty} \delta(\rr,B) = \inf_{B\in [1,\lambda]} \delta(\rr,B) = \delta(\rr).
\]
Combining with Theorem \ref{theoremRT} proves that $\HD(\nu_\rr) \leq \delta(\rr)$.

Now suppose that $\rr$ is good and nondegenerate, and we will show that $\HD(\nu_\rr) \geq \delta(\rr)$. Without loss of generality assume that $E_\rr = E$. Consider the numbers $N_i \df \lfloor (1 - \epsilon) B_i\rfloor$ ($i\in D$). We will show that
\begin{equation}
\label{ETSQB}
\pi^{-1}\big(B(\pi(\omega),\rho)\big) \subset B_\omega(N_1,\ldots,N_d) \text{ for all $B$ sufficiently large}
\end{equation}
almost surely. By the preceding calculations, this suffices to finish the proof.

We consider the auxiliary numbers $M_i \df \lfloor (1 - \epsilon/2) B_i\rfloor$ ($i\in D$). We also let
\begin{align*}
\epsilon_0 &= \min_{i\in D} \min_{x\in \{0,1\}} \min_{\substack{a\in A_i \\ x\notin \phi_{i,a}([0,1])}} \dist\big(x,\phi_{i,a}([0,1])\big),&
C &= -\log(\epsilon_0).
\end{align*}
If $B$ is sufficiently large (depending on $\epsilon$), then
\begin{equation}
\label{Xibound}
X_i(\omega\given {M_i}) < \chi_i(\bfR_{B_i}) - C = B - C = -\log(\rho/\epsilon_0) \all i\in D.
\end{equation}
Now fix $i\in D$, and consider the sequence of random events
\[
\big(E_n(i) \df \big[\phi_{\omega_n,i}\circ\phi_{\omega_{n + 1},i}([0,1]) \subset (0,1)\big]\big)_{n\in\N}.
\]
These events are not independent, but the subsequences corresponding to even and odd indices are both sequences of independent events. So again by \cite[Corollary 1.8 in the Appendix]{BenoistQuint_book}, we have
\[
\#\{n \leq N : E_n(i) \text{ holds}\} = \sum_{n = 1}^N p_n + o(N),
\]
almost surely, where $p_n$ is the probability of $E_n$. In particular, for all $j\in D$
\[
\#\{N_j < n < M_j : E_n(i) \text{ holds}\} = \sum_{n = N_j}^{M_j} p_n + o(M_j).
\]
Letting
\begin{equation}
\label{fdef}
f(\pp) = \pp\times\pp(\{(\aa,\bb)\in E^2 : \phi_{\aa,i}\circ\phi_{\bb,i}([0,1]) \subset (0,1)\}),
\end{equation}
we have $p_n = f(\rr_n) + o(1)$ and thus
\[
\#\{N_j < n < M_j : E_n(i) \text{ holds}\} = \int_{(1 - \epsilon)B_j}^{(1 - \epsilon/2)B_j} f(\rr_b) \;\dee b + o(B).
\]
Now without loss of generality suppose that $\phi_{\aa,i} \circ \phi_{\bb,i}([0,1]) \subset (0,1)$ for some $\aa,\bb\in E$. (If not, then there exists $x \in \{0,1\}$ such that $\phi_{\aa,i}(x) = x$ for all $\aa\in E$, in which case the coordinate $i$ can be ignored since its value is constant over the entire sponge $\Lambda_\Phi$.) Then $f(\pp) > 0$ for all $\pp\in \PP$ such that $\pp(\aa) > 0$ for all $\aa\in E = E_\rr$. So since $\rr$ is nondegenerate, we have
\[
\int_{(1 - \epsilon)B_i}^{(1 - \epsilon/2)B_i} f(\rr_b) \;\dee b \geq \delta B
\]
for some $\delta > 0$ depending on $\epsilon$. So we have
\begin{equation}
\label{Enholds}
\{N_j < n < M_j : E_n(i) \text{ holds}\} \neq \emptyset
\end{equation}
for all $B$ sufficiently large (depending on $\epsilon$).

Now fix $\tau \in E^\N$ such that $\pi(\tau) \in B(\pi(\omega),\rho)$, and we will show that $\tau \in B_\omega(N_1,\ldots,N_d)$. Indeed, by contradiction, suppose that $\tau \notin [\omega\given {N_j}]_{\{j\}}$ for some $j\in D$, and let
\[
I = I(\what\bfR_{B_j},B/B_j) = \{i\in D : B_i \geq B_j\}
\]
(cf. \eqref{Ipx}). Since $\rr$ is good, so is $I$. Moreover, since $j\in I$, we have $\tau \notin [\omega\given {N_j}]_I$. Write $N = N_j$ and $M = M_j$. Then
\begin{align*}
\rho \geq \dist(\pi_I(\omega),\pi_I(\tau))
&\geq \dist\big(\phi_{\omega\given M,I}([0,1]^I),\R^I\butnot \phi_{\omega\given N,I}((0,1)^I)\big) \since{$I$ is good}\\
&\geq \epsilon_0 \min_{i\in I} \big|\phi_{\omega\given M,i}'\big| \by{\eqref{Enholds}}\\
&= \epsilon_0 \exp\big(-\max_{i\in I} X_i(\omega\given {M_j})\big)\\
&\geq \epsilon_0 \exp\big(-\max_{i\in I} X_i(\omega\given {M_i})\big), \since{$B_i \geq B_j \all i\in I$}
\end{align*}
which contradicts \eqref{Xibound}. This demonstrates \eqref{ETSQB}, completing the proof. 
\end{proof}

\section{Hausdorff and dynamical dimensions of self-affine sponges}
\label{sectionHDDD}

In this section we compute the Hausdorff and dynamical dimensions of a self-affine sponge by proving Theorem \ref{theoremgood2}, which implies Theorems \ref{theoremDDbernoulli} and \ref{theoremHDcompute}.

\begin{proof}[Proof of Theorem \ref{theoremgood2}]
Let $\rr\in\RR$ be a good cycle. Fix $0 < \epsilon < 1$, and let
\[
\ss_b = (1 - \epsilon)\rr_{b^{1 - \epsilon}} + \epsilon \what\bfR_{b^{1 - \epsilon}},
\]
so that $\bfS_B = B^\epsilon \bfR_{B^{1 - \epsilon}}$. Since $\rr$ is a good cycle, so is $\ss$. For all $\aa\in E_\ss = E_\rr$ and $b > 0$, we have $\what\bfR_{b^{1 - \epsilon}}(\aa) > 0$ and thus $\ss_b(\aa) > 0$, so $\ss$ is nondegenerate. Thus by Theorem \ref{theoremgood1}, we have
\[
\HD(\Phi) \geq \HD(\nu_\ss) = \delta(\ss) \tendsto{\epsilon \to 0} \delta(\rr).
\]
Taking the supremum over all good $\rr\in\RR$ proves the left-hand inequality of \eqref{HDbounds}. On the other hand, the left-hand inequality of \eqref{DDbounds} is immediate from Theorem \ref{theoremgood1}.

We will now prove the right-hand inequalities of \eqref{HDbounds} and \eqref{DDbounds}. For each $\rr\in\RR$ and $\epsilon > 0$, we let
\[
S_{\rr,\epsilon} = \big\{\xx \in \Lambda_\Phi : \underline\pdim(\xx,\nu_\rr) \leq \delta(\rr) + \epsilon\big\},
\]
where $\delta(\rr)$ denotes the right-hand side of \eqref{deltar}. By Theorem \ref{theoremRT}, we have $\HD(S_{\rr,\epsilon}) \leq \delta(\rr) + \epsilon$. Now for each rational $\lambda \geq 1$ let $\QQ_\lambda$ be a countable dense subset of $\RR_\lambda$, and let $\QQ = \bigcup_{1 \leq \lambda \in \Q} \QQ_\lambda$. Then since Hausdorff dimension is $\sigma$-stable, the sets
\begin{align*}
S_1 &\df \bigcap_{\epsilon > 0} \bigcup_{\rr\in\QQ} S_{\rr,\epsilon}\\
S_2 &\df \bigcap_{\epsilon > 0} \bigcup_{\pp\in\QQ_1} S_{\pp,\epsilon}
\end{align*}
satisfy
\begin{align*}
\HD(S_1) &\leq \sup_{\rr\in\QQ} \delta(\rr),\\
\HD(S_2) &\leq \sup_{\pp\in\QQ_1} \delta(\pp).
\end{align*}
To complete the proof, we need to show that
\begin{align} \label{S1}
\HD(\Phi) &\leq \HD(S_1),\\ \label{S2}
\DynD(\Phi) &\leq \HD(S_2).
\end{align}
We will prove \eqref{S1} first, since afterwards it will be easy to modify the proof to show \eqref{S2}. Fix $\omega\in E^\N$, and we will show that $\pi(\omega) \in S_1$. For each $N\in\N$ let
\begin{align}
\label{PNdef}
\bfP_N &= \sum_{n = 1}^N \delta_{\omega_n},&
\what\bfP_N &= \frac{1}{N} \bfP_N.
\end{align}
If $\pp$ is a signed measure on $E$, then we let
\[
\|\pp\| = \sum_{\aa\in E} |\pp(\aa)|.
\]

\begin{claim}
\label{claimexistsr}
For all $C > 1$ and $\epsilon > 0$, there exist $1 < \lambda \in \Q$ and $\rr \in \QQ_\lambda$ such that for all $B\in [1,\lambda]$,
\begin{equation}
\label{existsr}
\liminf_{k\to\infty}\sup_{M\in [C^{-1} \lambda^k B,C \lambda^k B]} \|\what\bfR_M - \what\bfP_M\| \leq \epsilon.
\end{equation}
Moreover, $\rr$ may be taken so that $\rr_b \in \PP^*$ for all $b > 0$, where
\[
\PP^* \df \{\pp\in\PP : \pp(\aa) > 0 \all \aa \in E\}.
\]
\end{claim}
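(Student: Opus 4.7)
The plan is to use Arzelà–Ascoli compactness to extract a limit profile $g^*$ of the rescaled empirical distributions $t \mapsto \what\bfP_{\lambda^k t}$ along a subsequence $(k_j)$, and then to realize $g^*$ (up to an $O(1 - 1/\lambda)$ error) as the profile of some $\rr \in \QQ_\lambda$ with $\rr_b \in \PP^*$.

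First I extend $M \mapsto \bfP_M$ continuously to all $M \in (0,\infty)$ by linear interpolation between consecutive integers, which gives by a direct calculation the Lipschitz estimate $\|\what\bfP_{M'} - \what\bfP_M\| \le 2(1 - M/M')$ for $0 < M \le M'$. Writing $g_k(t) := \what\bfP_{\lambda^k t}$ for $t \in [C^{-1}, C\lambda]$, the Lipschitz bound makes the family $(g_k)_{k\in\N}$ uniformly Lipschitz in the logarithmic variable $\log t$, so Arzelà–Ascoli produces a subsequence $(k_j)$ along which $g_{k_j}$ converges uniformly to a continuous $g^* : [C^{-1}, C\lambda] \to \PP$. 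Two properties of $g^*$ will be crucial: \emph{(i)} for each $\aa \in E$, the map $t \mapsto t\, g^*(t)(\aa)$ is nondecreasing (since each $\lambda^{-k}\bfP_{\lambda^k t}(\aa)$ is nondecreasing in $t$), and \emph{(ii)} the quantitative almost-periodicity $\|g^*(\lambda t) - g^*(t)\| \le 2(1 - 1/\lambda)$ holds whenever both $t$ and $\lambda t$ lie in the domain (obtained by passing the estimate $\|\what\bfP_{\lambda^{k_j+1} t} - \what\bfP_{\lambda^{k_j} t}\| \le 2(1-1/\lambda)$ to the limit).

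Next I choose $\lambda \in \Q \cap (1, \infty)$ so close to $1$ that the periodicity defect $2(1 - 1/\lambda)$ is much smaller than $\epsilon$, and construct a continuous, $\lambda$-exp-periodic profile $B \mapsto \what\bfR^*_B$ on $(0,\infty)$ that approximates $g^*$ uniformly on $[C^{-1}, C\lambda]$ within $\epsilon/2$ and arises as the profile of some $\rr^* \in \RR_\lambda$ with $\rr^*_b \in \PP^*$. The construction mixes a signal piece equal to $g^*$ on $[1,\lambda]$ but corrected near the endpoints so that $\what\bfR^*_1 = \what\bfR^*_\lambda$ (the correction has magnitude at most $2(1-1/\lambda)$ by \emph{(ii)}), together with a small convex combination with a strictly positive probability measure on $E$; this perturbation both enforces $\rr^*_b \in \PP^*$ and provides a strictly positive buffer in the derivative that dominates any negative contribution coming from the endpoint correction, so that the cumulative $B \mapsto B \what\bfR^*_B$ is nondecreasing component-wise -- the property that makes $\rr^*_b := \frac{d}{db}(b\,\what\bfR^*_b)$ an actual probability measure rather than a signed one. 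Density of $\QQ_\lambda$ in $\RR_\lambda$ then yields $\rr \in \QQ_\lambda$ within a further $\epsilon/4$ of $\rr^*$ in uniform norm, still with $\rr_b \in \PP^*$.

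Verification is bookkeeping: fix $B \in [1,\lambda]$ and $M \in [C^{-1}\lambda^{k_j} B, C\lambda^{k_j} B]$, and set $t := \lambda^{-k_j} M \in [C^{-1}B, CB] \subset [C^{-1}, C\lambda]$. By $\lambda$-exp-periodicity, $\what\bfR_M = \what\bfR_t$, and uniform convergence gives $\what\bfP_M = g_{k_j}(t)$. The triangle inequality bounds
\[
\|\what\bfR_M - \what\bfP_M\| \le \|\what\bfR_t - g^*(t)\| + \|g^*(t) - g_{k_j}(t)\|,
\]
where the first term is $\le \epsilon/2$ by the construction of $\rr$, and the second is $\le \epsilon/2$ for all large $j$. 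Taking $\sup$ over $M$ in the window and then $\liminf$ over $j$ yields the desired bound.

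The main obstacle is constructing $\rr^*$: one must simultaneously enforce $\lambda$-exp-periodicity, component-wise monotonicity of $B \mapsto B\,\what\bfR^*_B$ on every coordinate $\aa \in E$ (which makes $\rr^*_b$ a probability measure rather than a signed one), strict positivity $\rr^*_b \in \PP^*$, and closeness to $g^*$. The tension is real -- naively adjusting $g^*$ near the boundary of $[1,\lambda]$ to make it periodic can destroy monotonicity on coordinates $\aa$ for which $g^*(\cdot)(\aa)$ is small or flat. Property \emph{(i)} provides the slack needed in the derivative to absorb a bounded corrective term, while \emph{(ii)} bounds the size of the correction that must be inserted; the positive buffer then balances the derivative estimates, with the proportions calibrated in terms of $\#(E)$ and $1 - 1/\lambda$.
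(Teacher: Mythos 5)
Your overall architecture (extract a limit profile of the empirical measures by compactness, build an exponentially periodic cycle approximating it, mix in a positive measure for $\PP^*$ and monotonicity, then conclude by the triangle inequality) matches the paper's. But there is a genuine gap at the heart of the construction, and it is not the monotonicity/positivity issue you flag as the "main obstacle" --- it is the approximation of $g^*$ itself. You need an exponentially $\lambda$-periodic profile $\what\bfR$ with $\|\what\bfR_t-g^*(t)\|\leq\epsilon/2$ on the \emph{entire} window $[C^{-1},C\lambda]$. With $\lambda$ close to $1$, that window spans roughly $2\log C/\log\lambda$ periods, and your almost-periodicity estimate (ii) only controls one period-shift at a time. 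Iterating it across the window gives a total drift bound of order $2(1-\lambda^{-m})$ with $\lambda^m\approx C^2$, i.e.\ close to $2$, not close to $0$; the per-period defect $2(1-1/\lambda)$ and the number of periods $\sim 2\log C/\log\lambda$ conspire so that their product stays bounded below as $\lambda\to 1$. And the drift is real: take $E=\{\aa_1,\aa_2\}$ and $\omega$ consisting of alternating constant blocks with super-exponentially growing lengths; then for infinitely many $N$ one has $\what\bfP_N=\delta_{\aa_1}$ while $\what\bfP_{CN}=C^{-1}\delta_{\aa_1}+(1-C^{-1})\delta_{\aa_2}$, so along a suitable subsequence $g^*$ moves by $2(1-C^{-1})$ across a single window. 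No exponentially $\lambda$-periodic function with small $\lambda$ can stay within $\epsilon$ of such a $g^*$ over the whole window, so the object you propose to build does not exist in general.

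The missing idea is that the period must be \emph{large} (at least $C$) and must be a genuine almost-period of the limit profile, found by recurrence rather than by a Lipschitz bound. The paper extracts the limit $\what\bfQ$ along a subsequence of $N$'s first (independently of any $\lambda$), passes to the $2$-Lipschitz path $t\mapsto\qq(t)=\what\bfQ_{\exp(t)}$, and applies Arzel\`a--Ascoli to the family of \emph{translates} $t\mapsto\qq(T+t)$ to produce $T_1,T_2$ with $T_2-T_1\geq\log C$ and $\|\qq(T_2+t)-\qq(T_1+t)\|\leq\epsilon_3$ for all $t\in[-\log C,\log C]$. This gives an almost-period $\lambda_1=e^{T_2-T_1}\geq C$ whose defect $\epsilon_3$ is small \emph{uniformly over a window as wide as the period}, so the periodic extension needs to wrap around only once in each direction to cover $[C^{-1}A_1,C\lambda A_1]$, each wrap costing $\epsilon_3$. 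Your choice to fix $\lambda$ before extracting the limit forecloses this, since the correct $\lambda$ depends on the recurrence structure of the particular $\omega$. (A secondary consequence of the paper's setup is the extra bookkeeping with $x=\lim\lambda^{-k}N$, which your parametrization would avoid --- but only if the main construction worked.) The remaining ingredients of your write-up, including the Lipschitz estimate $\|\what\bfP_{M'}-\what\bfP_M\|\leq 2(1-M/M')$ and the mixing with a strictly positive measure to restore monotonicity of $B\mapsto B\what\bfR_B$, are correct and are also present in the paper's proof.
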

\begin{subproof}
By compactness, there is a sequence of $N$s such that for all $B\in\Q^+$ we have
\begin{equation}
\label{subseq}
\frac{1}{N}\bfP_{N B} \dashrightarrow \bfQ_B,
\end{equation}
where $\dashrightarrow$ indicates convergence along this sequence. Since the map $\Q^+\ni B\mapsto \bfQ_B$ is increasing and uniformly continuous (in fact $1$-Lipschitz), it can be extended to an increasing continuous map $\R^+ \ni B\mapsto \bfQ_B$. Note that $\bfQ_B(E) = B$ for all $B\in\R^+$. Write $\what\bfQ_B = B^{-1}\bfQ_B \in \PP$.

Fix $0 < \epsilon_3 < \epsilon_2 < 1$ small to be determined. For each $t\in\R$ write $\qq(t) = \what\bfQ_{\exp(t)}$. For all $t_2 > t_1$, we have
\begin{align*}
\|\qq(t_2) - \qq(t_1)\|
&= \|e^{-t_2} \bfQ_{\exp(t_2)} - e^{-t_1} \bfQ_{\exp(t_1)}\|\\
&= \|e^{-t_2} (e^{t_1} \aa + (e^{t_2} - e^{t_1}) \bb) - e^{-t_1} (e^{t_1} \aa)\| \note{for some $\aa,\bb\in\PP$}\\
&= \|e^{-t_2} (e^{t_2} - e^{t_1}) (\bb - \aa)\|\\
&\leq 2 e^{-t_2} (e^{t_2} - e^{t_1}) \leq 2 (t_2 - t_1), \since{$\|\aa\| = \|\bb\| = 1$}
\end{align*}
i.e. $\qq$ is $2$-Lipschitz. By the Arzela--Ascoli theorem the collection of all $2$-Lipschitz maps from $\R$ to $\PP$ is compact in the topology of locally uniform convergence. Since the translated paths $t\mapsto \qq(T + t)$ ($T\in\R$) are members of this collection, it follows that there exist $T_1,T_2\in\R$ with $\rho_1 \df T_2 - T_1 \geq \log(C)$, such that for all $t\in [-\log(C),\log(C)]$, $\|\qq(T_2 + t) - \qq(T_1 + t)\| \leq \epsilon_3$. Let $A_1 = \exp(T_1)$, $A_2 = \exp(T_2)$, and $\lambda_1 = A_2/A_1 = \exp(\rho_1) \geq C$. Then
\begin{equation}
\label{A1A2}
\text{for all $B\in [C^{-1},C]$, we have $\|\what\bfQ_{A_2 B} - \what\bfQ_{A_1 B}\| \leq \epsilon_3$.}
\end{equation}
Now for each $B\in [A_1,A_2]$, let $\bfS_B = (1 - \epsilon_2)\bfQ_B + \epsilon_2 B\uu$ and $\what\bfS_B = B^{-1} \bfS_B = (1 - \epsilon_2)\what\bfQ_B + \epsilon_2 \uu$, where $\uu\in \PP$ is the normalized uniform measure on $E$. (We will later define $\bfS_B$ for $B\notin [A_1,A_2]$ as well, but not with this formula.) Let $\delta = \#(E) \epsilon_3/\epsilon_2 > 0$. Then
\begin{align*}
(1 + \delta)\what\bfS_{A_1} - \what\bfS_{A_2}
&= (1 - \epsilon_2)((1 + \delta)\what\bfQ_{A_1} - \what\bfQ_{A_2}) + \delta\epsilon_2 \uu\\
&\geq (1 - \epsilon_2)(\what\bfQ_{A_1} - \what\bfQ_{A_2}) + \delta\epsilon_2 \uu\\
&\geq -(1 - \epsilon_2)\|\what\bfQ_{A_2} - \what\bfQ_{A_1}\|\#(E)\uu + \delta\epsilon_2\uu\\
&\geq -\#(E)\epsilon_3\uu + \delta\epsilon_2\uu = \0. \by{\eqref{A1A2}}
\end{align*}
Let $\lambda \in [(1 + \delta) \lambda_1,(1 + 2 \delta) \lambda_1]$ be a rational number, so that $\lambda \bfS_{A_1} \geq \bfS_{A_2}$. We let $\bfS_{\lambda A_1} = \lambda \bfS_{A_1}$, and we define $B\mapsto \bfS_B$ on the interval $[A_2,\lambda A_1]$ by linear interpolation:
\[
\bfS_B = \bfS_{A_2} + \frac{B - A_2}{\lambda A_1 - A_2}(\lambda \bfS_{A_1} - \bfS_{A_2}) \text{ for all }B\in [A_2,\lambda A_1],
\]
and as before we let $\what\bfS_B = B^{-1} \bfS_B$. Then $\what\bfS_{\lambda A_1} = \what\bfS_{A_1}$, so there is a unique exponentially $\lambda$-periodic extension $\what\bfS:(0,\infty)\to\PP$. We let $\bfS_B = B \what\bfS_B$, and note that $\bfS$ is increasing.

Fix $B\in [\lambda A_1, C\lambda A_1]$. Since $\lambda_1 \geq C$, we have
\begin{eqnarray*}
\what\bfS_B = \what\bfS_{B/\lambda}
&\underset{\epsilon_2\to 0}{\sim_\plus}& \what\bfQ_{B/\lambda}\hspace{1 in} \sincevar{$B/\lambda \in [A_1, C A_1] \subset [A_1,A_2]$}\\
&\underset{\epsilon_3\to 0}{\sim_\plus}& \what\bfQ_{(\lambda_1/\lambda)B} \byvar{\eqref{A1A2}}\\
&\underset{\delta\to 0}{\sim_\plus}& \what\bfQ_B, \sincevar{$1 \leq \lambda/\lambda_1 \leq 1 + 2\delta$}
\end{eqnarray*}
where $X \sim_\plus Y$ means that the distance between $X$ and $Y$ tends to zero as the appropriate limit is taken. Similar logic applies if $B\in [C^{-1} A_1,A_1]$, and the cases $B\in [A_1,A_2]$ and $B\in [A_2,\lambda A_1]$ are even easier. So
\begin{equation}
\label{epsilon2delta}
\sup_{B\in [C^{-1} A_1,C\lambda A_1]} \|\what\bfS_B - \what\bfQ_B\| \tendsto{\epsilon_2,\delta \to 0} 0.
\end{equation}
For each $N$, let $k = k_N \in \N$ be chosen so that $\lambda^{-k} N \in [1,\lambda]$. After extracting a subsequence from the sequence along which \eqref{subseq} converges, we can assume that
\begin{equation}
\label{xdef}
\lambda^{-k} N \dashrightarrow x \in [1,\lambda].
\end{equation}
Now let $\psi:\R\to \Rplus$ be a smooth approximation of the Dirac delta function, let
\[
\what\bfT_{x B} = \int \what\bfS_{e^t B} \psi(t)\;\dee t,
\]
and let $\tt_b = (\del/\del b)[b\what\bfT_b]$. Then $\tt\in\RR_\lambda$, and by choosing $\psi$ appropriately we can guarantee
\begin{equation}
\label{TxB}
\sup_{B > 0} \|\what\bfT_{x B} - \what\bfS_B\| < \epsilon_2.
\end{equation}
Finally, let $\rr\in\QQ_\lambda$ be an approximation of $\tt$, such that $\rr_b \in \PP^*$ for all $b > 0$, and
\begin{equation}
\label{rbtb}
\sup_{b > 0} \|\rr_b - \tt_b\| < \epsilon_2.
\end{equation}
Now fix $B\in [1,\lambda]$, let $N$ be large, and let $k = k_N$. Let $k' \in \Z$ be chosen so that $N A_1 \leq \lambda^{k'} B \leq N \lambda A_1$. Now fix $M \in [C^{-1} \lambda^{k'} B, C \lambda^{k'} B]$, and let $B' = M/N$. By our choice of $k'$, we have $C^{-1} A_1 \leq B' \leq C\lambda A_1$. Thus
\begin{eqnarray*}
\what\bfP_M
= \what\bfP_{N B'}
&\underset{N\dashrightarrow\infty}{\sim_\plus}& \what\bfQ_{B'}
\byvar{\eqref{subseq}}\\
&\underset{\epsilon_2,\delta\to 0}{\sim_\plus}& \what\bfS_{B'}
\byvar{\eqref{epsilon2delta}}\\
&\underset{\epsilon_2 \to 0}{\sim_\plus}& \what\bfR_{x B'}
\byvar{\eqref{TxB} and \eqref{rbtb}}\\
&\underset{N\dashrightarrow\infty}{\sim_\plus}& \what\bfR_{\lambda^{-k} N B'}
= \what\bfR_M,
\byvar{\eqref{xdef}}\\
\end{eqnarray*}
which completes the proof of the claim.
\end{subproof}

Now fix $\lambda > 1$, $B\in [1,\lambda]$, and $k\in\N$. For each $i\in D$, let $N_i = \lfloor \lambda^k B_i\rfloor$, where $B_i$ is given by \eqref{Bidef}. Since $\chi_i$ is bounded from above and below on $\PP$, there exists a constant $C \geq 1$ (independent of $\lambda$, $B$, and $k$) such that $N_i \in [C^{-1} \lambda^k B,C \lambda^k B]$. Fix $\epsilon > 0$ and let $1 < \lambda \in \Q$ and $\rr\in \QQ_\lambda$ be as in Claim \ref{claimexistsr}. Then
\begin{align*}
X_i(\omega\given {N_i})
\; &=_\pt -\sum_{n = 1}^{N_i} \log|\phi_{\omega_n,i}'|
= \chi_i(\bfP_{N_i})\\
&\sim_\times \chi_i(\bfR_{N_i}) \note{as $\epsilon \to 0$}\\
&\sim_\times \chi_i(\bfR_{\lambda^k B_i}) \note{as $k\to\infty$}\\
&=_\pt \lambda^k \chi_i(\bfR_{B_i})
= \lambda^k B, \by{\eqref{Bidef}}
\end{align*}
where $X \sim_\times Y$ means that $X/Y \to 1$ as the appropriate limit is taken. So for some $\delta_2 > 0$ such that $\delta_2 \to 0$ as $\epsilon \to 0$ and $k \to \infty$, we have
\[
X_i(\omega\given {N_i}) \geq (1 - \delta_2)\lambda^k B.
\]
Letting $\rho_k = \exp(-(1 - \delta_2) \lambda^k B)$, we have $B_\omega(N_1,\ldots,N_d) \subset \pi^{-1}(B(\pi(\omega),\rho_k))$ (cf. \eqref{Bomegadef}) and thus
\begin{align}
\label{Xidef}
-\log\nu_\rr\big(B(\pi(\omega),\rho_k)\big)
&\leq -\log\mu_\rr\big(B_\omega(N_1,\ldots,N_d)\big)
= -\sum_{n\in\N} \log\rr_n([\omega_n]_{\{i\in D : n\leq N_i\}}).
\end{align}
In order to estimate the right-hand side, let $\ss:(0,\infty)\to \PP$ be a piecewise constant and exponentially periodic approximation of $\rr$. Let $F$ denote the range of $\ss$, and note that $F$ is finite. Then since $\rr_b \in \PP^*$ for all $b > 0$, we can continue the calculation as follows:
\begin{align*}
&\sim_\times -\sum_{n\in\N} \log\ss_n([\omega_n]_{\{i\in D : n\leq N_i\}}) \note{as $\ss\to\rr$}\\
&=_\pt -\sum_{\smallemptyset \neq I \subset D} \sum_{\tt \in F} \sum_{\substack{n\in\N \\ \ss_n = \tt \\ \{i\in D : n\leq N_i\} = I}} \log\tt([\omega_n]_I).
\end{align*}
Now for each $\emptyset \neq I \subset D$ and $\tt \in F$, the set
\[
\big\{n \geq \epsilon \lambda^k B : \ss_n = \tt, \{i \in D : n \leq N_i\} = I \big\}
\]
can be written as the union of at most $C_2$ disjoint intervals, where $C_2$ depends only on $\epsilon$ and $\ss$. Write this collection of intervals as $\II(I,\tt)$.

We continue the calculation begun in \eqref{Xidef}, using the notation $k\dashrightarrow \infty$ to denote convergence along the sequence tending to the liminf in \eqref{existsr}:
\begin{align*}
&\sim_\times -\sum_{\smallemptyset \neq I \subset D} \sum_{\tt \in F} \sum_{\substack{n\geq \epsilon \lambda^k B \\ \ss_n = \tt \\ \{i\in D : n\leq N_i\} = I}} \log\tt([\omega_n]_I) \note{as $\epsilon\to 0$}\\
&=_\pt -\sum_{\smallemptyset \neq I \subset D} \sum_{\tt\in F} \sum_{\OC{M_1}{M_2} \in \II(I,\tt)} \int \log\tt([\aa]_I) \;\dee[\bfP_{M_2} - \bfP_{M_1}](\aa)
\end{align*}
\begin{align*}
&\sim_\times -\sum_{\smallemptyset \neq I \subset D} \sum_{\tt\in F} \sum_{\OC{M_1}{M_2} \in \II(I,\tt)} \int \log\tt([\aa]_I) \;\dee[\bfR_{M_2} - \bfR_{M_1}](\aa) \note{as $\epsilon \to 0$ and $k\dashrightarrow \infty$}\\
&=_\pt -\sum_{n \geq \epsilon \lambda^k B} \int_n^{n + 1} \int \log\ss_n([\aa]_{\{i\in D : n \leq N_i\}}) \;\dee\rr_b(\aa) \;\dee b\\
&\sim_\times -\sum_{n\in\N} \int_n^{n + 1} \int \log\rr_n([\aa]_{\{i\in D : n \leq N_i\}}) \;\dee\rr_b(\aa) \;\dee b \note{as $\epsilon \to 0$ and $\ss\to\rr$}\\
&\sim_\times -\iint \log\rr_b([\aa]_{\{i\in D : b \leq \lambda^k B_i\}}) \;\dee\rr_b(\aa) \;\dee b \note{as $k\to\infty$}\\
&=_\pt \int h(\{i \in D : b \leq \lambda^k B_i\};\rr_b) \;\dee b
= \lambda^k B \delta(\rr,B).
\end{align*}
Dividing by the asymptotic $\lambda^k B \sim_\times -\log(\rho_k)$ (valid as $\epsilon\to 0$) and letting $k\dashrightarrow\infty$ and $\ss\to\rr$ shows that
\[
\underline\pdim(\pi(\omega),\nu_\rr) \leq \liminf_{k\to\infty} \frac{\log\nu_\rr\big(B(\pi(\omega),\rho_k)\big)}{\log(\rho_k)} \leq (1 + o(1)) \delta(\rr,B),
\]
where the $o(1)$ term decays to zero as $\epsilon \to 0$. Taking the infimum over $B\in [1,\lambda]$ gives
\[
\underline\pdim(\pi(\omega),\nu_\rr) \leq (1 + o(1)) \delta(\rr),
\]
which proves that $\pi(\omega) \in S_1$, demonstrating \eqref{S1}.

Now we prove \eqref{S2}. Let $\Omega$ be the set of all $\omega\in E^\N$ such that the limit $\lim_{N\to\infty} \what\bfP_N$ exists, where $\what\bfP_N \in \PP$ is given by \eqref{PNdef}. By the ergodic theorem, every invariant measure gives full measure to $\Omega$, so $\DynD(\Phi) \leq \HD(\pi(\Omega))$. Now for each $\omega\in \Omega$, we can choose $\rr = \pp \in \QQ_1\cap \PP^*$ satisfying \eqref{existsr}, namely any approximation to the limit $\lim_{N\to\infty} \what\bfP_N$. The remainder of the argument (i.e. everything after the proof of Claim \ref{claimexistsr}) is still applicable, and shows that $\underline\pdim(\pi(\omega),\nu_\pp) \leq (1 + o(1)) \delta(\pp)$, so $\pi(\omega) \in S_2$. Since $\omega$ was arbitrary, we have $\pi(\Omega) \subset S_2$, demonstrating \eqref{S2}.
\end{proof}

\section{Continuity of dimension functions}
\label{sectioncontinuous}

In this section we prove the continuity of the Hausdorff and dynamical dimensions as functions of the defining IFS, i.e. Theorem \ref{theoremcontinuous}.

\begin{theorem}[Generalization of Theorem \ref{theoremcontinuous}]
The functions
\begin{align*}
\Phi &\mapsto \sup_{\rr\in\RR} \delta(\rr),&
\Phi &\mapsto \sup_{\pp\in\PP} \delta(\pp)
\end{align*}
are continuous on the space of all diagonal IFSes.
\end{theorem}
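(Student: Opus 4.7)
My approach is to prove a uniform Lipschitz bound
\[
|\delta_\Phi(\rr) - \delta_{\Phi'}(\rr)| \leq C |\Phi - \Phi'|,
\]
with $C$ locally bounded on the space of diagonal IFSes and independent of $\rr \in \RR$. From this, continuity of $\Phi \mapsto \sup_\rr \delta(\rr)$ follows by the elementary estimate $|\sup f - \sup g| \leq \sup|f - g|$; the statement for $\sup_\pp \delta(\pp)$ reduces to the first via the inclusion $\PP \cong \RR_1 \subset \RR$ of constant cycles.

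The first ingredient is Lipschitz control on the Lyapunov exponents. Near any fixed $\Phi$, the contraction ratios $|\phi_{\aa,i}'|$ stay in a compact subset of $(0,1)$, so $\Phi \mapsto -\log|\phi_{\aa,i}'|$ is Lipschitz on a neighborhood, yielding $|\chi_i^\Phi(\pp) - \chi_i^{\Phi'}(\pp)| \leq C_1 |\Phi - \Phi'|$ uniformly in $\pp \in \PP$, together with a uniform lower bound $\chi_i \geq c_0 > 0$. Next I would control the thresholds $B_i^\Phi = B_i^\Phi(\rr, B)$ defined implicitly by \eqref{Bidef}. Since the integrand $\chi_i^\Phi(\rr_b)$ is bounded below by $c_0$, the map $x \mapsto \int_0^x \chi_i^\Phi(\rr_b)\,\dee b$ is bi-Lipschitz, and a routine implicit-function estimate gives $|B_i^\Phi - B_i^{\Phi'}| \leq C_2 B |\Phi - \Phi'|$ uniformly in $\rr \in \RR$ and $B > 0$.

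The crux is then to show $|\delta_\Phi(\rr, B) - \delta_{\Phi'}(\rr, B)| \leq C_3 |\Phi - \Phi'|$ uniformly in both $\rr$ and $B$. Writing $\delta_\Phi(\rr, B) = B^{-1}\int_0^\infty h(\{i : b \leq B_i^\Phi\}; \rr_b)\,\dee b$, the symmetric difference between the sets $\{i : b \leq B_i^\Phi\}$ and $\{i : b \leq B_i^{\Phi'}\}$ occupies a $b$-set of Lebesgue measure at most $d \cdot C_2 B |\Phi - \Phi'|$, and the integrand is pointwise bounded by $\log \#(E)$. The linear-in-$B$ error then cancels against the $B^{-1}$ prefactor, producing a bound uniform in $B$. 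Since $\delta_\Phi(\rr) = \inf_{B > 0} \delta_\Phi(\rr, B)$ by the exponential $\lambda$-periodicity of $B \mapsto \delta_\Phi(\rr, B)$ (when $\rr\in\RR_\lambda$), the Lipschitz estimate passes to $\delta_\Phi(\rr)$ uniformly in $\rr$, finishing the proof.

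The main obstacle is engineering the $B^{-1}$ cancellation that makes the estimate uniform over the non-compact space $\RR$. Without it, a naive argument would only yield continuity of $\Phi \mapsto \sup_{\rr \in \RR_\lambda} \delta(\rr)$ for each fixed $\lambda$, and one would be forced to address the subtle question of whether the supremum stabilizes as $\lambda \to \infty$. The cancellation effectively reduces an apparently infinite-dimensional variational problem to a local, pointwise-uniform estimate on the finite-dimensional parameter space of the base IFSes.
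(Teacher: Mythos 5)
Your proposal is correct, and it takes a genuinely different route from the paper's. The paper handles $\sup_{\pp\in\PP}\delta(\pp)$ by joint continuity of $(\Phi,\pp)\mapsto\delta(\pp)$ plus compactness of $\PP$; for $\sup_{\rr\in\RR}\delta(\rr)$ it confronts the non-compactness of $\RR$ head-on, restricting to cycles with exponential Lipschitz constant and period at most $k$ (a compact family by Arzela--Ascoli), and then proving that the truncated suprema $\delta_k$ converge to $\sup_{\rr\in\RR}\delta(\rr)$ locally uniformly in $\Phi$ -- which requires re-running the proofs of Claim \ref{claimexistsr} and Theorem \ref{theoremgood2} with uniform control on all constants, sandwiching via $\HD(\Phi)$, and a separate trick (replacing $|\phi_{i,a}'|$ by $|\phi_{i,a}'|^\alpha$) to cover sponges that are not good. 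Your uniform Lipschitz bound bypasses all of this. The decisive observation is that, with the combinatorial data $(E,(A_i)_i)$ fixed, the entropies $h_I(\rr_b)$ in \eqref{deltarB1} do not depend on $\Phi$ at all, so $\Phi$ enters $\delta_\Phi(\rr,B)$ only through the thresholds $B_i$ of \eqref{Bidef}; their perturbation is $O(B\,|\Phi-\Phi'|)$, the integrand differs only on a $b$-set of that measure and is bounded by $\log\#(E)$, and the $B^{-1}$ prefactor cancels the linear-in-$B$ error. All constants are locally uniform because the contraction ratios stay in a compact subset of $(0,1)$, giving $\chi_i\geq c_0>0$. The resulting estimate is uniform over all of $\RR$ and all $B>0$, passes through $\inf_B$ and $\sup_\rr$, and needs neither compactness arguments nor any dimension-theoretic input. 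Your argument is shorter, self-contained (independent of Theorem \ref{theoremgood2}), and yields a quantitative (locally Lipschitz) modulus of continuity rather than bare continuity. What the paper's heavier route buys in exchange is the uniform approximation of $\sup_{\rr\in\RR}\delta(\rr)$ by suprema over the compact families $\RR_{\lambda,k}$, which is the ingredient one wants for the computability claim made in the remark following Theorem \ref{theoremcontinuous}; your Lipschitz bound by itself does not explain how to evaluate the supremum over the non-compact space $\RR$ for a single $\Phi$.
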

\begin{proof}
It is easy to see that the maps
\begin{align*}
(\Phi,i,\pp) &\mapsto \chi_i(\pp),&
(\Phi,I,\pp) &\mapsto h_I(\pp)
\end{align*}
are continuous. Applying \eqref{deltap1} shows that the map
\[
(\Phi,\pp) \mapsto \delta(\pp)
\]
is continuous. Since $\PP$ is compact, it follows that the map $\Phi \mapsto \sup_{\pp\in\PP} \delta(\pp)$ is continuous.

Now if we endow $\RR$ with the topology of locally uniform convergence, then the maps
\begin{align*}
(\Phi,i,\rr,B) &\mapsto B_i,&
(\Phi,\rr,B) &\mapsto \delta(\rr,B)
\end{align*}
are continuous. Since the infimum in \eqref{deltar} is taken over a compact set, it follows that the map
\[
(\Phi,\lambda,\rr) \mapsto \delta(\rr)
\]
is continuous. Here we need to include $\lambda$ as an input because of its appearance in the formula \eqref{deltar}.

Now we define the \emph{exponential Lipschitz constant} of a cycle $\rr\in\RR$ to be the Lipschitz constant of the periodic function $t\mapsto \rr_{\exp(t)}$. Note that although some elements of $\RR$ have infinite exponential Lipschitz constant, we can choose the countable dense subsets $\QQ_\lambda \subset \RR_\lambda$ appearing in the proof of Theorem \ref{theoremgood2} so that all elements of $\QQ \df \bigcup_{1 \leq \lambda \in \Q} \QQ_\lambda$ have finite exponential Lipschitz constant. For each $k > 1$, let $\RR_{\lambda,k}$ (resp. $\QQ_{\lambda,k}$) denote the set of all cycles $\rr\in\RR_\lambda$ (resp. $\rr\in\QQ_\lambda$) with exponential Lipschitz constant $\leq k$. Then by the Arzela--Ascoli theorem, the set
\[
\coprod_{\lambda\in [1,k]} \RR_{\lambda,k} = \{(\lambda,\rr) : \lambda \in [1,k], \; \rr \in \RR_{\lambda,k}\}
\]
is compact, and thus for each $k$ the map
\[
\Phi \mapsto \delta_k \df \sup_{\rr \in \bigcup_{\lambda \in [1,k]} \RR_{\lambda,k}} \delta(\rr)
\]
is continuous. To complete the proof, we need to show that the convergence
\[
\delta_k \tendsto{k\to\infty} \sup_{\rr\in\RR} \delta(\rr)
\]
is locally uniform with respect to $\Phi$.

Indeed, fix $\epsilon > 0$, and let $0 < \epsilon_3 < \epsilon_2 < 1$ be as in the proof of Claim \ref{claimexistsr}. Then:
\begin{itemize}
\item The numbers $T_1,T_2 \in \R$ appearing in the proof of Claim \ref{claimexistsr} may be chosen so that $\rho_1 \df T_2 - T_1$ is bounded depending only on $C$, $\epsilon_3$, and $\#(E)$. Since $\lambda$ can be bounded in terms of $\rho_1$, this shows that the $\lambda$ appearing in the conclusion of Claim \ref{claimexistsr} can be bounded in terms of the $C$ and $\epsilon$ that appear in the hypotheses. Now $C$ depends only on the maximum and minimum of the function $D\times \PP \ni (i,\pp) \mapsto \chi_i(\pp)$, so it is bounded when $\Phi$ ranges over a compact set. So $\lambda$ can be bounded in terms of $\epsilon$, assuming that $\Phi$ ranges over a compact set.
\item The exponential Lipschitz constant of the function $\tt$ appearing in the proof of Claim \ref{claimexistsr} can be bounded in terms of the $C^2$ norm of the smooth function $\psi$. The function $\psi$ depends only on $\epsilon_2$, which in turn depends only on $\epsilon$. Moreover, an approximation $\rr\in\QQ_\lambda$ of $\tt$ satisfying \eqref{rbtb} can be found with exponential Lipschitz constant bounded in terms of the Lipschitz norm of $\tt$. So the exponential Lipschitz constant of $\rr$ is bounded in terms of $\epsilon$.
\item The rate of convergence of the $o(1)$ term to $0$ at the end of the proof of Theorem \ref{theoremgood2} is locally uniform with respect to $\Phi$ as $\epsilon \to 0$.
\end{itemize}
Thus the proof of Theorem \ref{theoremgood2} actually shows that
\[
\HD(\Phi) \leq \HD\left(\bigcap_{\epsilon > 0} \bigcup_{\lambda \in \Q\cap [1,k(\epsilon)]} \bigcup_{\rr\in \QQ_{\lambda,k(\epsilon)}} S_{\rr,\epsilon}\right) \leq \inf_{\epsilon > 0} [\delta_{k(\epsilon)} + \epsilon]
\]
for some function $k$ that can be taken to be independent of $\Phi$ as $\Phi$ ranges over a compact set. Thus if $\Lambda_\Phi$ is good, then
\begin{equation}
\label{deltakepsilon}
\delta_{k(\epsilon)} \geq \sup_{\rr\in\RR} \delta(\rr) - \epsilon,
\end{equation}
which completes the proof in this case. If $\Lambda_\Phi$ or its perturbations are not good, then we may justify the inequality \eqref{deltakepsilon} by appealing to the existence of a good sponge $\Lambda_\Psi$ with good perturbations, indexed by the same set $E$, such that $|\psi_{i,a}'| = |\phi_{i,a}'|^\alpha$ for all $i\in D$ and $a\in A_i$. Here $\alpha > 0$ must be chosen large enough so that $\sum_{a\in A_i} |\phi_{i,a}'|^\alpha < 1$ for all $i\in D$, which guarantees the existence of a base IFS $\Psi_i$ whose perturbations satisfy the open set condition. It is readily verified that $\delta_\Psi(\rr) = \delta_\Phi(\rr)/\alpha$ for all $\rr\in\RR$, so that \eqref{deltakepsilon} holds for $\Phi$ if and only if it holds for $\Psi$.
\end{proof}

\section{Special cases where $\HD(\Phi) = \DynD(\Phi)$}
\label{sectionequality}

In this section we give new proofs of Theorems \ref{theoremLGB} and \ref{theoremBMhigher}, i.e. equality of the Hausdorff and dynamical dimensions in certain special cases, based on the results of the previous sections. Both of the theorems can now be stated in somewhat greater generality than they were in the introduction.

\begin{theorem}[Generalization of Theorem \ref{theoremBMhigher}]
\label{theoremBMhigher2}
Let $\Lambda_\Phi$ be a good sponge such that for all $i\in D$, the map $A_i \ni a \mapsto |\phi_{i,a}'|$ is constant. Then $\HD(\Phi) = \DynD(\Phi)$.
\end{theorem}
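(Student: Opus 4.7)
The plan is to invoke Corollary \ref{corollarygood} to reduce $\HD(\Phi) = \DynD(\Phi)$ to the identity $\sup_{\rr\in\RR} \delta(\rr) = \sup_{\pp\in\PP} \delta(\pp)$. The inequality $\geq$ is immediate, since constant cycles $\pp\in\RR_1$ lie in $\RR$, so the work is in proving the reverse direction.

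The hypothesis that $|\phi_{i,a}'|$ depends only on $i$ makes the Lyapunov exponents $\chi_i(\pp) = -\log|\phi_{i,a}'|$ into constants independent of the measure; denote them $\chi_i$. Consequently, the numbers $B_i$ from \eqref{Bidef} simplify to $B_i = B/\chi_i$, and the level sets $\{i\in D : b \leq B_i\} = \{i\in D : \chi_i \leq B/b\}$ depend only on $b$ and $B$. In particular $\pp \mapsto \delta(\pp)$ becomes a non-negative linear combination of the concave entropy functionals $\pp \mapsto h(I;\pp)$, hence is itself concave on $\PP$ --- the structural fact that will drive the proof.

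Given $\rr\in\RR_\lambda$ with $\lambda > 1$, I would rewrite \eqref{deltarB1} via the substitution $x = B/b$ as
\[
\delta(\rr, B) = \int_{\chi_1}^\infty h\big(\{i\in D : \chi_i \leq x\};\rr_{B/x}\big) \;\frac{\dee x}{x^2}\cdot
\]
Since $\delta(\rr)$ is the infimum of $\delta(\rr, B)$ on $[1,\lambda]$, it is bounded above by the logarithmic average $\frac{1}{\log\lambda}\int_1^\lambda \delta(\rr,B)\,\dee B/B$. Exchanging the order of integration and using that $u \mapsto \rr_{e^u}$ is periodic with period $\log\lambda$, the inner $B$-integral at each $x$ becomes a full-period average of $u\mapsto h(\{i : \chi_i \leq x\};\rr_{e^u})$. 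Applying Jensen's inequality pointwise in $x$ to the concave functional $h(I;\cdot)$ (cf. Corollary \ref{corollaryentropy}) collapses this into $h(\{i : \chi_i \leq x\}; \pp^{**})$, where
\[
\pp^{**} \df \frac{1}{\log\lambda}\int_0^{\log\lambda} \rr_{e^u}\;\dee u \in \PP.
\]
Undoing the $x$-substitution identifies the bound as $\delta(\pp^{**}, 1) = \delta(\pp^{**})$, so $\delta(\rr) \leq \delta(\pp^{**})$ and taking suprema finishes the proof.

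The main obstacle is selecting the correct averaging operation $\rr \mapsto \pp^{**}$: it is tempting to use the arithmetic average $\what\bfR_\lambda$, but that average does not interact cleanly with the $\inf_{B\in[1,\lambda]}$ definition of $\delta(\rr)$. The logarithmic average works because the natural time variable on $\RR_\lambda$ is $t = \log B$, with respect to which both the cycle $\rr$ and the map $B\mapsto \delta(\rr,B)$ are genuinely periodic; one can then extract a clean Jensen bound by averaging in $t$ before averaging in $\pp$. Once this choice is in place, the remaining manipulations are routine changes of variable.
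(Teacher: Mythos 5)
Your proposal is correct and follows essentially the same route as the paper: bound $\delta(\rr)$ by the logarithmic average $\frac{1}{\log\lambda}\int_1^\lambda\delta(\rr,B)\,\frac{\dee B}{B}$, exploit that the constancy of $a\mapsto|\phi_{i,a}'|$ makes each $B_i$ a fixed multiple of $B$ so that multiplicative changes of variable respect $\dee B/B$ and the exponential periodicity, and then use concavity of entropy (Corollary \ref{corollaryentropy}) to land on Bernoulli data. The only cosmetic difference is that you apply Jensen after averaging over the period, collapsing everything onto the single measure $\pp^{**}$, whereas the paper applies it before (inside Proposition \ref{propositionXinc}) and bounds the resulting period average of $\delta(\what\bfR_A)$ term by term by $\DynD(\Phi)$.
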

\begin{proof}
Fix $\rr\in\RR$, and we will show that $\delta(\rr) \leq \DynD(\Phi)$. For each $i\in D$, let $r_i > 0$ be the constant such that $|\phi_{i,a}'| = r_i$ for all $a\in A_i$, and let $X_i = -\log(r_i)$. For all $B > 0$ and $i\in D$, we have
\[
B = \chi_i(\bfR_{B_i}) = X_i B_i,
\]
i.e. $B_i = B/X_i$. Now without loss of generality suppose that $X_1 \leq \cdots \leq X_d$. Then
\begin{align*}
\delta(\rr) &\leq \frac{1}{\log(\lambda)} \int_1^\lambda \delta(\rr,B) \;\frac{\dee B}{B} \by{\eqref{deltar}}\\
&\leq \frac{1}{\log(\lambda)} \int_1^\lambda \sum_{i\in D} \frac{h(I_{\leq i}\given I_{\leq i - 1};\what\bfR_{B_i})}{\chi_i(\what\bfR_{B_i})} \;\frac{\dee B}{B} \by{\eqref{deltarB2}}\\
&= \frac{1}{\log(\lambda)} \int_1^\lambda \sum_{i\in D} \frac{h(I_{\leq i}\given I_{\leq i - 1};\what\bfR_A)}{\chi_i(\what\bfR_A)} \;\frac{\dee A}{A} \note{letting $A = B_i$}\\
&= \frac{1}{\log(\lambda)} \int_1^\lambda \delta(\what\bfR_A) \;\frac{\dee A}{A} \by{\eqref{deltap2}}\\
&\leq \frac{1}{\log(\lambda)} \int_1^\lambda \DynD(\Phi) \;\frac{\dee A}{A} = \DynD(\Phi). \by{\eqref{DDbernoulli} and \eqref{deltap1}}
\end{align*}
The key step in this proof is the substitution $A = B_i = B/X_i$, which is valid because $\dee B_i/B_i = \dee B/B$. In general, when $B_i$ and $B$ are only related by the formula \eqref{Bidef}, the relation $\dee B_i/B_i = \dee B/B$ is not valid, and that is the reason that this proof does not work in the general case.
\end{proof}

\begin{theorem}[Generalization of Theorem \ref{theoremLGB}]
\label{theoremDGbound}
For every good sponge $\Lambda_\Phi \subset [0,1]^d$, we have
\[
\HD(\Phi) \leq \max(1,d - 1) \DynD(\Phi).
\]
In particular, if $d \leq 2$ then $\HD(\Phi) = \DynD(\Phi)$.
\end{theorem}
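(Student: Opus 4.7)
The plan is to use Corollary~\ref{corollarygood}, which under goodness of $\Lambda_\Phi$ identifies $\HD(\Phi) = \sup_{\rr \in \RR} \delta(\rr)$ and $\DynD(\Phi) = \sup_{\pp \in \PP} \delta(\pp)$, reducing the theorem to the measure-theoretic inequality $\delta(\rr) \leq \max(1,d-1)\,\DynD(\Phi)$ for every $\rr \in \RR_\lambda$. Fix such an $\rr$ and $B \in [1,\lambda]$, and after relabeling the coordinates assume $B_1 \geq B_2 \geq \cdots \geq B_d$. Proposition~\ref{propositionXinc} provides
\[
\delta(\rr,B) \;\leq\; \sum_{i=1}^d T_i, \qquad T_i \df \frac{h(I_{\leq i}\given I_{\leq i-1};\,\what\bfR_{B_i})}{\chi_i(\what\bfR_{B_i})}.
\]

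Setting $\pp_i \df \what\bfR_{B_i} \in \PP$, the subadditivity inequality $h(I_{\leq i}\given I_{\leq i-1};\pp_i) \leq h(\{i\};\pp_i)$ together with the pointwise lower bound $\delta(\pp) \geq h(\{i\};\pp)/\chi_i(\pp)$ (immediate from the integral formula~\eqref{deltap1}, since the integrand is everywhere nonnegative and is at least $h(\{i\};\pp)$ on the interval $b \in (0, 1/\chi_i(\pp))$) shows that $T_i \leq \delta(\pp_i) \leq \DynD(\Phi)$ for every $i$. Summing across the $d$ terms yields the preliminary bound $\delta(\rr,B) \leq d\cdot\DynD(\Phi)$, which already settles the case $d = 1$.

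For $d \geq 2$, the plan is to improve this by one factor by merging one summand into the rest via an averaging argument modeled on Theorem~\ref{theoremBMhigher2}. Upper-bound $\delta(\rr)$ by its average $\frac{1}{\log\lambda}\int_1^\lambda \delta(\rr,B)\,\frac{\dee B}{B}$ over one exponential period and, for a chosen index $i_0$, perform the change of variable $A = B_{i_0}$; a direct computation gives $\frac{\dee B}{B} = \frac{\chi_{i_0}(\rr_A)}{\chi_{i_0}(\what\bfR_A)}\,\frac{\dee A}{A}$, and by recognizing the right-hand side as $\dee[\log \chi_{i_0}(\bfR_A)]$ one finds that the Jacobian factor averages to $1$ over one exponential period. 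In the Sierpi\'nski case of Theorem~\ref{theoremBMhigher2}, this Jacobian is pointwise $1$ for every $i$, so \emph{all} $d$ summands simultaneously collapse into the integrand $\delta(\what\bfR_A) \leq \DynD(\Phi)$; in the general case, only a single index $i_0$ can be normalized at a time, so the plan is to exploit this single collapse to merge $T_{i_0}$ with its neighbor in the sum, leaving $d-1$ surviving summands each bounded by $\DynD(\Phi)$ via the crude estimate above.

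The main obstacle is making the merging step precise: the averaged Jacobian identity holds only for one $i_0$ at a time, so carrying out a Sierpi\'nski-style collapse for a \emph{pair} of summands requires carefully controlling the entropy terms $h(I_{\leq i}\given I_{\leq i-1};\what\bfR_A)$ after the substitution simultaneously for two consecutive indices. The concavity of entropy (Lemma~\ref{lemmaentropy}) together with exponential periodicity of $\what\bfR_A$ must be combined to absorb the mismatch between the Jacobian factors for the two indices. Once this combinatorial savings is carried through, the case $d\leq 2$ is immediate: $\max(1,d-1)=1$ yields $\HD(\Phi)\leq\DynD(\Phi)$, which combined with the trivial reverse inequality $\HD(\Phi)\geq\DynD(\Phi)$ gives the asserted equality.
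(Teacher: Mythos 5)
Your reduction via Corollary \ref{corollarygood} and your crude bound are correct: $T_i \leq h(\{i\};\what\bfR_{B_i})/\chi_i(\what\bfR_{B_i}) \leq \delta(\what\bfR_{B_i}) \leq \DynD(\Phi)$ does give $\delta(\rr,B) \leq d\cdot\DynD(\Phi)$, which matches the easy half of the paper's argument. But the entire content of the theorem is the improvement from $d$ to $d-1$, and that step is missing from your proposal: you describe the merging of two summands as a plan with an acknowledged ``main obstacle'' rather than carrying it out. The obstacle is real and is exactly the one the paper itself flags at the end of the proof of Theorem \ref{theoremBMhigher2}: the substitution $A = B_{i_0}$ satisfies $\dee A/A = \dee B/B$ only when each $|\phi_{i_0,a}'|$ is constant in $a$; in general the Jacobian is $\dee[\log\chi_{i_0}(\bfR_A)]$, and knowing that this measure has total mass $\log\lambda$ over one period does not let you replace it by $\dee A/A$ under the integral sign against the non-constant integrand $f_{i_0}(A)$ (the two measures can be mutually skewed in exactly the direction that inflates the integral). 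Worse, even after substituting for one index $i_0$, the remaining summands are still evaluated at $B_i \neq A$, so no two summands ever share an argument, and the Sierpi\'nski-style collapse onto $\delta(\what\bfR_A)$ never gets off the ground. Invoking ``concavity of entropy plus exponential periodicity'' does not resolve this; no estimate is actually produced.

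The paper's mechanism for saving one factor is different and does not involve averaging or a change of variables at all. Writing $f_i(B) = h(J_{i,B}\cup\{i\}\given J_{i,B};\what\bfR_B)/\chi_i(\what\bfR_B)$ with a tiebreaker order, one has both $\sum_i f_i(B') = \delta(\what\bfR_{B'}) \leq \DynD(\Phi)$ for every $B'$ and $\delta(\rr) \leq \delta(\rr,B) \leq \sum_i f_i(B_i)$. The key point is that $\delta(\rr) = \inf_B \delta(\rr,B)$ gives you the freedom to \emph{choose} $B$: pick it so that $f_1(B_1) \leq \inf f_1 + \epsilon$ (possible because $B \mapsto B_1$ is a homeomorphism of $(0,\infty)$). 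Then $f_1(B_1) \leq f_1(B_2) + \epsilon$, so the first two summands can be evaluated at the \emph{same} argument $B_2$, where $f_1(B_2) + f_2(B_2) \leq \delta(\what\bfR_{B_2}) \leq \DynD(\Phi)$; the remaining $d-2$ summands are bounded individually. This is the idea you would need to supply; as written, your argument proves only $\HD(\Phi) \leq d\cdot\DynD(\Phi)$, and in particular does not yield the $d=2$ equality.
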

\begin{proof}
Fix $\rr\in\RR$, and we will show that $\delta(\rr) \leq \max(1,d - 1)\DynD(\Phi)$. For each $B > 0$ and $i\in D$ we let
\[
J_{i,B} = \{j\in D : \chi_j(\what\bfR_B) \leq^* \chi_i(\what\bfR_B)\},
\]
where the star on the inequality means that in the case of a tie, we determine whether or not the inequality is true using an arbitrary but fixed ``tiebreaker'' total order $\prec$ on $D$: we declare the inequality to be true if $j \precneq i$, and false if $j \succ i$. Then we let
\[
f_i(B) = \frac{h(J_{i,B}\cup\{i\} \given J_{i,B};\what\bfR_B)}{\chi_i(\what\bfR_B)}\cdot
\]
Now,
\begin{itemize}
\item If $\chi_1(\what\bfR_B) \leq^* \cdots \leq^* \chi_d(\what\bfR_B)$, then $J_{i,B} = I_{\leq i - 1}$, and so by Theorem \ref{theoremDDbernoulli} and Proposition \ref{propositionXinc},
\begin{equation}
\label{fiB}
\DynD(\Phi) \geq \delta(\what\bfR_B) = \sum_{i\in D} f_i(B).
\end{equation}
\item If $B_1 \geq^* \cdots \geq^* B_d$, then $\chi_j(\bfR_{B_i}) \leq^* \chi_i(\bfR_{B_i}) \leq^* \chi_{j'}(\bfR_{B_i})$ for all $j < i < j'$, so $J_{i,B_i} = I_{\leq i - 1}$, and thus by  Proposition \ref{propositionXinc}, we have
\begin{equation}
\label{fiBi}
\delta(\rr) \leq \delta(\rr,B) \leq \sum_{i\in D} f_i(B_i),
\end{equation}
where $B_1,\ldots,B_d > 0$ are as in \eqref{Bidef}.
\end{itemize}
Both of these hypotheses can be attained by appropriately permuting $D$, assuming that the tiebreaker total order is getting permuted as well. So since the formulas \eqref{fiB} and \eqref{fiBi} are invariant under permutations of $D$, they are true regardless of how the numbers $\chi_i(\what\bfR_B)$ ($i\in D$) and $B_i$ ($i\in D$) are ordered.

Now fix $\epsilon > 0$, and let $B > 0$ be chosen so that $f_1(B_1) \leq \inf(f_1) + \epsilon$. (This is possible because the map $B \mapsto B_1$ is a homeomorphism of $(0,\infty)$.) If $d \geq 2$, then we get
\[
f_1(B_1) + f_2(B_2) \leq f_1(B_2) + \epsilon + f_2(B_2) \leq \DynD(\Phi) + \epsilon
\]
and thus
\[
\delta(\rr) \leq \sum_{i\in D} f_i(B_i) \leq \DynD(\Phi) + \epsilon + \sum_{i = 3}^d f_i(B_i) \leq (d - 1) \DynD(\Phi) + \epsilon.
\]
Since $\rr$ and $\epsilon$ were arbitrary, we get $\HD(\Phi) \leq (d - 1)\DynD(\Phi)$. If $d = 1$, then $\HD(\Phi) = \DynD(\Phi)$, so in any case $\HD(\Phi) \leq \max(1,d - 1)\DynD(\Phi)$.
\end{proof}

\begin{remark}
\label{remark2D3D}
This new way of proving Theorem \ref{theoremLGB} sheds light on the question of why there is a difference between the two-dimensional and three-dimensional settings. Namely, since we used the assumption $d = 2$ only at the last possible moment, the proof clarifies exactly how the assumption is needed in the argument.

At a very abstract level, the difference between the two-dimensional and three-dimen\-sion\-al case can be described as follows: The Hausdorff dimension of a ``homogeneous'' non-invariant measure (such as a pseudo-Bernoulli measure) is equal to the lim inf of its dimension at different length scales. At each length scale, the dimension is equal to the sum of the coordinatewise dimensions at that scale. So if $\delta_i$ is the coordinatewise dimension as a function of the length scale $\rho$, then
\[
\HD(\text{non-invariant measure}) = \liminf_{\rho\to 0} \sum_i \delta_i(\rho).
\]
Now, the existence of this non-invariant homogeneous measure will allow us to deduce the existence of certain invariant measures, namely there exist continuously varying tuples of length scales $(\rho_1,...,\rho_d)\to 0$ such that there is some invariant measure which for all $i$ has the same behavior as the non-invariant measure in coordinate $i$ and length scale $\rho_i$. The dimension of such a measure would be
\[
\HD(\text{invariant measure}) = \sum_i \delta_i(\rho_i).
\]
Obviously, the problem with comparing these two formulas is that the $\rho_i$s may be different from each other. In dimension $1$, there is only one number $\rho_i$ so there is no issue. But we can handle one more dimension using the fact that the first formula has a lim inf instead of a lim sup. Namely, we can choose a value of $\rho$ so as to minimize one of the numbers $\delta_i(\rho)$, for concreteness say $\delta_1(\rho)$. This handles the first coordinate, and we can handle the second coordinate by choosing the pair $(\rho_1,\rho_2)$ so that $\rho_2 = \rho$. But there is no way to handle any more coordinates.

One aspect of this explanation is that it implies that the reason we can handle two coordinates instead of just one is that we are considering the Hausdorff dimension, which corresponds to a lim inf, rather than the packing dimension, which corresponds to a lim sup. It is well-known that the Hausdorff and packing dimensions of a self-affine set can be different even in two dimensions; see e.g. \cite[Theorem 4.6]{LalleyGatzouras} together with \cite[Proposition 2.2(i)]{Peres4}. This is in contrast to the situation for finite conformal IFSes, where the Hausdorff and packing dimensions are always the same \cite[Lemma 3.14]{MauldinUrbanski1}.
\end{remark}

\section{Construction of dimension gap sponges}
\label{sectioncounterexample}

In this section we prove the main result of this paper, the existence of sponges with a dimension gap, viz. Theorem \ref{theoremcounterexample}. Before starting the proof, we give a sketch to convey the main ideas. In the sketch we write down formulas without giving any justification, since these formulas will be justified in detail in the real proof.

\begin{convention}
We denote the product of two matrices $\bfA$ and $\bfB$ by $\bfA\cdot\bfB$. It should not be confused with the scalar product of two vectors $\vv$ and $\ww$, which we denote by $\lb \vv,\ww\rb$.
\end{convention}

\begin{figure}
\scalebox{0.75}{
\begin{tikzpicture}[line cap=round,line join=rounr]
\clip(-1,-1) rectangle (9,9);
\fillsquare{0.16}{0.64}{0.5}{1.75};
\fillsquare{0.16}{0.64}{2.25}{3.5};
\fillsquare{0.8}{1.28}{0.5}{1.75};
\fillsquare{0.8}{1.28}{2.25}{3.5};
\fillsquare{1.44}{1.92}{0.5}{1.75};
\fillsquare{1.44}{1.92}{2.25}{3.5};
\fillsquare{2.08}{2.56}{0.5}{1.75};
\fillsquare{2.08}{2.56}{2.25}{3.5};
\fillsquare{2.72}{3.2}{0.5}{1.75};
\fillsquare{2.72}{3.2}{2.25}{3.5};
\fillsquare{3.36}{3.84}{0.5}{1.75};
\fillsquare{3.36}{3.84}{2.25}{3.5};
\fillsquare{4.2}{4.95}{4.25}{5.25};
\fillsquare{5.15}{5.9}{4.25}{5.25};
\fillsquare{6.10}{6.85}{4.25}{5.25};
\fillsquare{7.05}{7.8}{4.25}{5.25};
\fillsquare{4.2}{4.95}{5.5}{6.5};
\fillsquare{5.15}{5.9}{5.5}{6.5};
\fillsquare{6.10}{6.85}{5.5}{6.5};
\fillsquare{7.05}{7.8}{5.5}{6.5};
\fillsquare{4.2}{4.95}{6.75}{7.75};
\fillsquare{5.15}{5.9}{6.75}{7.75};
\fillsquare{6.10}{6.85}{6.75}{7.75};
\fillsquare{7.05}{7.8}{6.75}{7.75};
\vertline080;
\vertline088;
\horizline080;
\horizline088;
\end{tikzpicture}
}
\caption{An example of the disjoint-union-of-product-IFSes construction, with $\#(D) = \#(J) = 2$. In the actual proof of Theorem \ref{theoremcounterexample} we have $\#(D) = \#(J) = 3$.}
\label{figurebasicconstruction}
\end{figure}
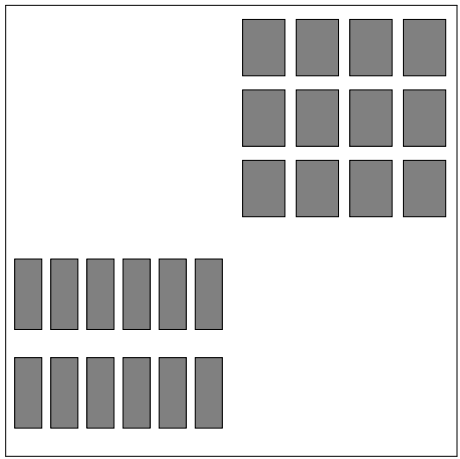

\begin{proof}[Proof Sketch of Theorem \ref{theoremcounterexample}]
The goal is to find a diagonal IFS $\Phi = (\phi_a)_{a\in E}$ on $[0,1]^3$ and a cycle $\rr\in\RR$ such that $\HD(\nu_\rr) > \DynD(\Phi)$. The IFS will be of a special form: it will be the disjoint union of three sub-IFSes, each of which will be the direct product of three similarity IFSes on $[0,1]$ (cf. Figure \ref{figurebasicconstruction}). Letting $D = J = \{1,2,3\}$, we can write $\Phi = \coprod_{j\in J} \prod_{i\in D} \Phi_{i,j}$, where for each $i\in D$ and $j\in J$, $\Phi_{i,j} = (\phi_{i,j,a})_{a\in E_{i,j}}$ is a similarity IFS on $[0,1]$ consisting of similarities all with the same contraction ratio. The properties of the overall IFS $\Phi$ are determined up to some fudge factors by the entropy and Lyapunov exponents of the component IFSes $\Phi_{i,j}$ ($i\in D$, $j\in J$), which we denote by $H_{i,j}$ and $X_{i,j}$, respectively. (In the actual proof, the entropy and Lyapunov exponent of $\Phi_{i,j}$ will only be approximately proportional to $H_{i,j}$ and $X_{i,j}$, rather than equal.) The matrices $\bfH = (H_{i,j})$ and $\bfX = (X_{i,j})$ can be more or less arbitrary, subject to the restriction that $0 < H_{i,j} < X_{i,j}$, which describes the fact that the dimension of the limit set of $\Phi_{i,j}$ must be strictly between 0 and 1. To make the overall IFS satisfy the coordinate ordering condition, the further restriction $X_{i,j} < X_{i + 1,j}$ is also needed.

Once the relation between $\Phi$ and the matrices $\bfH$ and $\bfX$ has been established, $\DynD(\Phi)$ can be estimated based on $\bfH$ and $\bfX$. The maximum of the function $\pp\mapsto\delta(\pp)$ is always attained at points of the form $\sum_{j\in J} q_j \uu_j$, where $\uu_j$ denotes the normalized uniform measure on $E_j \df \prod_{i\in D} E_{i,j}$, i.e. $\uu_j = \#(E_j)^{-1} \sum_{\aa\in E_j} \delta_\aa$, and $\qq = (q_1,q_2,q_3) \in \R^J$ is a probability vector. Equivalently, the maximum is attained at $\bfM\cdot\qq$ for some $\qq\in\Delta$, where $\Delta \subset \R^J$ is the space of probability vectors on $J$ and $\bfM\cdot\ee_j = \uu_j$ for all $j\in J$. Here and hereafter $(\ee_j)_{j\in J}$ denotes the standard basis of $\R^J$. To make things simpler later, we will choose $\bfH$ and $\bfX$ so that we can be even more precise: the maximum of $\pp\mapsto \delta(\pp)$ is attained at $\pp = \bfM\cdot\uu$, where $\uu = (1/3,1/3,1/3)\in\Delta$ is the normalized uniform measure on $J$.

Next, let us describe the cycle $\rr\in\RR$ for which we will prove that $\HD(\nu_\rr) > \DynD(\Phi)$. Its range will consist of probability vectors of the form $\bfM\cdot\qq$ with $\qq\in\Delta$, i.e. those probability vectors which were considered candidates for the maximum of $\pp\mapsto\delta(\pp)$ in the previous paragraph. So we can write $\rr_b = \bfM\cdot\ss_b$, where $\ss:(0,\infty)\to\Delta$ is exponentially periodic. The trajectory of $\ss$ will be the inscribed circle of the triangle $\Delta$ (cf. Figure \ref{figureinscribed}), and the exponential period of $\ss$ will be $e^{2\pi\gamma}$ for some small number $\gamma > 0$. Formally, we will write
\[
\ss_{\exp(\gamma t)} = \zz(t)
\]
where $\zz:\R\to \Delta$ is a unit speed (with respect to angle) parameterization of the inscribed circle of $\Delta$. (In the actual proof, for greater generality we will let $\rho$ denote the period of $\zz$, so that in our case $\rho = 2\pi$.)

\begin{figure}
\scalebox{1}{
\begin{tikzpicture}[line cap=round,line join=round,>=triangle 45,x=1.0cm,y=1.0cm]
\clip(-3.798698493920108,-2.5896412504082704) rectangle (3.973381390913784,4.084586335592283);
\draw (0.0,3.5)-- (0.0,-0.0);
\draw (0.0,-0.0)-- (-3.0,-2.0);
\draw (0.0,-0.0)-- (3.0,-2.0);
\draw (-1.4966531143275326,-0.9977687428850217)-- (0.0,2.030608364992254);
\draw (0.0,2.030608364992254)-- (1.4962628069604793,-0.9975085379736529);
\draw (1.4962628069604793,-0.9975085379736529)-- (-1.4966531143275326,-0.9977687428850217);
\draw [line width=0.58pt,dash pattern=on 2pt off 2pt,fill=black,fill opacity=0.1] (1.721392000161447E-4,-0.06776266351843657) circle (0.9298759758774434cm);
\begin{scriptsize}
\draw[color=black] (0.12055546509869226,3.7358391612728115) node {$q_1$};
\draw [fill=black] (-0.006,-0.094) circle (0.67pt);
\draw[color=black] (-0.006,-0.27) node {$\uu$};
\draw[color=black] (-3.1,-1.759) node {$q_2$};
\draw[color=black] (3.1,-1.759) node {$q_3$};
\draw [fill=black] (-1.4966531143275326,-0.9977687428850217) circle (0.5pt);
\draw [fill=black] (0.0,2.020608364992254) circle (0.5pt);
\draw [fill=black] (1.4962628069604793,-0.9975085379736529) circle (0.5pt);
\end{scriptsize}
\end{tikzpicture}
}
\caption{The inscribed circle of the simplex $\Delta$, which represents the trajectory of $\ss$. This trajectory geometrically represents the non-invariant/pseudo-Bernoulli measure that we prove has dimension strictly greater than the dynamical dimension, while its center $\uu$ represents the invariant/Bernoulli measure of maximal dimension.}
\label{figureinscribed}
\end{figure}
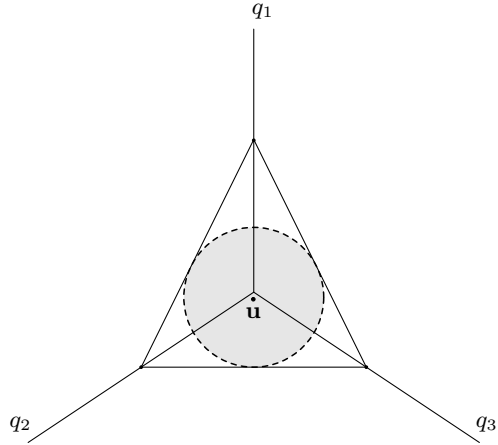

\begin{remark}
The fact that the trajectory of $\ss$ is a circle is motivated by the fact that $\ss$ should be (exponentially) periodic and smooth, and that the ``center'' of its trajectory should be the maximum of $\pp\mapsto\delta(\pp)$. The fact that the exponential period is close to 1 is motivated by the fact that the ``advantage'' that non-constant cycles $\rr\in\RR$ have over constant points $\pp\in\PP$ is the fact that they are ``moving'', so to maximize this advantage, it makes sense to maximize the speed of motion. However, the tradeoff is that the dimension gap $\HD(\nu_\rr) - \DynD(\Phi)$ ends up depending proportionally on $\gamma$ as $\gamma\to 0$ (see \eqref{gapasymp} below), so the size of the dimension gap tends to zero as $\gamma \to 0$. This is one of the reasons that it is difficult for us to get good lower bounds on the size of the dimension gap; cf. Questions \ref{questionMDG}.
\end{remark}

With this setup, after making the additional simplification that $\bfH_i\cdot\uu = 2^{i - 1}$ and $\bfX_i\cdot\uu = 2^i$ for all $i\in D$, where $\bfH_i$ and $\bfX_i$ denote the $i$th rows of $\bfH$ and $\bfX$, respectively, one finds that the size of the dimension gap is
\begin{equation}
\label{gapasymp}
\HD(\nu_\rr) - \DynD(\Phi) = \gamma \inf_{t\in [0,2\pi]}\sum_{i\in D} \bfK_i\cdot\bfZ(t_{i,0}) + O(\gamma^2),
\end{equation}
where $\bfK_i = 2^{-i} (\bfH_i - (1/2)\bfX_i)$, $\bfZ:\R\to\R^J$ is a unit speed parameterization of a certain circle in the plane $P = \{\qq\in\R^J : q_1 + q_2 + q_3 = 0\}$, and $t_{i,0}$ is defined by the equation
\begin{equation}
\label{tidef1}
t = t_{i,0} + \bfY_i\cdot\bfZ(t_{i,0}) \all i,
\end{equation}
where $\bfY_i = 2^{-i} \bfX_i$. So the goal now is to make the coefficient of $\gamma$ in \eqref{gapasymp} positive, while still making sure that the maximum of $\pp\mapsto \delta(\pp)$ is attained at $\pp = \bfM\cdot\uu$. This is done most efficiently by assuming that $\bfK$ and $\bfY$ are close to a known value that would lead to the map $\Delta \ni \qq \mapsto\delta(\bfM\cdot\qq)$ being constant; i.e.
\begin{align*}
\bfK &= \epsilon\w\bfK,&
\bfY &= \bfU + \epsilon\w\bfY,
\end{align*}
where $\bfU$ is the $3\times 3$ matrix whose entries are all equal to 1, $\w\bfK$ and $\w\bfY$ are matrices chosen so that $\w\bfK\cdot\uu = \w\bfY\cdot\uu = 0$, and $\epsilon > 0$ is small. Then the time $t_{i,0}$ defined by \eqref{tidef1} approaches $t$ as $\epsilon\to 0$, so the coefficient of $\gamma$ in \eqref{gapasymp} becomes
\[
\epsilon^2\inf_{t\in [0,2\pi]} \w\bfK_i\cdot\bfZ'(t)[-\w\bfY_i\cdot\bfZ(t)] + O(\epsilon^3),
\]
so we need the coefficient of $\epsilon^2$ in this expression to be positive:
\begin{equation}
\label{noncommA}
\sup_{t\in [0,2\pi]} (\w\bfK_i\cdot\bfZ'(t))(\w\bfY_i\cdot\bfZ(t)) < 0.
\end{equation}
At the same time, we need the maximum of $\pp\mapsto \delta(\pp)$ to be attained at $\pp = \bfM\cdot\uu$; it is enough to check that
\begin{align}
\label{commA}
\sum_{i\in D} \w\bfK_i &= \0,&
\sum_{i\in D} (\w\bfK_i\cdot\qq)(\w\bfY_i\cdot\qq) > 0 \all \qq\in\R^J\butnot\R\uu.
\end{align}
The proof is then completed by finding matrices $\w\bfK$ and $\w\bfY$ that satisfy all these requirements. Intuitively, the difficulty should come in reconciling the requirements \eqref{noncommA} and \eqref{commA}, since the latter is what shows that a constant element of $\Delta$ cannot produce a dimension greater than $3/2$, while the former is what shows that the nonconstant circular cycle can produce such a dimension gap. However, the requirements are compatible because \eqref{noncommA} incorporates the geometry of circular motion, in which the derivative $\bfZ'(t)$ is always orthogonal to $\bfZ(t)$, while \eqref{commA} cannot incorporate the geometry of any shape because it comes from considering only constant cycles. This completes the proof sketch.
\end{proof}

\begin{proof}[Proof of Theorem \ref{theoremcounterexample}]
It suffices to consider the case $d = 3$, since a 3-dimensional Bara\'nski sponge can be isometrically embedded into any higher dimension. Let $\bfH = (H_{i,j})$ and $\bfX = (X_{i,j})$ be $3\times 3$ matrices to be specified later. We think of their rows as being indexed by the set $D \df \{1,2,3\}$, while their columns are indexed by $J \df \{1,2,3\}$.
Here we have made a conceptual distinction between the sets $D$ and $J$ even though they are set-theoretically the same, because the fact that these two sets have the same cardinality has no relevance until much later in the argument. Geometrically, $D$ corresponds to the number of dimensions (i.e. $D$ is the set of coordinates), while $J$ corresponds to the number of distinct ``types'' of contractions that we will put into our diagonal IFS. We will assume that
\begin{equation}
\label{basicassumptions}
\begin{split}
&0 < H_{i,j} < X_{i,j} \all i\in D \all j\in J,\\
&X_{i,j} < X_{i + 1,j} \all i = 1,2 \all j\in J.
\end{split}
\end{equation}
Fix $k$ large. For each $i\in D$ and $j\in J$, let $N_{i,j} = \lfloor e^{k H_{i,j}}\rfloor$ and $r_{i,j} = e^{-k X_{i,j}}$, and let $\Phi_{i,j} = (\phi_{i,j,a})_{a\in E_{i,j}}$ be a one-dimensional IFS of contracting similarities satisfying the strong separation condition with respect to $[0,1]$ such that
\begin{itemize}
\item[(I)] $\phi_{i,j,a}([0,1]) \subset ((j - 1)/3,j/3)$  for all $a\in E_{i,j}$;
\item[(II)] $\#(E_{i,j}) = N_{i,j}$; and
\item[(III)] $|\phi_{i,j,a}'| = r_{i,j}$ for all $a\in E_{i,j}$.
\end{itemize}
This is possible as long as $N_{i,j} r_{i,j} < 1/3$, which is true for all sufficiently large $k$, since by hypothesis $H_{i,j} < X_{i,j}$.

Now for each $j \in J$, let $E_j = \prod_{i \in D} E_{i,j}$ and $\Phi_j = (\phi_{j,\aa})_{\aa\in E_j}$, where $\phi_{j,\aa}(\xx) = (\phi_{i,j,a_i}(x_i))_{i\in D}$. Let
\[
E = \coprod_{j \in J} E_j = \{(j,\aa) : j\in J, \aa\in E_j\},
\]
and consider the IFS $\Phi = (\phi_{j,\aa})_{(j,\aa)\in E}$. Note that the second half of condition \eqref{basicassumptions} guarantees that $\Phi$ satisfies the coordinate ordering condition with respect to the identity permutation. To emphasize the dependence of $\Phi$ on the parameter $k$, we will sometimes write $\Phi_k$ instead of $\Phi$.

We proceed to estimate $\DynD(\Phi)$ and $\HD(\Phi)$.

{\it Estimation of $\DynD(\Phi)$.}
For each $i\in D$ and $j \in J$, let $\Perm(E_{i,j})$ denote the group of permutations of $E_{i,j}$. Then the group $G = \prod_{j \in J} \prod_{i \in D} \Perm(E_{i,j})$ admits a natural action on $E$, with respect to which the functions $h_I$ ($I\subset D$) and $\chi_i$ ($i\in D$) are invariant. Now let $\pp$ be any probability measure on $E$, and let $\w\pp = \mu_G\ast\pp$, where $\mu_G$ is the Haar/uniform measure of $G$ and $\ast$ denotes convolution. Note that $\w\pp$ is $G$-invariant. Since $h_I$ is superlinear and $\chi_i$ is linear, we have $h_I(\w\pp) \geq h_I(\pp)$ and $\chi_i(\w\pp) = \chi_i(\pp)$ for all $I$ and $i$. Consequently, it follows  from \eqref{deltap1} that $\delta(\w\pp) \geq \delta(\pp)$, so the supremum in \eqref{DDbernoulli} can be taken over the class of $G$-invariant measures on $E$. Such measures are of the form
\[
\pp = \sum_{j \in J} q_j \uu_j,
\]
where $\qq = (q_1,q_2,q_3)$ is a probability vector on $J$, and $\uu_j$ denotes the normalized uniform measure on $E_j$, i.e. $\uu_j = \#(E_j)^{-1} \sum_{\aa\in E_j} \delta_\aa$. Equivalently, $\pp = \bfM\cdot\qq$, where $\bfM:\R^J \to \R^E$ is the linear operator such that $\bfM \cdot \ee_j = \uu_j$ for all $j\in J$. Note that for all $I \subset D$, by Lemma \ref{lemmaentropy} we have
\begin{equation}
\label{hIp}
\begin{split}
h_I(\bfM\cdot\qq) &= \sum_{j \in J} q_j h_I(\uu_j) + O(1)
= \sum_{j \in J} q_j \sum_{i\in I} \log(N_{i,j}) + O(1)\\
&= \sum_{j \in J} q_j \sum_{i\in I} k H_{i,j} + O(1)
= k \sum_{i\in I} \bfH_i \cdot \qq + O(1),\footnotemark
\end{split}
\end{equation}
and for all $i\in D$\Footnotetext{Here $O(1)$ denotes a quantity whose magnitude is bounded by a constant, in particular independent of $k$.}
\begin{align}
\label{chiip}
\chi_i(\bfM\cdot\qq) &= \sum_{j \in J} q_j \chi_i(\uu_j)
= \sum_{j \in J} q_j k X_{i,j}
= k \bfX_i\cdot\qq.
\end{align}
Here $\bfH_i$ and $\bfX_i$ denote the $i$th rows of $\bfH$ and $\bfX$, respectively, i.e. $\bfH_i = \ee_i^*\cdot\bfH$ and $\bfX_i = \ee_i^*\cdot\bfX$, where $(\ee_i^*)_{i\in D}$ is the dual of the standard basis of $\R^d$. So by \eqref{deltap2}, we have
\begin{equation}
\label{deltaSk}
\DynD(\Phi_k)
= \max_{\qq\in\Delta} \sum_{i\in D} \frac{k \bfH_i\cdot \qq + O(1)}{k \bfX_i\cdot\qq}
 \tendsto{k\to\infty} \delta_0 \df \max_{\qq\in\Delta} \sum_{i\in D} \frac{ \bfH_i\cdot \qq}{\bfX_i\cdot\qq},
\end{equation}
where $\Delta$ denotes the space of probability vectors on $J$.

{\it Estimation of $\HD(\Phi)$.} Fix a continuous map $\zz:\R\to\Delta$ of period $\rho > 0$, to be determined later. Fix $\gamma > 0$ small, and let $\ss:(0,\infty)\to \Delta$ be defined by the formula
\[
\ss_b = \zz(\log(b)/\gamma).
\]
Next, let $\rr_b = \bfM \cdot \ss_b$ for all $b > 0$. Note that $\rr$ is exponentially $\lambda$-periodic, where $\lambda = e^{\gamma\rho}$. We will estimate $\HD(\Phi)$ from below by estimating $\HD(\nu_{\rr})$.

Fix $t\in [0,\rho]$, and for each $i\in D$ let $B_i > 0$ be given by the formula
\begin{equation}
\label{Bidef2}
e^{\gamma t} = \bfX_i \cdot \bfS_{B_i},
\end{equation}
where $\bfS_B \df \int_0^B \ss_b\;\dee b$. Applying \eqref{chiip} with $\qq = \bfS_{B_i}$ shows that \eqref{Bidef} is satisfied with $B = k e^{\gamma t}$. It follows that
\[
\delta(\rr,k e^{\gamma t}) = \frac{1}{k e^{\gamma t}} \int h(\{i\in D : b \leq B_i\};\rr_b)\;\dee b.
\]
Now by \eqref{hIp},
\begin{align*}
h(\{i\in D :b\leq B_i\};\rr_b)
&= \sum_{i:b\leq B_i} k\bfH_i\cdot\ss_b + O(1),
\end{align*}
and since the left hand side is zero whenever $b > \max_i B_i$, we can add parentheses in the last expression:
\[
h(\{i\in D : b\leq B_i\};\rr_b) = \sum_{i:b\leq B_i} [k\bfH_i\cdot\ss_b + O(1)].
\]
So we have
\begin{align*}
\delta(\rr,k e^{\gamma t})
&= \frac{1}{k e^{\gamma t}} \int \sum_{i:b\leq B_i} [k\bfH_i\cdot\ss_b + O(1)] \;\dee b\\
&= \frac{1}{ke^{\gamma t}} \sum_{i\in D} \int_0^{B_i} [k\bfH_i\cdot\ss_b + O(1)] \;\dee b\\
&= \sum_{i\in D} \frac{k\bfH_i\cdot\bfS_{B_i} + O(B_i)}{k\bfX_i\cdot\bfS_{B_i}}\cdot \by{\eqref{Bidef2}}
\end{align*}
Since $B_i/\bfX_i\cdot\bfS_{B_i}$ is bounded independent of $k$, we have
\[
\delta(\rr,k e^{\gamma t}) \tendsto{k\to\infty} \delta(\gamma;t) \df \sum_{i\in D} \frac{\bfH_i\cdot \bfS_{B_i}}{\bfX_i\cdot \bfS_{B_i}},
\]
and the convergence is uniform with respect to $t$. So by Theorem \ref{theoremHDcompute}
\begin{equation}
\label{HDSk}
\HD(\Phi_k) \geq \delta(\rr) = \inf_{t\in [0,\rho]} \delta(\rr,k e^{\gamma t}) \tendsto{k\to\infty} \delta_\gamma \df \inf_{t\in [0,\rho]} \delta(\gamma;t).
\end{equation}
By \eqref{deltaSk} and \eqref{HDSk}, to complete the proof we must show that $\delta_\gamma > \delta_0$ if $\gamma$ is small enough. It suffices to show that
\begin{equation}
\label{ETSgamma}
\lim_{\gamma\to 0} \frac{\delta_\gamma - \delta_0}{\gamma} > 0.
\end{equation}
{\it Taking the limit $\gamma\to 0$.}
In the sequel, we will make the following assumptions about the matrices $\bfH$ and $\bfX$:
\begin{equation}
\label{maxloc}
\text{the maximum in \eqref{deltaSk} occurs at $\qq = \uu \df (1/3,1/3,1/3)$,}
\end{equation}
\begin{align}
\label{maxvalues}
\bfH_i\cdot\uu &= 2^{i - 1} \all i,&
\bfX_i\cdot\uu &= 2^i \all i.
\end{align}
We remark that it follows from these assumptions that $\delta_0 = 3/2$. We also assume that
\begin{equation}
\label{avgy}
\frac{1}{\rho}\int_0^\rho \zz(t) \;\dee t = \uu
\end{equation}
and that $\gamma = \log(2)/(\ell\rho)$ for some $\ell\in\N$. Before proceeding further, let us estimate $\bfS_{\exp(\gamma t)}^{(\gamma)}$. Here, we have notated the dependence of $\bfS$ on $\gamma$, since it is relevant to what follows. Let $\bfZ:\R\to\R^J$ be the unique antiderivative of $\zz - \uu$ such that $\int_0^\rho \bfZ(t)\;\dee t = 0$. Note that by \eqref{avgy}, $\bfZ$ is periodic of period $\rho$.

\begin{claim}
We have
\begin{equation}
\label{Sgammaz}
\bfS_{\exp(\gamma t)}^{(\gamma)} = e^{\gamma t} [\uu + \gamma \bfZ(t) + O(\gamma^2)]
\end{equation}
as $\gamma\to 0$.
\end{claim}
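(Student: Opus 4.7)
My plan is to prove the claim by a direct Laplace-type asymptotic expansion of the defining integral, using two successive integrations by parts together with the mean-zero periodicity of $\bfZ$. The substitution $b = e^{\gamma u}$ converts the definition of $\bfS_B$ into the convenient form
\[
\bfS_{\exp(\gamma t)}^{(\gamma)} = \int_0^{\exp(\gamma t)} \zz(\log(b)/\gamma)\,\dee b = \gamma \int_{-\infty}^t \zz(u)\, e^{\gamma u}\, \dee u,
\]
since $\zz$ is bounded and $e^{\gamma u}$ is integrable at $-\infty$. Splitting $\zz(u) = \uu + \bfZ'(u)$ separates the integrand into a constant-mean part whose contribution is exactly $\uu \cdot e^{\gamma t}$, and a mean-zero oscillatory part.

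An integration by parts on the oscillatory part yields
\[
\gamma \int_{-\infty}^t \bfZ'(u)\, e^{\gamma u}\, \dee u = \gamma \bfZ(t)\, e^{\gamma t} - \gamma^2 \int_{-\infty}^t \bfZ(u)\, e^{\gamma u}\, \dee u,
\]
the boundary term at $-\infty$ vanishing because $\bfZ$ is bounded while $e^{\gamma u} \to 0$. The first piece on the right is precisely the $\gamma \bfZ(t) e^{\gamma t}$ claimed. It remains to bound the remainder integral by $O(e^{\gamma t})$; once this is done, the $\gamma^2$ prefactor produces the desired $O(\gamma^2 e^{\gamma t})$ error.

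The control of this remainder is the one nontrivial point: naively bounding by $\|\bfZ\|_\infty \int_{-\infty}^t e^{\gamma u}\dee u$ yields only $O(e^{\gamma t}/\gamma)$, which would lose a factor of $\gamma$ and leave the error at $O(\gamma e^{\gamma t})$ rather than $O(\gamma^2 e^{\gamma t})$. To recover the missing power, I would exploit the fact that $\bfZ$ is itself periodic of period $\rho$ with mean $0$ (by the very choice of antiderivative in the paragraph preceding the claim), so that $\bfZ$ admits a periodic primitive $V$, which is therefore bounded on all of $\R$. A second integration by parts gives
\[
\int_{-\infty}^t \bfZ(u)\, e^{\gamma u}\, \dee u = V(t)\, e^{\gamma t} - \gamma \int_{-\infty}^t V(u)\, e^{\gamma u}\, \dee u,
\]
and each of these two terms is bounded by $\|V\|_\infty e^{\gamma t}$. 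Assembling everything yields $\bfS_{\exp(\gamma t)}^{(\gamma)} = \uu\, e^{\gamma t} + \gamma \bfZ(t) e^{\gamma t} + O(\gamma^2 e^{\gamma t})$, with the implied constant depending only on $\|\zz\|_\infty$ and $\rho$ and hence uniform in $t$. The main obstacle is precisely the second integration by parts: without the periodicity-and-mean-zero structure of $\bfZ$ built into the definition of the antiderivative, one could not gain the extra factor of $\gamma$, and the asymptotic expansion would be off by one order.
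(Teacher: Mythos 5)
Your proof is correct, and it takes a genuinely different route from the paper's. The paper never works with the full integral over $(0,e^{\gamma t}]$: instead it uses the exponential $e^{\gamma\rho}$-periodicity of $\ss$ to write the centered quantity as a telescoped single-period integral,
\[
\w\bfS_{\exp(\gamma t)}^{(\gamma)} = \frac{1}{e^{\gamma\rho}-1}\int_{e^{\gamma t}}^{e^{\gamma(t+\rho)}}\w\ss_b\,\dee b,
\]
then Taylor-expands in $\gamma$ and identifies the leading coefficient as $\frac{1}{\rho}\int_0^\rho s\,\w\zz(t+s)\,\dee s$, which it checks (via integration by parts and Fubini) coincides with the $\bfZ$ defined before the claim. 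You instead substitute $b=e^{\gamma u}$ to get $\gamma\int_{-\infty}^t\zz(u)e^{\gamma u}\,\dee u$ and run a standard Laplace-type expansion with two integrations by parts, using at each stage that a periodic function with zero mean over a period has a bounded (periodic) primitive. Both arguments lean on exactly the same two facts, \eqref{avgy} and the continuity/periodicity of $\zz$, and both give an error constant depending only on $\zz$ and $\rho$, hence uniform in $t$ as needed later. Your version is arguably more elementary and makes the mechanism for gaining each power of $\gamma$ explicit (you correctly flag that without the mean-zero structure of $\bfZ$ the second integration by parts would fail and the expansion would stall at $O(\gamma)$); the paper's version has the advantage of producing a closed-form single-period formula for $\bfZ(t)$ and of never introducing an improper integral. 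One small point worth making explicit if you write this up: the boundedness of the primitive $V$ of $\bfZ$ follows because $\bfZ$ is itself $\rho$-periodic with $\int_0^\rho\bfZ=0$, both of which are built into the paper's normalization of the antiderivative, so no additional hypotheses are needed.
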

\begin{subproof}
For convenience, we write $\w\zz(t) = \zz(t) - \uu$, $\w\ss_b^{(\gamma)} = \ss_b^{(\gamma)} - \uu$, and $\w\bfS_B^{(\gamma)} = \bfS_B^{(\gamma)} - B\uu$. Since $\ss$ is exponentially $e^{\gamma\rho}$-periodic, we have $\w\bfS_{\exp(\gamma(t + \rho))}^{(\gamma)} = e^{\gamma\rho}\w\bfS_{\exp(\gamma t)}^{(\gamma)}$, so
\begin{align*}
\w\bfS_{\exp(\gamma t)}^{(\gamma)}
&= \frac{\w\bfS_{\exp(\gamma(t + \rho))}^{(\gamma)} - \w\bfS_{\exp(\gamma t)}^{(\gamma)}}{e^{\gamma\rho} - 1}
= \frac{1}{e^{\gamma\rho} - 1} \int_{e^{\gamma t}}^{e^{\gamma(t + \rho)}} \w\ss_b\;\dee b\noreason\\
&= \frac{1}{\gamma\rho + O(\gamma^2)} \int_0^\rho \gamma e^{\gamma(t + s)} \w\zz(t + s)\;\dee s\\
&= \frac{1}{\rho + O(\gamma)} \int_0^\rho [e^{\gamma (t + s)} - e^{\gamma t}] \w\zz(t + s) \;\dee s \by{\eqref{avgy}}\\
&= e^{\gamma t} \left[\frac{\gamma}{\rho} \int_0^\rho s \w\zz(t + s)\;\dee s + O(\gamma^2)\right].
\end{align*}
Thus \eqref{Sgammaz} holds for the function
\begin{equation}
\label{Zdef2}
\bfZ(t) = \frac{1}{\rho} \int_0^\rho s \w\zz(t + s)\;\dee s.
\end{equation}
Integration by parts shows that $\bfZ'(t) = \w\zz(t)$, and Fubini's theorem shows that $\int_0^\rho \bfZ(t)\;\dee t = 0$, with both calculations using \eqref{avgy}. So the function $\bfZ$ defined by \eqref{Zdef2} is the same as the function $\bfZ$ defined earlier.
\end{subproof}

Let $t_i = \log(2^i B_i)/\gamma$. Then
\begin{align*}
e^{\gamma(t - t_i)}
&= e^{-\gamma t_i}\bfX_i\cdot\bfS_{B_i}^{(\gamma)} \by{\eqref{Bidef2}}\\
&= 2^{-i} e^{-\gamma t_i}\bfX_i\cdot\bfS_{\exp(\gamma t_i)}^{(\gamma)} \since{$\log(2) \in \N\gamma\rho$}\\
&= 2^{-i} \bfX_i \cdot\big(\uu + \gamma \bfZ(t_i) + O(\gamma^2)\big)\by{\eqref{Sgammaz}}\\
&= 1 + 2^{-i} \gamma \bfX_i\cdot\bfZ(t_i) + O(\gamma^2). \by{\eqref{maxvalues}}
\end{align*}
In particular $e^{\gamma(t - t_i)} = 1 + O(\gamma)$, which implies that $t - t_i = O(1)$ and thus we can use the Taylor expansion on the left-hand side:
\[
1 + \gamma (t - t_i) + O(\gamma^2) = 1 + \gamma \bfY_i\cdot\bfZ(t_i) + O(\gamma^2),
\]
where $\bfY_i = 2^{-i} \bfX_i$. Let us write $t_i = t_{i,\gamma}$ to remind ourselves that $t_i$ depends on $\gamma$. We have
\[
t = t_{i,\gamma} + \bfY_i\cdot\bfZ(t_{i,\gamma}) + O(\gamma).
\]
So if we let $t_{i,0}\in\R$ be the solution to the equation
\[
t = t_{i,0} + \bfY_i\cdot\bfZ(t_{i,0}),
\]
then $t_{i,\gamma} = t_{i,0} + O(\gamma)$. This is because the derivative of the right-hand side with respect to $t_{i,0}$ is bounded from below:
\[
1 + \bfY_i\cdot\bfZ'(t_{i,0})
= \bfY_i\cdot\uu + \bfY_i\cdot\w\zz(t_{i,0})
= \bfY_i\cdot\zz(t_{i,0}) \geq \min_{\pp\in\Delta} \bfY_i\cdot\pp = \min_{j\in J} Y_{i,j} > 0.
\]
Next, let $\bfK_i = 2^{-i}(\bfH_i - (1/2)\bfX_i)$. Then
\begin{align*}
\frac{\delta(\gamma;t) - \delta_0}{\gamma} &= \frac{1}{\gamma}\left[\sum_{i\in D} \frac{\bfH_i\cdot\bfS_{B_i}^{(\gamma)}}{\bfX_i\cdot \bfS_{B_i}^{(\gamma)}} - \frac{3}{2}\right]\\
&= \frac{1}{\gamma}\sum_{i\in D} \frac{\bfK_i\cdot\bfS_{B_i}^{(\gamma)}}{\bfY_i\cdot \bfS_{B_i}^{(\gamma)}}\\
&= \frac{1}{\gamma}\sum_{i\in D} \frac{\bfK_i\cdot\bfS_{2^i B_i}^{(\gamma)}}{\bfY_i\cdot \bfS_{2^i B_i}^{(\gamma)}} \since{$\log(2) \in \N\gamma\rho$}\\
&= \frac{1}{\gamma}\sum_{i\in D} \frac{\bfK_i\cdot\big(\uu + \gamma\bfZ(t_{i,\gamma}) + O(\gamma^2)\big)}{\bfY_i\cdot \big(\uu + \gamma\bfZ(t_{i,\gamma}) + O(\gamma^2)\big)} \by{\eqref{Sgammaz}}\\
&= \frac{1}{\gamma}\sum_{i\in D} \frac{\gamma\bfK_i\cdot\bfZ(t_{i,\gamma}) + O(\gamma^2)}{1 + O(\gamma)} \by{\eqref{maxvalues}}\\
&= \sum_{i\in D} \bfK_i\cdot\bfZ(t_{i,0}) + O(\gamma)
\end{align*}
and thus
\[
\lim_{\gamma\to 0}\frac{\delta(\gamma;t) - \delta_0}{\gamma} = \gammacoeff(t) \df \sum_{i\in D} \bfK_i\cdot\bfZ(t_{i,0}),
\]
and the convergence is uniform with respect to $t$. So to complete the proof, we must show that there exist matrices $\bfH$ and $\bfX$ satisfying \eqref{basicassumptions}, \eqref{maxloc}, and \eqref{maxvalues}, such that for some periodic function $\zz:\R\to\Delta$ satisfying \eqref{avgy}, we have
\begin{equation}
\label{infq}
\inf_{t\in [0,\rho]} \gammacoeff(t) > 0.
\end{equation}
{\it Constructing the matrices $\bfH$ and $\bfX$; letting $\epsilon \to 0$.}
To construct these matrices, let $\bfU$ be the $3\times 3$ matrix whose entries are all equal to 1, and fix $\epsilon > 0$ small to be determined. We will let
\begin{align*}
\bfH_i &= 2^i \bfK_i + (1/2) \bfX_i,&
\bfX_i &= 2^i \bfY_i,&
\bfK &= \epsilon\w\bfK,&
\bfY &= \bfU + \epsilon\w\bfY,
\end{align*}
where $\w\bfK$ and $\w\bfY$ will be chosen later, with the property that
\begin{equation}
\label{avgzero}
\w\bfK\cdot\uu = \w\bfY\cdot\uu = \0.
\end{equation}
Then \eqref{maxvalues} is easily verified, and if $\epsilon$ is small enough then \eqref{basicassumptions} holds. Now for $\qq\in\Delta$, we have
\begin{equation}
\label{dimdifference}
\begin{split}
\sum_{i\in D} \frac{\bfH_i\cdot\qq}{\bfX_i\cdot\qq} - \frac{3}{2}
&= \sum_{i\in D} \frac{\bfK_i\cdot\qq}{\bfY_i\cdot\qq}
= \sum_{i\in D} \frac{\epsilon\w\bfK_i\cdot\qq}{1 + \epsilon\w\bfY_i\cdot\qq}\\
&= \sum_{i\in D} \left[\epsilon\w\bfK_i\cdot\qq - \epsilon^2 (\w\bfK_i\cdot\qq)(\w\bfY_i\cdot\qq)\right] + O(\epsilon^3 \cdot \|\qq - \uu\|^3).
\end{split}
\end{equation}
To demonstrate that \eqref{maxloc} holds, we need to show that \eqref{dimdifference} is non-positive for all $\qq\in\Delta$. To show that this is true whenever $\epsilon$ is sufficiently small, it suffices to show that
\begin{equation}
\label{avgKzero}
\sum_{i\in D} \w\bfK_i = \0
\end{equation}
and
\begin{equation}
\label{comm}
\sum_{i\in D} (\w\bfK_i\cdot\qq)(\w\bfY_i\cdot\qq) > 0 \all \qq\in \R^J\butnot\R\uu.
\end{equation}
Finally, to show that \eqref{infq} holds whenever $\epsilon$ is sufficiently small, we introduce subscripts to indicate the dependence on $\epsilon$ of all quantities that depend on $\epsilon$. We have
\[
t = t_{i,\epsilon} + \bfY_{i,\epsilon}\cdot \bfZ(t_{i,\epsilon}) = t_{i,\epsilon} + \bfU_i\cdot\bfZ(t_{i,\epsilon}) + \epsilon\w\bfY_i\cdot \bfZ(t_{i,\epsilon}).
\]
The middle term is zero, since $\bfU\cdot\w\zz(t) = \bfU\cdot\zz(t) - \bfU\cdot\uu = (1,1,1) - (1,1,1) = \0$ for all $t\in\R$. Thus
\[
t = t_{i,\epsilon} + \epsilon\w\bfY_i\cdot \bfZ(t_{i,\epsilon}).
\]
So in particular, $t_{i,\epsilon} = t + O(\epsilon)$, and thus
\[
t = t_{i,\epsilon} + \epsilon\w\bfY_i\cdot\bfZ(t) + O(\epsilon^2).
\]
Thus
\begin{align*}
\gammacoeff_\epsilon(t) &= \sum_{i\in D} \bfK_i\cdot\bfZ\big(t - \epsilon\w\bfY_i\cdot\bfZ(t) + O(\epsilon^2)\big)\\
&= \sum_{i\in D} \epsilon\w\bfK_i\cdot\left[\bfZ\big(t - \epsilon\w\bfY_i\cdot\bfZ(t) + O(\epsilon^2)\big) - \bfZ(t)\right] \by{\eqref{avgKzero}}\\
&= \sum_{i\in D} \big(\epsilon\w\bfK_i\cdot\bfZ'(t)\big)\big(-\epsilon\w\bfY_i\cdot\bfZ(t)\big) + O(\epsilon^3).
\end{align*}
(Note that in this step, we use the fact that $\zz$ is continuous (and thus $\bfZ$ is $C^1$); it is not enough for $\zz$ to be piecewise continuous.) So it is enough to show that
\begin{equation}
\label{noncomm}
\sum_{i\in D} \big(\w\bfK_i\cdot\bfZ'(t)\big)\big(\w\bfY_i\cdot\bfZ(t)\big) < 0 \all t\in [0,\rho].
\end{equation}
{\it Constructing $\w \bfK$, $\w \bfY$, and $\zz$.}
Until now, we have not used the fact that $d = 3$, nor the fact that $\#(D)$ and $\#(J)$ are equal, except as a convenience of notation. But now, we construct explicit matrices $\w\bfK$ and $\w\bfY$ and an explicit continuous periodic function $\zz:\R\to\Delta$ that satisfy \eqref{avgy}, \eqref{avgzero}, \eqref{avgKzero}, \eqref{comm}, and \eqref{noncomm}:
\begin{align*}
\w\bfK &= \left[\begin{array}{ccc}
1 && -1\\
-1 & 1 &\\
& -1 & 1
\end{array}\right],\hspace{.5 in}
\w\bfY = \left[\begin{array}{ccc}
1 & -1 &\\
& 1 & -1\\
-1 && 1
\end{array}\right],\\
\zz(t) &= \frac{1}{3}\left(1 + \cos(t),1 + \cos\left(t + \frac{2\pi}{3}\right),1 + \cos\left(t + \frac{4\pi}{3}\right)\right),\\
\bfZ(t) &= \frac{1}{3}\left(\sin(t),\sin\left(t + \frac{2\pi}{3}\right),\sin\left(t + \frac{4\pi}{3}\right)\right).
\end{align*}
Now \eqref{avgy}, \eqref{avgzero}, and \eqref{avgKzero} are immediate. Although it is possible to verify \eqref{comm} and \eqref{noncomm} by direct computation, we give a geometrical proof. First note that $\w\bfK$ and $\w\bfY$ both commute with the group $G$ of orientation-preserving permutation matrices. It follows that the quadratic form $Q_1(\qq) = \sum_{i\in D} (\w\bfK_i\cdot\qq)(\w\bfY_i\cdot\qq)$ is invariant under $G$, and thus the conic section $\{\qq \in P: Q_1(\qq) = \pm 1\}$ is also invariant under $G$, where $P$ is the plane through the origin parallel to $\Delta$, i.e. $P = \{(q_1,q_2,q_3) \in \R^3 : q_1 + q_2 + q_3 = 0\}$. Now if this conic section is a non-circular ellipse, then its major axis must be fixed by $G$, and if it is a hyperbola, then the asymptotes must be either fixed or interchanged. All of these scenarios are impossible because $G$ is of order 3 and has no fixed lines in $P$, so the conic section is a circle and thus $Q_1(\qq) = c_1 \|\qq\|^2$ for some constant $c_1$. The sign of $c_1$ can be calculated by taking the trace of $Q_1$, i.e. $3c_1 = \sum_{i\in D} \lb \w\bfK_i,\w\bfY_i\rb = 3$. Geometrically, this formula is a consequence of the fact that the angle between $\w\bfK_i$ and $\w\bfY_i$ is 60 degrees, and their magnitudes are both $\sqrt 2$. This demonstrates \eqref{comm}.

Next, observe that the path traced by $\bfZ$ is a circle in $P$ centered at the origin, with the opposite orientation from the triangular path $\ee_1\to\ee_2\to\ee_3\to\ee_1$.\Footnote{Although we have checked that the signs and orientations in this paragraph are correct (and we thank the referee for pointing out a couple of errors in a previous version), it is not necessary to check this to verify the validity of the argument; cf. Remark \ref{remarksignsirrelevant}.} Thus, for all $t\in\R$ we have $\bfZ'(t) = \vv\times \bfZ(t)$, where $\times$ denotes the cross product and $\vv = -\sqrt 3\uu = -\frac{\sqrt 3}{3}(1,1,1)$ is a unit vector. So if $\bfN$ denotes the $3\times 3$ matrix such that $\bfN\cdot\xx = \vv\times\xx$ for all $\xx\in\R^3$, then the left-hand side of \eqref{noncomm} is equal to
\[
\sum_{i\in D} \big(\w\bfK_i \cdot\bfN\cdot \bfZ(t)\big) \big(\w\bfY_i\cdot \bfZ(t)\big)
\]
and so what is needed is to show that the quadratic form
\[
Q_2(\qq) = \sum_{i\in D} \big(\w\bfK_i \cdot\bfN\cdot \qq\big) \big(\w\bfY_i\cdot \qq\big)
\]
is negative definite on $P$. Now since $\bfN$ is a rotation of the plane $P$, it commutes with $G$, so the argument of the preceding paragraph can be used to show that $Q_2(\qq) = c_2 \|\qq\|^2$ for some constant $c_2$ whose sign is the same as the sign of the trace of $Q_2$, i.e. $3c_2 = \sum_{i\in D} \lb \w\bfK_i\cdot\bfN,\w\bfY_i\rb = -3\sqrt 3$. Geometrically, this formula is a consequence of the fact that the angle between $\w\bfK_i\cdot\bfN$ and $\w\bfY_i$ is 150 degrees, and their magnitudes are both $\sqrt 2$. This demonstrates \eqref{noncomm}.
\end{proof}

\begin{remark}
It is not hard to see why it is impossible to construct matrices $\w\bfK$ and $\w\bfY$ as well as a periodic function $\zz$ satisfying the relevant formulas unless $\#(D),\#(J) \geq 3$. Indeed. if $\#(J) \leq 2$, then $\Delta$ is a one-dimensional space, and so by the intermediate value theorem we have $\zz(t) = 0$ for some $t$, rendering \eqref{noncomm} impossible. Similarly, if $\#(D) \leq 2$, then by \eqref{avgKzero} we have $\w\bfK_2 = -\w\bfK_1$, and again by the intermediate value theorem we have $\w\bfK_1\cdot\zz(t) = 0$ for some $t$. Thus again, \eqref{noncomm} is impossible in this case.
\end{remark}

\begin{remark}
\label{remarksignsirrelevant}
It should be pointed out that the directions of the inequalities \eqref{comm} and \eqref{noncomm} are irrelevant to the question of whether there exist $\w\bfK$, $\w\bfY$, and $\zz$ satisfying them. Indeed, if $\w\bfK$ (or $\w\bfY$) is replaced by its negative, then the signs of both inequalities simultaneously flip, while if $\zz$ is replaced by the function $t\mapsto \zz(-t)$, then the sign of \eqref{noncomm} flips but the sign of \eqref{comm} stays the same. So given a triple $(\w\bfK,\w\bfY,\zz)$ that satisfies \eqref{comm} and \eqref{noncomm} with respect to any given direction of signs, it is possible to modify this triple in a minor way to get a triple that satisfies \eqref{comm} and \eqref{noncomm} with respect to the correct direction of signs.
\end{remark}

\section{Open questions}
\label{sectionopen}

Although Theorem \ref{theoremcounterexample} provides an answer to Question \ref{mainquestion} in dimensions 3 and higher, it is natural to ask what happens in dimension 2:

\begin{questions}
If $X \subset \R^2$ is a compact set and $T:X\to X$ is an expanding map satisfying the specification property, then is the Hausdorff dimension of $X$ equal to the supremum of the Hausdorff dimensions of the ergodic $T$-invariant measures? And if so, is the supremum attained, and what are the properties of the measure attaining the supremum? What if the specification property is not assumed?
\end{questions}

Although we have proven that the dimension gap $\HD(\Phi) - \DynD(\Phi)$ is strictly positive, we cannot get a very good lower bound on its size. This leads to some natural questions:

\begin{questions}
\label{questionMDG}
Given $d \geq 3$, what is
\[
\text{MDG}(d) \df \sup_{\Lambda_\Phi \subset [0,1]^d} \big(\HD(\Phi) - \DynD(\Phi)\big),
\]
where the supremum is taken over all Bara\'nski sponges $\Lambda_\Phi$? (Here MDG is short for ``maximal dimension gap''.) Is the answer any different if the supremum is restricted to sponges that satisfy the coordinate ordering condition? And what about the related quantity
\[
\text{MDG}'(d) \df \sup_{\Lambda_\Phi \subset [0,1]^d} \frac{\HD(\Phi) - \DynD(\Phi)}{\DynD(\Phi)}?
\]
In our proofs it seems that this quantity is more natural to consider than $\text{MDG}(d)$; for example, we can show that $\text{MDG}'(d) \leq d - 2$ for all $d \geq 2$ (Theorem \ref{theoremDGbound} above). To avoid the effects of low dimension, we ask: what is the asymptotic behavior of $\text{MDG}'(d)$ as $d \to \infty$? For example, is it bounded or unbounded?
\end{questions}

Although Theorem \ref{theoremcontinuous} shows that the map $\Phi\mapsto \HD(\Phi)$ is continuous on the space of Bara\'nski sponges, in many contexts the Hausdorff dimension is not only continuous but real-analytic (see e.g. \cite{AndersonRocha, Pollicott, RoyUrbanski, Ruelle, Rugh}). So we ask:

\begin{questions}
Is the function $\Phi\mapsto \HD(\Phi)$ real-analytic, or at least piecewise real-analytic, on the space of Bara\'nski sponges? What about the subclass of strongly Bara\'nski sponges?
\end{questions}

Finally, we speculate that the key ideas behind our definition of a pseudo-Bernoulli measure might apply more generally. We therefore ask the following questions:

\begin{questions}
Is there any useful class of measures that exhibits scale-dependent behavior similar to pseudo-Bernoulli measures in a more general context? For example, can the ideas of this paper be used to construct repellers with a dimension gap other than sponges?
\end{questions}

\bibliographystyle{amsplain}

\bibliography{bibliography}

\end{document}